\newtheorem{thm}{Theorem}[section]
\newtheorem{prop}[thm]{Proposition}
\newtheorem{lem}[thm]{Lemma}
\newtheorem{cor}[thm]{Corollary}
\theoremstyle{definition}
\newtheorem{definition}[thm]{Definition}
\newtheorem{example}[thm]{Example}
\title{}
\date{}
\author{}
\newcommand{\ais}[3]{{#3}_{#1}^{\scriptscriptstyle \leqslant {#2}}}
\newcommand{\cais}[3]{{#3}_{#1}^{\scriptscriptstyle \geqslant {#2}}}
\newcommand{\ctr}[2]{{\tau}_{#1}^{\scriptscriptstyle \leqslant {#2}}}
\newcommand{\cotr}[2]{{\tau}_{#1}^{\scriptscriptstyle \geqslant {#2}}}
\newcommand{\coli}{\text{colim}}
\newcommand{\ho}{\text{Hom}}
\newcommand{\Db}{\text{D}^{ b}}
\newcommand{\Perf}{\text{Perf}}
\newcommand{\D}{\text{D}}
\newcommand{\Spec}{\text{Spec}R}
\newcommand{\overbar}[1]{\mkern 1.5mu\overline{\mkern-1.5mu#1\mkern-1.5mu}\mkern 1.5mu}
\newcommand{\K}{\text{K}}
\newcommand{\U}{\mathscr{U}}
\newcommand{\V}{\mathscr{V}}
\newcommand{\C}{\mathscr{C}}
\begin{document}

\title[t-structures on Perfect complexes]{Bounded t-structures on the category of Perfect complexes over a Noetherian ring of finite Krull dimension}

\author{Harry Smith}

\date{October \\ 2019}

\maketitle

\begin{abstract}
We classify bounded t-structures on the category of perfect complexes over a commutative, Noetherian ring of finite Krull dimension, 
extending a result of Alonso Tarrio, Jeremias Lopez and Saorin which 
covers the regular case. In particular,
we show that there are no bounded t-structures in the singular case, verifying the affine version of a conjecture of Antieau, Gepner and Heller, and also that 
there are no non-trivial t-structures at all in the singular, irreducible case.
\end{abstract}

\tableofcontents

\section{Introduction}
Beilinson, Bernstein and Deligne introduced t-structures in order to construct the category of perverse sheaves over an algebraic or analytic variety \cite{bbd}. 
A t-structure on a triangulated category (or a stable
$\infty$-category) consists of two full 
subcategories, called the aisle and coaisle, satisfying axioms which abstract the relationship between the connective and coconnective parts of a category of complexes. 
It follows from the axioms that each t-structure comes equipped with a natural
 cohomological functor from the original triangulated category to some abelian subcategory, called the heart.

There are various results using information of a geometric nature to characterise t-structures on derived categories. 
In \cite{neem1}, Neeman shows that the smashing localizing subcategories of the unbounded derived category of a commutative ring correspond to specialization closed subsets of that
ring's Zariski spectrum. In \cite{kash1} Kashiwara uses a certain filtration of specialization closed subsets, 
to construct a t-structure on the bounded derived category of coherent sheaves on a complex manifold. In \cite{atjls}, Alonso Tarrio, Jeremias Lopez, and Saorin give a 
full classification for compactly generated 
t-structures on the unbounded derived category of a Noetherian ring, showing that they are in bijection with filtrations of specialization closed subsets. 
Furthermore, for a large class of Noetherian rings, they identify those t-structures restricting to the bounded derived category with filtrations satisfying a condition called the 
weak cousin condition. In \cite{hrbek}, Hrbek extends the classification on the unbounded derived category to arbitrary commutative rings by replacing specialization closed subsets with
Thomason subsets. These classifications are discussed in sections 3 and 4.

The t-structures referred to above are compactly generated, meaning that they can each be characterised by a collection of compact objects.
The compact objects in the derived category of a commutative ring are exactly the perfect complexes \cite{stacks1}, that is the complexes quasi-isomorphic to a bounded complex of finitely generated projectives. 
These form a triangulated subcategory, denoted $\Perf(R)$. 

To a commutative ring we may assign its algebraic $K$-theory spectrum $\K(R)$, from which its algebraic $K$-groups are computed. This spectrum may be realized as the image of $\Perf(R)$,
under the $K$-theory functor for small, idempotent complete, stable $\infty$-categories $\K: \text{Cat}_{\infty}^{\text{perf}} \to \text{Sp}$, see \cite{bgt1}. Barwick's theorem of the heart \cite{clarwick1} connects 
the connective $K$-theory of a small, stable $\infty$-category with the existence of bounded t-structures on that category. Furthermore in \cite{agh1}, Antieau, Gepner and Heller
show that when the heart of this t-structure is Noetherian, the theorem of the heart also holds for connective $K$-theory. Specifically they prove the following theorem.

\begin{thm}
\label{aghtheorem}
Let $\mathscr{C}$ be a small, stable $\infty$-category with a bounded t-structure.  Then $\emph{K}_{-1}(\mathscr{C}) = 0$ and, if $\mathscr{C}^\heartsuit$ is also Noetherian,
then $\emph{K}_{-n}(\mathscr{C}) = 0$ for $n \geqslant 1$.
\end{thm}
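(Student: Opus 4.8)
The plan is to split the statement into its connective part, which is handled by Barwick's theorem of the heart, and its negative part, where all the content lies and which that theorem does not see directly. Write $\mathcal{A} = \mathscr{C}^{\heartsuit}$. By Barwick's theorem of the heart the heart $\mathcal{A}$, with its natural exact structure, has $\K$-theory mapping by an equivalence of \emph{connective} spectra to that of $\mathscr{C}$, so $\K_n(\mathscr{C})\cong\K_n(\mathcal{A})$ for all $n\geqslant 0$; this gives no information on $\K_{-n}$. To reach the negative groups I would pass to the non-connective $\K$-theory spectrum $\mathbb{K}$ --- a localizing invariant agreeing with $\K$ on non-negative homotopy --- whose negative homotopy is computed by Bass-type delooping: $\mathbb{K}(\mathscr{C})$ is assembled from the $\K$-theories of the twists $\mathscr{C}[t^{\ast}] := \mathscr{C}\otimes_{\Perf(\mathbb{Z})}\Perf(\mathbb{Z}[t^{\ast}])$ for $t^{\ast}\in\{t,t^{-1},t^{\pm 1}\}$, so that $\K_{-n}(\mathscr{C}) = \pi_{-n}\mathbb{K}(\mathscr{C})$ is determined, inductively in $n$, by $\K_0$ and lower $\K$-groups of these twisted categories. (Equivalently one may use Schlichting's suspension $\mathcal{S}$, for which $\mathbb{K}(\mathcal{S}\mathscr{C})\simeq\Sigma\mathbb{K}(\mathscr{C})$ and hence $\K_{-n}(\mathscr{C})\cong\K_0(\mathcal{S}^{n}\mathscr{C})$.)

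The second step is to push everything down to the heart. The key point is that the delooping operations preserve boundedness of t-structures: since $\mathbb{Z}$ has global dimension one, the t-structure on $\mathscr{C}$ and the standard t-structure on perfect complexes over $\mathbb{Z}[t^{\ast}]$ combine, with bounded amplitude, into a bounded t-structure on $\mathscr{C}[t^{\ast}]$ whose heart is the abelian category of finitely presented $\mathbb{Z}[t^{\ast}]$-module objects in $\mathcal{A}$. Moreover, for \emph{any} stable $\infty$-category with a bounded t-structure the Euler-characteristic identification gives an isomorphism $\K_0(\text{heart})\cong\K_0$, and this needs no Noetherian hypothesis. Feeding these two facts into the delooping identities and comparing with the same identities for $\Db(\mathcal{A})$ --- which carries a bounded t-structure with the same heart, hence the same twisted hearts --- one obtains, by induction on $n$, an isomorphism $\K_{-n}(\mathscr{C})\cong\K_{-n}(\Db(\mathcal{A})) = \K_{-n}(\mathcal{A})$ for all $n\geqslant 1$. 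It then remains to invoke Schlichting's computations of negative $\K$-theory of abelian categories: $\K_{-1}(\mathcal{A}) = 0$ for an arbitrary essentially small abelian category, and $\K_{-n}(\mathcal{A}) = 0$ for every $n\geqslant 1$ when $\mathcal{A}$ is Noetherian. The first gives $\K_{-1}(\mathscr{C}) = 0$ unconditionally; the second gives $\K_{-n}(\mathscr{C}) = 0$ for $n\geqslant 1$ in the Noetherian case.

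The main obstacle is exactly this reduction to the heart in the non-connective range, since Barwick's theorem is available only for connective $\K$-theory. Two things need care. First, one must verify that tensoring a bounded t-structure with $\Perf(\mathbb{Z}[t^{\ast}])$ genuinely yields a bounded t-structure with the expected heart of finitely presented module objects; here one exploits that only boundedness --- not standardness or Noetherianity --- of the t-structure on $\mathscr{C}$ is needed, together with control of amplitudes via $\operatorname{gl.dim}(\mathbb{Z}) = 1$. Second, one must arrange the delooping long exact sequences so that the elementary identification $\K_0(\text{heart})\cong\K_0$ actually pins the negative groups down rather than leaving them inside a kernel; this is where the induction on $n$ is essential, and where, in the Noetherian case, one ultimately leans on Schlichting's Ind-completion and Eilenberg-swindle argument. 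The rest is a formal assembly of Barwick's theorem, the non-connective delooping formalism, and Schlichting's vanishing theorems.
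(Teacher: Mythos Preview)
The paper does not prove this theorem. Theorem~\ref{aghtheorem} is quoted in the introduction as a result of Antieau, Gepner and Heller \cite{agh1}; the paper merely cites it for motivation and then proceeds to prove its own results (Theorems~1.2 and~1.3) by entirely different, commutative-algebraic methods that make no use of $\K$-theory at all. So there is nothing in the paper to compare your proposal against.

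That said, your sketch is broadly in the spirit of the actual argument in \cite{agh1}: the key move there is indeed to extend the theorem of the heart from connective to non-connective $\K$-theory, reducing to Schlichting's vanishing results for negative $\K$-theory of abelian categories (unconditional for $\K_{-1}$, and for all $\K_{-n}$ under the Noetherian hypothesis). The delicate step you flag --- that the delooping operations interact well with bounded t-structures so that one can compare $\mathbb{K}(\mathscr{C})$ with $\mathbb{K}(\Db(\mathcal{A}))$ --- is exactly where the work in \cite{agh1} lies, though their formulation is somewhat different from the Bass-twist picture you outline. If you want a genuine proof you should consult \cite{agh1} directly; the present paper is not the place to look.
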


As a consequence of this, we see that non-trivial negative $K$-groups can give an obstruction to the existence of bounded t-structures on $\Perf(R)$. 
We know from \cite[2.7.8]{weib2} that negative $K$-groups vanish over a commutative, regular, Noetherian ring, so this observation only applies in the singular case. 
We prove the following theorem, which is the affine case of a conjecture by Antieau, Gepner and Heller \cite{agh1}.

\begin{thm}
Let $R$ be a singular, Noetherian ring of finite Krull dimension. Then $\emph{Perf}(R)$ admits no bounded t-structure.
\end{thm}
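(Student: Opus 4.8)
The plan is to turn a hypothetical bounded t-structure on $\Perf(R)$ into a compactly generated t-structure on the unbounded derived category $\D(R)$, read off from it (via the classification of Sections 3--4) a filtration of $\mathrm{Spec}\,R$ by specialization closed subsets, and then argue that boundedness, together with the demand that truncation preserve perfect complexes, is rigid enough to force every residue field of $R$ to have finite projective dimension. By the Auslander--Buchsbaum--Serre criterion this makes $R$ regular, contradicting the hypothesis.

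In detail: suppose $(\U,\V)$ is a bounded t-structure on $\Perf(R)$ with heart $\mathscr{H}$. Since $\U$ is an aisle generated by a set of compact objects of $\D(R)$, the aisle $\widetilde{\U}$ it generates there is compactly generated, hence by Alonso Tarrio, Jeremias Lopez and Saorin (Section 3) corresponds to a filtration $\Phi\colon \mathbb{Z}\to\{\text{specialization closed subsets of }\mathrm{Spec}\,R\}$. The first point to check is that the associated t-structure $(\widetilde{\U},\widetilde{\U}^{\perp})$ on $\D(R)$ restricts to $(\U,\V)$ on $\Perf(R)$: the inclusion $\U\subseteq\widetilde{\U}\cap\Perf(R)$ is clear, and conversely, if $P\in\Perf(R)\cap\widetilde{\U}$ then its $(\U,\V)$-truncation triangle exhibits the coconnective part of $P$ as a cofiber of a map in $\widetilde{\U}$, hence as an object of $\widetilde{\U}$; but it also lies in $\widetilde{\U}^{\perp}$, since $\V$ is the right orthogonal of $\U$ inside $\Perf(R)$ and $\U$ generates $\widetilde{\U}$, so it lies in $\widetilde{\U}\cap\widetilde{\U}^{\perp}=0$ and $P\in\U$. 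The same device handles the coaisle, and one obtains $\mathscr{H}=\widetilde{\mathscr{H}}\cap\Perf(R)$, where $\widetilde{\mathscr{H}}$ is the heart of the $\Phi$-t-structure on $\D(R)$.

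Next I would extract the consequences of the hypotheses. Boundedness of $(\U,\V)$, applied in particular to the object $R$, forces $\Phi$ to be eventually equal to $\mathrm{Spec}\,R$ and eventually empty, and forces $R$ to lie in a bounded range of the $\Phi$-t-structure; and, since $\mathscr{H}$ generates $\Perf(R)$ and $\Perf(R)\subseteq\D^{b}(R)$, one checks that $\Phi$ is moreover constrained by the weak Cousin condition of Section 4. Now comes the step I expect to be the \emph{main obstacle}: showing that no filtration which is eventually full, eventually empty, and satisfies the weak Cousin condition can have the property that $\Phi$-truncation carries $\Perf(R)$ into itself, unless $R$ is regular. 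The mechanism I would pursue is local and homological. Fix a maximal ideal $\mathfrak{m}$; by the weak Cousin condition the filtration descends to $\mathfrak{m}$ in finitely many steps as it passes from $\mathrm{Spec}\,R$ down to $\emptyset$, so applying the $\Phi$-truncation functors --- which are assembled from the torsion functors $R\Gamma_{\Phi(i)}$ --- to a perfect complex with $R/\mathfrak{m}$ among its cohomology (for instance the Koszul complex on a generating set of $\mathfrak{m}$) will, the key computation shows, isolate after finitely many truncations a shift of $R/\mathfrak{m}$ or of one of its syzygies as a direct summand of a perfect complex. Since summands and syzygies of perfect complexes are perfect, this yields $\mathrm{pd}_{R}R/\mathfrak{m}<\infty$; as this holds for every maximal ideal, $R$ has finite global dimension, i.e.\ $R$ is regular, contradicting singularity. (On the regular side this is consistent, since there the truncations do stay perfect, which is precisely why Alonso Tarrio, Jeremias Lopez and Saorin obtain a rich family of bounded t-structures; the irreducible case mentioned in the abstract requires an additional, separate refinement.)

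Two remarks. The finite Krull dimension hypothesis enters both in making the classifications of Sections 3--4 available and, via Weibel's vanishing $K_{-n}(R)=0$ for $n>\dim R$, in bounding the number of truncation steps; it is also what aligns the statement with the conjecture of Antieau, Gepner and Heller, since Theorem \ref{aghtheorem} by itself would rule out a bounded t-structure only when some negative $K$-group of $R$ is nonzero. And one should not expect a purely $K$-theoretic proof: rings such as $k[\varepsilon]/(\varepsilon^{2})$ are singular yet have all negative $K$-groups vanishing, so the obstruction to a bounded t-structure must be produced at the level of the filtration, as above.
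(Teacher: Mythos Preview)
Your overall architecture matches the paper's: extend the t-structure to $\D(R)$, read off a Thomason filtration $\Phi=\{Z^i\}$, show it terminates above at $\Spec$ and below at $\emptyset$, and then use the Koszul complex on a singular maximal ideal to exhibit the residue field as a non-perfect truncation. The endgame is exactly the paper's Lemma~\ref{doesnotrestrict}.

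The genuine gap is the step you treat as routine: ``boundedness \dots\ forces $\Phi$ to be \dots\ eventually empty.'' Boundedness applied to $R$ gives $R\in\ais{}{n}{\U}$ for some $n$, hence $Z^n=\Spec$; but $R\in\cais{}{m}{\V}$ does not by itself force any $Z^i$ to be empty. This is precisely where the paper spends its effort. Its mechanism is a \emph{second}, entirely separate obstruction (Proposition~\ref{summand} together with Corollary~\ref{infg}): if the filtration has constant positive height $h$ on an interval of length $\dim R$, then a suitable $\Phi$-truncation of $R$ has, after localizing at a minimal prime of that stratum, the local cohomology module $\text{H}^h_{\mathfrak{p}R_\mathfrak{p}}(R_\mathfrak{p})$ as a direct summand of its cohomology; this is infinitely generated, so the truncation cannot be perfect. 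Finite Krull dimension then forces the heights to reach $0$ or $\infty$ in finitely many steps; the height-$0$ case is ruled out by boundedness applied to a Koszul complex supported in the (necessarily nonempty) intersection $\bigcap_i Z^i$. None of this appears in your sketch, and your suggestion that finite Krull dimension enters ``via Weibel's vanishing $K_{-n}(R)=0$'' is off the mark: it enters exactly here, to bound the number of height-changes.

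Two smaller points. Your appeal to the weak Cousin condition is not justified: Proposition~\ref{weakcousin} is proved for t-structures on $\Db(R)$, and a t-structure on $\Perf(R)$ need not extend to $\Db(R)$ when $R$ is singular (that is the whole issue). Fortunately the paper's argument never uses weak Cousin. And in the Koszul step, what actually happens (after localizing at the singular $\mathfrak{m}$ and shifting so that $Z^0$ is the last nonempty term) is that the $\Phi$-truncation of $K(\overbar{m})$ \emph{coincides} with the standard truncation and hence equals $k$ exactly --- not merely that $k$ or a syzygy appears as a summand.
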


This yields a classification for bounded t-structures on $\Perf(R)$ for $R$ commutative, Noetherian and with finite Krull dimension.
The regular case follows from results in \cite{atjls} (see proposition \ref{boundedness}). Furthermore, we prove the following result about arbitrary t-structures.

\begin{thm}
Let $R$ be a singular, irreducible, Noetherian ring of finite Krull dimension $d$, and let $(\U, \V)$ be a t-structure on $\emph{Perf}(R)$. 
Then either $\U = 0$ or $\U = \emph{Perf}(R)$.
\end{thm}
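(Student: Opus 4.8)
The plan is to reduce to a t-structure that is bounded above, extend it to a compactly generated t-structure on $\D(R)$, and then use the classification recalled in Section~3 together with the singularity of $R$ to force the defining filtration to be trivial.

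Assume $\U\neq 0$. First I would check that $\U^{\infty}:=\bigcup_{n\in\mathbb{Z}}\U[n]$ and $\V^{\infty}:=\bigcup_{n\in\mathbb{Z}}\V[n]$ are thick subcategories of $\Perf(R)$: each is closed under all shifts, and is closed under extensions and retracts because $\U$ and $\V$ are. For any $P\in\Perf(R)$ the truncation triangle $\tau_{\U}P\to P\to\tau_{\V}P$ realises $P$ as an extension of an object of $\V^{\infty}$ by one of $\U^{\infty}$, so the thick subcategory generated by $\U^{\infty}\cup\V^{\infty}$ is all of $\Perf(R)$. By the classification of thick subcategories of $\Perf(R)$ this forces $\Supp(\U^{\infty})\cup\Supp(\V^{\infty})=\Spec$, and each of these is specialization-closed; since $\Spec$ is irreducible, its generic point lies in one of them, and that one is therefore all of $\Spec$. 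Hence $\U^{\infty}=\Perf(R)$ or $\V^{\infty}=\Perf(R)$. The statement to be proved is preserved by the duality $\mathbf{R}\ho_{R}(-,R)$ on $\Perf(R)$, which exchanges the two halves of a t-structure (up to a shift) and exchanges these two possibilities, so I may assume $\U^{\infty}=\Perf(R)$. Then $R\in\U^{\infty}$ gives $R[m]\in\U$ for all $m\gg 0$, and cancelling top terms in a complex of finitely generated projectives shows that $\U$ contains every perfect complex with cohomology in degrees $\leqslant -m_{0}$, for some $m_{0}$; after shifting the t-structure I may assume $\U$ contains $\Perf(R)^{\leqslant 0}$, the perfect complexes with cohomology in non-positive degrees.

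Next I would extend $\U$ to the smallest aisle $\widetilde{\U}$ of $\D(R)$ that contains it. Since every object of $\U$ is compact in $\D(R)$, a standard compactness argument gives $\widetilde{\U}\cap\Perf(R)=\U$ and $\widetilde{\U}$ compactly generated, so by the classification recalled in Section~3 there is a decreasing filtration $\phi$ of $\Spec$ by specialization-closed subsets with $\widetilde{\U}=\{X\in\D(R):\Supp H^{i}(X)\subseteq\phi(i)\ \text{for all }i\}$, and the inclusion $\Perf(R)^{\leqslant 0}\subseteq\U$ forces $\phi(i)=\Spec$ for $i\leqslant 0$. The entire problem is now to show that $\phi$ is constant, equal to $\Spec$: this gives $\U=\widetilde{\U}\cap\Perf(R)=\Perf(R)$, which together with the trivial case $\U=0$ is the theorem. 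Suppose $\phi$ is non-constant and put $m^{\ast}=\min\{m:\phi(m)\neq\Spec\}\geqslant 1$; by irreducibility the generic point does not lie in $\phi(m^{\ast})$, i.e. $\widetilde{\U}$ loses full support in cohomological degree $m^{\ast}$.

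The hard part is to show that when $R$ is singular such a $\widetilde{\U}$ cannot restrict to an aisle on $\Perf(R)$. I would look for a perfect complex $X$ whose truncation $\tau^{\geqslant m^{\ast}}X$ with respect to $\widetilde{\U}$ is not perfect, constructing $X$ by coupling the support jump of $\phi$ at $m^{\ast}$ with a finitely generated module of infinite projective dimension supported at a point where $R$ is singular: begin with a two-term presentation of such a module and arrange, using the jump, that the module is thrown entirely into the degree-$m^{\ast}$ part of the truncation, then show that $\tau^{\geqslant m^{\ast}}X$ has unbounded Tor-amplitude. The delicacy is precisely here: it does not suffice for a cohomology module of $\tau^{\geqslant m^{\ast}}X$ to have infinite projective dimension, since perfect complexes over a singular ring routinely have such cohomology, so the construction must be rigid enough to obstruct every bounded complex of free modules. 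The singularity of $R$ is used only in this last step; for regular $R$ one instead recovers the classification of Alonso Tarrío, Jeremías López and Saorín, in which non-constant filtrations do occur.
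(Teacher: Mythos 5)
Your preliminary reduction is correct and genuinely different from the paper's: passing to the thick subcategories $\U^\infty$ and $\V^\infty$, invoking the Hopkins--Neeman--Thomason classification, and using irreducibility to deduce that one of them is all of $\Perf(R)$, then using $\mathbf{R}\ho_R(-,R)$-duality to normalise, is a clean way to pin down one end of the filtration. The paper instead handles both ends of the filtration symmetrically, without duality, via its local-cohomology argument. So far this is a valid alternative route.

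However, the proof stops precisely where the real work begins, and the sketch of "the hard part" does not match a mechanism that would actually close the argument. After your reduction you must rule out two qualitatively different kinds of non-constant filtration $\phi$ with $\phi(i)=\Spec$ for $i\leqslant 0$: (a) those that eventually reach $\emptyset$, and (b) those that stabilise at a non-empty Thomason subset of height $\geqslant 1$. The paper disposes of (a) with Lemma \ref{doesnotrestrict}: after localising at a singular maximal ideal $\mathfrak{m}$, one shows that the truncation of the Koszul complex $K(\overbar{m})$ is the residue field $k$, whose infinite projective dimension (precisely because $R$ is singular) prevents it from being perfect. Case (b) is disposed of by Proposition \ref{summand} together with Corollary \ref{infg}: an infinite stretch of $Z^i$'s of the same height $h\geqslant 1$ forces $\text{H}^{h+a}(\ctr{\Phi}{0}R[-a])_{\mathfrak{p}}$ to contain $\text{H}^{h}_{\mathfrak{p}R_\mathfrak{p}}(R_\mathfrak{p})$ as a direct summand, and this local cohomology module is infinitely generated, so the truncation of $R[-a]$ cannot be perfect. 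Your proposal gestures instead at building a perfect $X$ from a module of infinite projective dimension and showing the truncation has "unbounded Tor-amplitude"; as you yourself note, infinite projective dimension of cohomology is not an obstruction to perfection, and the sketch does not explain how one would get a genuine obstruction. Moreover case (b), where the filtration never reaches $\emptyset$, is not engaged at all by a construction that hinges on a support jump at a single degree $m^\ast$. Concretely, the missing ingredients are (i) the Koszul-complex/residue-field argument of Lemma \ref{doesnotrestrict}, and (ii) the infinite generation of $\text{H}^h_{\mathfrak{p}R_\mathfrak{p}}(R_\mathfrak{p})$ and the direct-summand mechanism of Proposition \ref{summand}; note that (ii), unlike (i), uses only that $\text{dim}R\geqslant 1$ and works for regular $R$ as well, contrary to your closing remark that singularity enters only in the final step.
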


The arguments used to show the above rely on the observation that t-structures on $\Perf(R)$ will  extend to $\D(R)$. This allows us to invoke a classification 
result due to Alonso Tarrio, Jeremias Lopez, and Saorin \cite{atjls}, identifying compactly generated t-structures on $\D(R)$ for Noetherian $R$
with decreasing filtrations of Thomason subsets of $\Spec$.

Asking if a given t-structure restricts to a triangulated subcategory is equivalent to asking if the corresponding truncation functors preserve that subcategory.
This means that in order to prove the above two theorems, it will suffice to show that for each t-structure we can choose a perfect complex whose truncation
is not perfect. In the singular case, there are two main types of obstruction preventing the various t-structures on $\D(R)$ from restricting to  $\Perf(R)$.

Firstly, for t-structures corresponding to filtrations terminating below at the empty set, it is possible to show that a truncation of the Koszul complex corresponding to 
the singular point will give the residue field of that point. In this case, this residue field has infinite projective dimension, and could not be a perfect complex
(see  lemma \ref{doesnotrestrict}).
In the remaining cases, we observed that the cohomology of the truncation of a module would contain local cohomology groups, up to localization at an appropriate prime 
(see proposition \ref{summand}).  Since local cohomology modules are often infinitely generated over $R$, this also shows that the truncation cannot be a perfect complex.

In \cite{bridge1} Bridgeland introduced stability conditions for triangulated categories. He proved that defining a stability condition on a triangulated category is equivalent
to finding a bounded t-structure, and then defining a stability function on its heart, satisfying the Harder-Narasimham property. 
As such our result also proves the non-existence of stability conditions on $\Perf(R)$ for singular $R$.

\subsection{Notation and conventions}

Let $R$ denote a commutative ring with identity throughout.  Let $\text{Mod}(R)$ denote the abelian category of $R$-modules. 
Let $\text{D}(R)$ denote the \textbf{unbounded derived category} of $R$, that is the triangulated category of chain complexes of $R$-modules up 
to quasi-isomorphism. For $R$ Noetherian, let $\Db(R)$ denote the \textbf{bounded derived category} of $R$, that is the triangulated subcategory of $\text{D}(R)$ of complexes with finitely generated and bounded 
homology, up to quasi-isomorphism. Let $\text{Perf}(R) \subseteq \Db(R)$ denote the triangulated category of \textbf{perfect complexes} over $R$, that is the complexes quasi-isomorphic to a bounded complex of 
finitely generated projective 
modules. 

For any category $\mathscr{C}$ and any collection of objects $\U \subseteq \mathscr{C}$, define two associated collections of objects, the right-orthogonal and left-orthogonal
respectively, given by
\[ \mathscr{U}^{\perp} = \{ Y \in \mathscr{C}\ | \ \ho_{\mathscr{C}}(X,Y) \simeq 0 \ \text{all} \ X \in 
\mathscr{U} \} \]
\[ ^{\perp}\!\mathscr{U} = \{ X \in \mathscr{C}\ | \ \ho_{\mathscr{C}}(X,Y) \simeq 0 \ \text{all} \ Y \in 
\mathscr{U} \}. \]

We use cohomological indexing conventions. On a triangulated category (or stable $\infty$-category) $\mathscr{C}$, let $[1]$ denote the suspension functor, and let $[n]$ denote its $n$th iteration.
This convention is such that, for $M, N \in \text{Mod}(R)$, viewed as objects of $\D(R)$, we have $\text{Ext}_R^n(M, N) = \text{Hom}_{\D(R)}(M, N[n])$.
A t-structure on a stable $\infty$-category is by definition a t-structure on its homotopy category, see \cite[1.2.1.4]{lur3}. Therefore, for the rest of this paper we will only use the language of 
triangulated categories.

\vspace{0.1cm}

\section{Preliminaries on t-structures}

We recall basic results on t-structures. 
\begin{definition} 
\label{tstr}
A \textbf{t-structure} on a triangulated category $\mathscr{C}$, consists of a pair of full 
subcategories $(\mathscr{U}, \mathscr{V})$  satisfying the following
conditions:
\begin{enumerate}
\item $\mathscr{U}[1] \subseteq \mathscr{U}$ and 
$\mathscr{V} \subseteq \mathscr{V}[1]$.

\item $\text{Hom}_{\mathscr{C}}(X[1], Y)=0$ for any $X \in \mathscr{U}$
and $Y \in \mathscr{V}$.

\item For any $X \in \mathscr{C}$ we have a triangle $\ctr{}{0}X \to
X \to \cotr{}{1}X$ where $\ctr{}{0}X \in \mathscr{U}$ and
$\cotr{}{1}X \in \mathscr{V}[-1]$.
\end{enumerate}
\end{definition}
We call $\U$ the \textbf{aisle} and $\V$
the \textbf{coaisle}. We set $\ais{}{n}{\U} = \U[-n]$ and 
$\cais{}{n}{\V} = \V[-n]$, but we will drop the superscript for $n=0$. The category $\mathscr{C}^{\heartsuit} = \U \cap \V$ 
is called the \textbf{heart} of the t-structure. For triangulated categories $\C$ and $\mathscr{D}$, with t-structures 
$(\U, \V)$ and $(\mathscr{E}, \mathscr{F})$ respectively, an exact functor $\mathscr{F}: 
\mathscr{C} \to \mathscr{D}$  is said to be \textbf{t-exact} if 
$\mathscr{F}(\U) \subseteq \mathscr{E}$
and $\mathscr{F}(\V) \subseteq \mathscr{F}$. 
We have the following well known facts about t-structures, 
see \cite[1.2.1]{lur3}.

\begin{enumerate}
\item Both $\ais{}{n}{\U}$ and $\cais{}{n}{\V}$ are closed under extensions.

\item We have $\ais{}{n}{\U} = {}^{\perp}\!(\cais{}{n+1}{\V})$ and
${(\ais{}{n}{\U}})^{\perp} = \cais{}{n+1}{\V}$, and so any t-structure is fully determined by either its aisle or coaisle.

\item The inclusion $\ais{}{n}{\U} \to \mathscr{C}$ (resp. 
$\cais{}{n}{\V} \to \mathscr{C}$) has a right (resp. left) adjoint,
denoted $\ctr{}{n}$ (resp. $\cotr{}{n}$), and called the 
\textbf{connective (resp. coconnective) truncation functors} of the t-structure.

\item The functors $\ctr{}{0}$ and $\cotr{}{1}$ coincide with the triangle in part (3)
of definition \ref{tstr}, and this triangle is necessarily unique.

\item $\mathscr{C}^{\heartsuit}$ is an abelian subcategory of $\mathscr{C}$ and the functor \[\ctr{}{0} \circ \cotr{}{0}: \mathscr{C} \to \mathscr{C}^{\heartsuit}\]
is cohomological. That is to say, it takes triangles in $\mathscr{C}$ to long exact sequences in $\mathscr{C}^{\heartsuit}$.

\end{enumerate}
A t-structure is said to be generated by a collection of objects $\mathscr{S}$ if
$\V = \mathscr{S}^{\perp}$. If $\mathscr{C}$ is compactly generated, and $\mathscr{S}$ is composed
of compact objects, then the t-structure is said to be \textbf{compactly generated}. A t-structure is said to be \textbf{bounded} if the inclusion
\[ \bigcup_{n \to \infty} \ais{}{n}{\U} \cap \cais{}{-n}{\V} \to \mathscr{C}  \]
is an equivalence. This amounts to asking that for all $X \in \mathscr{C}$, there is $m$ with $X \in \ais{}{m}{\U}$, and there is $n$ with $X \in \cais{}{n}{\V}$. 
It only makes sense to consider the boundedness of t-structures on smaller categories.
In fact, is easy to see that there can be no bounded t-structure on any non-zero triangulated category with arbitrary coproducts.

Let $\mathscr{C}$ be a triangulated category and let $\mathscr{D} \subseteq \mathscr{C}$ be a triangulated subcategory. 
Let $(\mathscr{U}, \mathscr{V})$ be a t-structure on $\mathscr{C}$, and let 
$(\mathscr{E}, \mathscr{F})$ be a t-structure on $\mathscr{D}$. If $\mathscr{E} = \mathscr{U} \cap \mathscr{D} $ and $\mathscr{F} = \mathscr{V} \cap \mathscr{D}$, then we say that $\mathscr{U}$ \textbf{restricts} to $\mathscr{E}$ and $\mathscr{E}$ 
\textbf{extends} to $\mathscr{U}$. 
In this case, it is clear that the trunctation functors on $\mathscr{D}$ are the restrictions of those on $\mathscr{C}$, and that the inclusion functor
$\mathscr{D} \hookrightarrow \mathscr{C}$ is t-exact. 

\begin{prop}
\label{exttstr}
Let $\mathscr{C}$ be a small, triangulated category with a t-structure $(\mathscr{E}, \mathscr{F})$. This t-structure extends 
to a compactly generated t-structure $(\mathscr{U}, \mathscr{V})$ on $\emph{Ind}(\mathscr{C})$, with aisle 
$\mathscr{U} \simeq \emph{Ind}(\mathscr{E})$, and with coaisle 
$\mathscr{V} \simeq \emph{Ind}(\mathscr{F})$.
\end{prop}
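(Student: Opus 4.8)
The plan is to build the candidate t-structure on $\ind(\C)$ directly from the functor-category description of $\ind(\C)$ and then verify the axioms of Definition \ref{tstr}, using compact generation to handle the truncation triangles. First I would set $\U := \ind(\mathscr{E})$ and $\V := \ind(\mathscr{F})$, regarded as the full subcategories of $\ind(\C)$ on filtered colimits of objects of $\mathscr{E}$ (resp.\ $\mathscr{F}$). Axiom (1), stability of $\U$ under $[1]$ and of $\V$ under $[-1]$, is immediate since suspension commutes with filtered colimits and $\mathscr{E}[1]\subseteq\mathscr{E}$, $\mathscr{F}\subseteq\mathscr{F}[1]$ in $\C$. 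For axiom (2), the orthogonality $\ho(X[1],Y)=0$ for $X\in\U$, $Y\in\V$: writing $X=\coli_i X_i$ with $X_i\in\mathscr{E}$ and $Y=\coli_j Y_j$ with $Y_j\in\mathscr{F}$, I would use that the $X_i$ are compact in $\ind(\C)$ to commute $\ho(X_i[1],-)$ past the colimit defining $Y$, reducing to $\ho_{\C}(X_i[1],Y_j)=0$, which holds by the t-structure on $\C$; the colimit over $i$ then also vanishes.

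The substantive axiom is (3): producing, for an arbitrary $Z\in\ind(\C)$, a truncation triangle $A\to Z\to B$ with $A\in\U$ and $B\in\V[-1]=\ind(\mathscr{F}[-1])$. Here I would write $Z=\coli_i Z_i$ as a filtered colimit of objects $Z_i\in\C$, apply the truncation functors of the t-structure $(\mathscr{E},\mathscr{F})$ on $\C$ to get triangles $\ctr{}{0}Z_i\to Z_i\to\cotr{}{1}Z_i$, and take the filtered colimit of these triangles in $\ind(\C)$. Since filtered colimits in $\ind(\C)$ are exact (they are computed pointwise in the presheaf category and preserve triangles), this yields a triangle $\coli_i\ctr{}{0}Z_i\to Z\to\coli_i\cotr{}{1}Z_i$ whose outer terms lie in $\ind(\mathscr{E})=\U$ and $\ind(\mathscr{F}[-1])=\V[-1]$ respectively. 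I must check the colimit diagrams are genuinely functorial, i.e.\ that the $\ctr{}{0}$ and $\cotr{}{1}$ of the transition maps assemble into a filtered diagram; this follows from functoriality of the truncation functors on $\C$ (fact (3)/(4) in the recalled list). This establishes that $(\U,\V)$ is a t-structure.

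It then remains to identify it as compactly generated and as the extension of $(\mathscr{E},\mathscr{F})$. For compact generation, note $\mathscr{E}$ consists of compact objects of $\ind(\C)$ and $\V=\ind(\mathscr{F})=\mathscr{F}^{\perp}\text{ in }\ind(\C)$: one inclusion is the orthogonality just proved, and for the reverse I would use the truncation triangle of axiom (3) together with the orthogonality to show any $Y\in\mathscr{E}^{\perp}$ has vanishing $\U$-part, hence $Y\simeq B\in\V$. Actually since $^{\perp}\V = \U$ is generated by $\mathscr{E}$ and $\mathscr{E}\subseteq\C$ is compact, $\V=\mathscr{E}^\perp$ gives compact generation by definition. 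Finally, for the restriction claim $\mathscr{E}=\U\cap\C$ and $\mathscr{F}=\V\cap\C$: the inclusions $\supseteq$ are clear, and for $\subseteq$ I would use that $\C\hookrightarrow\ind(\C)$ is fully faithful and that an object of $\C$ lying in $\ind(\mathscr{E})$ is, by compactness, a retract of some $X_i\in\mathscr{E}$, hence lies in $\mathscr{E}$ provided $\mathscr{E}$ is closed under retracts in $\C$ — which holds because $\mathscr{E}$, being an aisle, is closed under direct summands (it is closed under extensions and contains the relevant zero maps; more directly, aisles are always closed under retracts).

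I expect the main obstacle to be the bookkeeping in axiom (3): making the assignment $i\mapsto(\ctr{}{0}Z_i\to Z_i\to\cotr{}{1}Z_i)$ into an honest filtered diagram of triangles and confirming that its colimit in $\ind(\C)$ is again a triangle with the colimit taken termwise. This is where one genuinely uses that $\ind(\C)$ is the free filtered-cocompletion, so that filtered colimits are exact and commute with the (co)homological truncation bookkeeping; everything else reduces to the compactness of objects of $\C$ inside $\ind(\C)$ and to properties of $(\mathscr{E},\mathscr{F})$ already available in $\C$.
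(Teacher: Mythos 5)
Your plan is sound in outline — it mirrors the structure of what happens inside the citation the paper uses — but the load-bearing step is asserted rather than proved. In verifying axiom (3) you invoke that ``filtered colimits in $\text{Ind}(\mathscr{C})$ are exact (computed pointwise in the presheaf category) and preserve triangles.'' For a bare small \emph{triangulated} category $\mathscr{C}$ it is not known that $\text{Ind}(\mathscr{C})$ carries a triangulated structure at all (non-functoriality of cones obstructs the naive construction), and even granting one, the claim that filtered colimits take distinguished triangles to distinguished triangles is not a formal consequence of free filtered cocompletion. Establishing this is precisely the non-trivial content that requires a stable $\infty$-categorical (or DG) enhancement, and it is exactly what the paper delegates to Lurie's result [C.2.4.3], applied through the homotopy functor. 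Your proposal effectively re-proves that theorem while leaving its key lemma as an assertion; you should either cite the enhancement-based result as the paper does, or state explicitly that you are taking $\text{Ind}$ of a stable $\infty$-category whose homotopy category is $\mathscr{C}$, consistent with the paper's stated convention.

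Two smaller remarks. In the orthogonality check you conclude ``the colimit over $i$ then also vanishes''; for $X\simeq\text{colim}_i X_i$ the Ind-Hom formula reads $\text{Hom}_{\text{Ind}(\mathscr{C})}(X,Y)\simeq\text{lim}_i\,\text{colim}_j\,\text{Hom}_{\mathscr{C}}(X_i,Y_j)$, so the outer operation is a \emph{limit}; the conclusion is unaffected, but the slip is worth fixing. For compact generation, your first sketch (examining the $\mathscr{U}$-part of a truncation triangle for $Y\in\mathscr{E}^\perp$) is more circuitous than necessary; the clean argument, which is what the paper does and is essentially your own ``actually'' fallback, is: for $Y\in\mathscr{E}^\perp$ and any $X\simeq\text{colim}_i X_i\in\mathscr{U}$ with $X_i\in\mathscr{E}$ one has $\text{Hom}(X,Y)\simeq\text{lim}_i\text{Hom}(X_i,Y)=0$, whence $Y\in\mathscr{U}^\perp=\mathscr{V}$.
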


\begin{proof}
The fact that this gives a t-structure is \cite[C.2.4.3]{lur1} with the homotopy functor applied. It remains 
to show that this t-structure is compactly generated.
By \cite[5.3.5.5]{lur2}, $\mathscr{C}$ consists of compact objects when viewed as a subcategory 
of $\text{Ind}(\mathscr{C})$. Therefore, to see that $(\U, \V)$ is compactly generated on 
$\text{Ind}(\mathscr{C})$, it suffices to show that $\mathscr{E}{}^{\perp} = 
\V$, with orthogonals taken in $\text{Ind}(\mathscr{C})$.

To see $\V \subseteq \mathscr{E}{}^{\perp}$,
take some $Y \simeq \text{colim}_{j \in J}Y_j \in \V$ with each 
$Y_i \in \mathscr{F}$. Take some $X \in \mathscr{E}$.
Since $(\mathscr{E}, \mathscr{F})$ is a t-structure, we know that 
$\ho_{\mathscr{C}} (X, Y_j) \simeq 0$.
Using the construction of Ind-completions, we see that 
\[ \text{Hom}_{\text{Ind}(\mathscr{C})}(X,Y) \simeq  \coli_j \ho_{\mathscr{C}}
(X, Y_j) \simeq 0. \]
To see $\mathscr{E}{}^{\perp} \subseteq \V $, 
take some $Y \in \mathscr{E}{}^{\perp}$ and take some $X \simeq \text{colim}_{i \in I}X_i \in \U$, such that $X_i \in \mathscr{E}$. We have
\[ \text{Hom}_{\text{Ind}(\mathscr{C})}(X,Y) \simeq  \text{lim}_i \ho_{\mathscr{C}}
(X_i, Y) \simeq 0. \]
Therefore, it follows that $Y \in \U{}^{\perp} = 
\V$ and $(\U, \V)$ is compactly generated by 
$\mathscr{E}$. 
\end{proof}

\vspace{0.1cm}

\section{t-structures on the unbounded derived category}

In this section we discuss the correspondence between t-structures on $\D(R)$ and certain filtrations of subsets of $\Spec$. For some ideal $\mathfrak{a} \subset R$, let 
\[ V(\mathfrak{a}) = \{  \mathfrak{p} \ | \ \mathfrak{p} \supseteq  \mathfrak{a}  \} = \text{Spec}R/\mathfrak{a} \subseteq \Spec \] be the associated Zariski closed set.
Let $M \in \text{Mod}(R)$. Define a subset of $\Spec$ called the \textbf{support} by \[ \text{Supp}M = \{  \mathfrak{p} \ | \ M_{\mathfrak{p}} \neq 0  \}. \] 
Over a Noetherian ring, we have that 
$ \text{Supp}M \subseteq V(\text{ann}M)$, with equality when $M$ is finitely generated \cite[2.2]{bens}. We also have that, for some prime ideal $\mathfrak{p}$,
$M$ is $\mathfrak{p}$-torsion if and only if $\text{Supp}M \subseteq V(\mathfrak{p})$ \cite[2.4]{bens}. 

Let $Z$ be a subset of $\Spec$. We say that $Z$ is \textbf{specialization closed} if for all $\mathfrak{p} \in Z$ and for all $\mathfrak{p} \subseteq \mathfrak{q}$ we have $\mathfrak{q} \in Z$.
Specialization closed subsets are precisely the arbitrary unions of Zariski closed sets in $\Spec$. We further say that $Z$ is a \textbf{Thomason subset} if it is an arbitrary union of Zariski closed sets with quasi-compact
complement. Clearly Thomason subsets are specialization closed.
Over a Noetherian ring every open set of $\Spec$ is quasi-compact, meaning that in this case, the specialization closed subsets are exactly the Thomason subsets. 
 For $Z$, a specialization closed subset or a Thomason subset, define its \textbf{height} by 
\[ \text{height}(Z) \coloneqq \text{inf}\{ \text{height}(\mathfrak{p}) \ | \ 
V(\mathfrak{p}) \subseteq Z \}. \]
 Say that $\mathfrak{p} \in Z$ is a \textbf{minimal prime} of $Z$ if
$\text{height}(\mathfrak{p}) = \text{height}(Z)$. These exist for any non-empty Thomason
subset.

For a finitely generated ideal with a fixed set of generators $(x_1,...,x_r) \subset R$ we define the corresponding \textbf{Koszul complex} 
in $\text{Perf}(R)$ by \[ K(\overbar{x})=\bigotimes_{j=1}^r K(x_j) \] where $K(x_j)=\text{cone}(R[0] \xrightarrow[]{x_j} R[0])$. Note that a different choice of generators for the same ideal does not 
necessarily give a quasi-isomorphic complex, see \cite[1.6.21]{bruns}. Koszul complexes have the following properties:
\begin{enumerate}
\item $\text{H}^0(K(\overbar{x})) = R/(\overbar{x})$

\item The ideal $(\overbar{x})$ annihilates $\text{H}^i(K(\overbar{x}))$ for all $i$, in particular $\text{Supp} \text{H}^i(K(\overbar{x})) \subseteq V(\bar{x})$, \cite[1.6.5(b)]{bruns}.

\item  $\text{H}^i(K(\overbar{x})) = 0$ for all $i \neq 0$, if and only if $(\overbar{x})$ forms a regular sequence on $R$. In this case $K(\overbar{x})$ gives a free resolution of $R/(\overbar{x})$,
 \cite[1.6.14]{bruns}.

\end{enumerate}

\noindent We will now discuss the classifcation for compactly generated t-structures on the unbounded derived category of a ring $R$. 
To some collection of objects $\mathscr{U} \subseteq \D(R)$ such that $\mathscr{U}[1] \subseteq \mathscr{U}$, assign a decreasing filtration $\Phi_{\mathscr{U}}$ of subsets of $\Spec$ by
\[\Phi_{\mathscr{U}} = \left\{ Z^i = \bigcup\{ V(\mathfrak{a}) \ | \ \mathfrak{a} \ \text{f.g. ideal s.t.} \ R/\mathfrak{a}[-i] \in 
	\mathscr{U}	 \}  \right\}_{i \in \mathbb{Z}} \]
Let $\Phi = \{ Z^i \}_{i \in \mathbb{Z}}$ be an decreasing filtration of Thomason 
subsets on $\Spec$.
Define a full subcategory $\mathscr{U}_\Phi \subseteq \D(R)$ and a collection of Koszul complexes $\mathscr{S}_\Phi$ by
\begin{align*}
\mathscr{U}_\Phi    =& \{ X \in \D(R) \ | \ \text{Supp} \text{H}^i(X) \subseteq Z^i, \ i \in \mathbb{Z} \}\\
\mathscr{S}_{\Phi} =& \{ K(\bar{x})[-i] \ |  \ \mathfrak{a} = (\bar{x}) \ \text{f.g. ideal s.t.} \ V(\mathfrak{a}) \subseteq Z^i , \ i \in \mathbb{Z} \}.
\end{align*}
Denote the assignments by $\mu: \mathscr{U} \mapsto \Phi_{\mathscr{U}}$ and $\eta: \Phi \mapsto \mathscr{U}_\Phi$.
We have the following classification result due to Alonso Tarrio, Jeremias Lopez and Saorin \cite[3.11]{atjls}. 
\begin{thm}
\label{classification}
Let $R$ be a Noetherian ring. Then the assignments $\mu$ and $\eta$ give 
a mutually inverse bijection 
\begin{align*}
            \left\{ \parbox[c]{1.8in}{\centering \small
                      \emph{Aisles of compactly generated t-structures on} \emph{D}(R)  }
            \right\}
            \longleftrightarrow 
            \left\{ \parbox[c]{1.4in}{\centering \small
                       \emph{Decreasing Thomason filtrations on} $\emph{Spec}R$}
            \right\} 
\end{align*}
The compact generators of $\mathscr{U}_\Phi$ are given by $\mathscr{S}_\Phi$.
\end{thm}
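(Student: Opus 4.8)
The strategy is to show, in order, that $\eta$ takes values in aisles of compactly generated t-structures, that $\mu$ takes values in decreasing Thomason filtrations, and that the two composites are the identity; essentially all of the content lies in the analysis of $\eta$ and in the fact that a compactly generated aisle is determined by the cohomological supports of its objects.

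Fix a decreasing Thomason filtration $\Phi = \{Z^i\}$. Koszul complexes are perfect, hence compact in $\D(R)$, and since $\Phi$ is decreasing, $V(\overbar{x})\subseteq Z^i$ implies $V(\overbar{x})\subseteq Z^{i-1}$, so $K(\overbar{x})[-i][1] = K(\overbar{x})[-(i-1)]\in\mathscr S_\Phi$; thus $\mathscr S_\Phi$ is a set of compact objects closed under $[1]$. By the standard construction of the t-structure generated by a set of objects in a compactly generated triangulated category, $\mathscr S_\Phi$ generates a compactly generated t-structure $(\mathscr U',\mathscr V')$ with $\mathscr V' = \mathscr S_\Phi^{\perp}$ and $\mathscr U'$ the smallest pre-aisle closed under coproducts containing $\mathscr S_\Phi$. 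The inclusion $\mathscr U'\subseteq\mathscr U_\Phi$ is formal: from the cohomology long exact sequence of a triangle, the fact that the support of the middle term of a short exact sequence of modules lies in the union of the supports of the outer terms, the identities $\mathrm H^i(X[1]) = \mathrm H^{i+1}(X)$ and $\mathrm H^i(\bigoplus_\alpha X_\alpha) = \bigoplus_\alpha\mathrm H^i(X_\alpha)$, and the inclusions $Z^{i+1}\subseteq Z^i$, one sees that $\mathscr U_\Phi$ is a coproduct-closed pre-aisle; and it contains $\mathscr S_\Phi$, since $K(\overbar{x})[-i]$ has vanishing cohomology in degrees above $i$ (the Koszul complex is concentrated in non-positive degrees) while in degrees $j\le i$ its cohomology is supported in $V(\overbar{x})\subseteq Z^i\subseteq Z^j$ by Koszul property (2).

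The reverse inclusion $\mathscr U_\Phi\subseteq\mathscr U'$ is the main point, and the reduction is to the single-layer statement: for a specialization closed $Z\subseteq\Spec$, the localizing subcategory of $\D(R)$ generated by $\{K(\overbar{x})\mid V(\overbar{x})\subseteq Z\}$ equals $\{X\mid\Supp\mathrm H^j(X)\subseteq Z\text{ for all }j\}$. One checks this by dévissage: a finitely generated module $M$ with $\Supp M\subseteq Z$ has a finite prime filtration with quotients $R/\mathfrak p$, $\mathfrak p\in\Supp M\subseteq Z$, and as $R$ is Noetherian each such $\mathfrak p$ is finitely generated with $R/\mathfrak p = \mathrm H^0(K(\overbar{x}))$ for a generating sequence, so a downward induction on amplitude using Koszul properties (1) and (3) places every bounded complex with finitely generated homology supported in $Z$ in the generated subcategory; an arbitrary module supported in $Z$ is a filtered colimit of these, and a general object is built from its shifted cohomology modules by a (possibly two-sided infinite) Postnikov system whose infinite parts are reached by Milnor colimits, all of which a localizing subcategory absorbs. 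This is close to Neeman's classification of localizing subcategories of $\D(R)$ and could instead be quoted from it. Granting it, an object $X\in\mathscr U_\Phi$ is reassembled from its layers $\mathrm H^i(X)[-i]$, each of which lies --- by the single-layer statement with $Z = Z^i$, shifted by $[-i]$ --- in the aisle generated by $\{K(\overbar{x})[-i]\mid V(\overbar{x})\subseteq Z^i\}\subseteq\mathscr S_\Phi$; since an aisle is closed under extensions and under the countable homotopy colimits needed to pass from bounded to unbounded $X$, it follows that $X\in\mathscr U'$. Making this reassembly work in the coconnective direction --- for objects of $\mathscr U_\Phi$ unbounded above --- is where closure of $\mathscr S_\Phi$ under $[1]$ is used (so that $\mathscr U'$ absorbs the relevant colimits), and I expect it to be one of the two genuine difficulties.

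For the remaining assertions: an aisle $\mathscr U$ with $\mathscr U[1]\subseteq\mathscr U$ gives each $Z^i$ as a union of Zariski closed sets, hence specialization closed, hence Thomason as $R$ is Noetherian, and $R/\mathfrak a[-i]\in\mathscr U$ forces $R/\mathfrak a[-(i-1)]\in\mathscr U$, so $\Phi_\mathscr U$ is decreasing. Then $\mu\circ\eta = \mathrm{id}$ is immediate from the identification of $\mathscr U_\Phi$ above: $R/\mathfrak a[-i]\in\mathscr U_\Phi$ exactly when $\Supp R/\mathfrak a = V(\mathfrak a)\subseteq Z^i$, which is the $i$th defining condition of $\Phi_{\mathscr U_\Phi}$. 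For $\eta\circ\mu = \mathrm{id}$ one must prove $\mathscr U_{\Phi_\mathscr U} = \mathscr U$, and both inclusions come down to the statement that for a compactly generated aisle $\mathscr U$ the sets $\{\mathfrak p\mid R/\mathfrak p[-i]\in\mathscr U\}$ are specialization closed and detect membership through the cohomological supports: $X\in\mathscr U$ if and only if $\Supp\mathrm H^i(X)\subseteq Z^i_\mathscr U$ for all $i$. Given this, together with quasi-compactness of closed sets over a Noetherian ring, prime filtrations, and the single-layer statement, one obtains $\mathscr S_{\Phi_\mathscr U}\subseteq\mathscr U$ and hence $\mathscr U_{\Phi_\mathscr U}\subseteq\mathscr U$, while the converse is the ``only if'' direction. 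This detection statement --- proved in \cite{atjls} by their theory of filtrations by supports together with a localization argument (perfect complexes commute with localization, so the coaisle $\mathscr S^{\perp}$ is closed under localization at primes) --- is the other genuine difficulty; with it and the single-layer statement in hand, the bijection and the description of the compact generators follow by formal bookkeeping.
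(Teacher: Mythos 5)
Note first that the paper does not prove this theorem; it is quoted directly as a known result of Alonso Tarrio, Jeremias Lopez and Saorin \cite[3.11]{atjls}, so there is no in-paper proof to compare against. Your sketch captures the right overall shape (show both assignments land where claimed, identify $\mathscr U_\Phi$ as the aisle generated by $\mathscr S_\Phi$, and prove a detection statement that compactly generated aisles are determined by the supports of the cohomologies of their objects), but the two steps you yourself flag as genuine difficulties are left unresolved, and the reduction you do spell out has a mismatch. Your ``single-layer statement'' is phrased for the \emph{localizing} subcategory generated by $\{K(\overbar{x})\mid V(\overbar{x})\subseteq Z\}$, but an aisle is only closed under positive shifts, extensions and coproducts, not under $[-1]$; for a non-constant filtration the localizing subcategory generated by $\mathscr S_\Phi$ is strictly larger than $\mathscr U'$ (this is precisely the difference between a general $\mathscr U_\Phi$ and the smashing subcategories arising from constant filtrations), so placing $\mathrm H^i(X)[-i]$ in the localizing subcategory does not place it in $\mathscr U'$. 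What is actually needed is the finer claim that a finitely generated module $M$ with $\Supp M\subseteq Z$, viewed in degree $0$, is reachable from $\{K(\overbar{x})[n]\mid V(\overbar{x})\subseteq Z,\ n\geqslant 0\}$ using only extensions, nonnegative shifts, coproducts and homotopy colimits; this requires a more delicate induction (essentially the content of the corresponding lemma in \cite{atjls}) than the localizing-subcategory d\'evissage you outline, and the same care is needed in the coconnective reassembly. With that lemma and the detection statement genuinely in hand your bookkeeping would close the argument, but as written this is a plausible roadmap rather than a proof.
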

 In \cite[5.6]{hrbek}, Hrbek used the assignment  $ \Phi \mapsto {}^{\perp}\!({\mathscr{S}_{\Phi}}^{\perp})$ to
show that there is still a bijection in the non-Noetherian case. 
 Below are the most immediate examples of compactly generated t-structures on 
$\text{D}(R)$ with reference to their 
corresponding Thomason filtrations.

\begin{example}[\textbf{Trivial t-structures}]
\label{triv}
We have two trivial t-structures $(\D(R), 0)$ and $(0, \D(R))$, corresponding to the 
constant Thomason filtration at $\Spec$ and the constant Thomason filtration at $\emptyset$ respectively. All triangulated
categories immediately have such t-structures.
\end{example}

\begin{example}[\textbf{Standard t-structure}]
Consider the Thomason filtration $\Phi_{st} = \{ \Spec \ i \leqslant 0, \ \emptyset \ i > 0 \}$. This gives rise to the standard t-structure, with aisle and coaisle given by
\begin{align*}
\mathscr{U}_{st} =& \ \ais{}{0}{\D(R)} =  \{ X \in \text{D}(R) \ | \ \text{H}^i(X) = 0, \ i > 0 \} \\
\mathscr{V}_{st} =& \ \cais{}{0}{\D(R)} = \{ X \in \text{D}(R) \ | \ \text{H}^i(X) = 0, \ i < 0 \}
\end{align*}
and heart $\text{D}(R)_{st}^{\heartsuit} \simeq \text{Mod}(R)$. The corresponding truncation functors
are described below, up to quasi-isomorphism.
\begin{alignat*}{5}
				X:& \  ... \to  	X^{n-2}     && \to \ \ X^{n-1}              && \to  \ \ \ \  X^n                   && \to X^{n+1} && \to  ... \\
       \ctr{st}{n-1}X:& \ ... \to   X^{n-2} && \to  \text{ker}(d^{n-1}) && \to \ \ \ \ \ 0                     && \to \ \ \ 0    && \to ... \\
       \cotr{st}{n}X:& \ ... \to  \ \ 0           && \to \ \ \ \ 0                    && \to  \text{coker}(d^{n-1}) && \to X^{n+1} && \to ...
\end{alignat*}
\end{example}

\noindent In particular, notice that 
\[ \text{H}^i(\ctr{st}{n-1}X) =\begin{cases}
    \text{H}^i(X), & i \leqslant n-1,\\
    0, & i \geqslant n.
  \end{cases} \]
Similarly, 
\[ \text{H}^i(\cotr{st}{n}X) =\begin{cases}
    0, & i \leqslant n-1,\\
    \text{H}^i(X), & i \geqslant n.
  \end{cases} \]

\begin{example}[\textbf{Constant t-structures}]
Fix some Thomason subset $Z$. Consider the constant filtration $\Phi_{Z} = \{Z\}$. Invoking theorem \ref{classification}, the corresponding 
t-structure $(\mathscr{U}_{Z}, \mathscr{V}_{Z})$ 
is given by
\[ \mathscr{U}_{Z} = \{X \in \D(R) \ | \ \text{Supp}\text{H}^i(X) \subseteq Z, \ i \in \mathbb{Z} \}. \]
It is immediate from theorem \ref{classification}, that these are exactly those t-structures with the property that both the aisle and coaisle are closed under shifting in both direction.
Furthermore, orthogonality gives us that  $\text{D}(R)_{Z}^{\heartsuit} \simeq 0$.

In this case, we have that both $\mathscr{U}_{Z}$ and $\mathscr{V}_{Z}$ are themselves thick, triangulated subcategories of $\D(R)$. 
In fact, the categories $\mathscr{U}_{Z}$ are exactly the 
smashing subcategories of $\D(R)$, see theorem 3.3 \cite{neem1}.
Furthermore, the connective truncation functor is the local cohomology functor $\textbf{R}\Gamma_{Z}$. See appendix A for more details on this functor.
\end{example}

\begin{example}[\textbf{Tilting t-structures}]
Fix some Thomason subset $Z$. Consider the t-structure corresponding to the filtration
\[ \Psi = \begin{cases}
    \Spec, & i \leqslant 0,\\
    Z, & i = 1, \\
    \emptyset, & i \geqslant 2.
  \end{cases} \]
The truncation functors
are described below
\begin{alignat*}{5}
				X:& \  ... \to  	X^{n-2}     && \to \ \ X^{n-1}              && \to  \ \ \  X^n                   && \to X^{n+1} && \to  ... \\
       \ctr{\Psi}{n-1}X:& \ ... \to   X^{n-2} && \to  \ \  X^{n-1}    && \to \ \ \  M                     && \to \ \ \ 0    && \to ... \\
       \cotr{\Psi}{n}X:& \ ... \to  \ \ 0           && \to \ \ \ \ 0                    && \to \  X^n/M  && \to X^{n+1} && \to ...
\end{alignat*}
\noindent with $M$ given by the pullback square
\begin{center}
\begin{tikzpicture}[>=stealth]
   \matrix (m) [
       matrix of math nodes,
       row sep=2em,
       column sep=1em,
       text height=1.5ex,
       text depth=0.25ex
     ] {                                        M        &              \text{ker}(d^{n})             \\

                   \Gamma_Z(\text{H}^n(X))     &            \text{H}^n(X).     \\
     };
     \path[->]            (m-1-1) edge    (m-1-2)
					(m-1-2) edge  (m-2-2)
					
 					(m-1-1) edge    (m-2-1)
					(m-2-1) edge  (m-2-2);
\end{tikzpicture}
\end{center}
\noindent This t-structure has been known about for some time, and is constructed from the standard t-structure by a process called tilting, which creates a new t-structure
out of a torsion pair on the heart of another. In \cite[1.1.2]{polishchuk2}, Polishchuk shows that two t-structures $\Phi$ and $\Psi$ are a tilt away from each when
\[ \ais{\Phi}{0}{\U} \subseteq \ais{\Psi}{0}{\U} \subseteq \ais{\Phi}{1}{\U}. \]
For corresponding Thomason filtrations $\Phi = \{Z^i\}$ and $\Psi = \{W^i\}$, this amounts to asking that $Z^i \subseteq W^i \subseteq Z^{i-1}$ for all $i$. 
\end{example}

The following result is also from \cite[3.11]{atjls} and characterises the coaisle corresponding to a Thomason filtration. 

\begin{prop}
\label{coaisle}
Let $R$ be a Noetherian ring, let $\Phi = \{ Z^i \}$ be a Thomason filtration on $\emph{Spec}R$ corresponding to an aisle $\mathscr{U}_\Phi$ in $\emph{D}(R)$. Then the corresponding coaisle 
is given by
\[ \mathscr{V}_\Phi    = \{ Y \in \emph{D}(R) \ | \ \ctr{st}{i}\emph{\textbf{R}}\Gamma_{Z^{i+1}}Y \simeq 0, \ i \in \mathbb{Z} \} \]
\end{prop}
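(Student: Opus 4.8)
The plan is to deduce the statement from two inputs. The first is that, by Theorem~\ref{classification} together with item~(2) in the list of basic properties of t-structures, the coaisle is the right orthogonal $\mathscr{V}_\Phi = (\ais{}{-1}{\mathscr{U}_\Phi})^{\perp} = (\mathscr{U}_\Phi[1])^{\perp}$; unwinding the explicit description of $\mathscr{U}_\Phi$, the shifted aisle is $\mathscr{U}_\Phi[1] = \{X \in \D(R) \mid \Supp \text{H}^k(X) \subseteq Z^{k+1}\ \text{for all}\ k\}$, and as an aisle it is generated by the Koszul complexes $K(\bar x)[-m]$ with $V(\bar x) \subseteq Z^{m+1}$. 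The second input is that, by the discussion of the constant t-structures, for any Thomason subset $Z$ the functor $\textbf{R}\Gamma_Z$ is the connective truncation of the t-structure whose aisle $\mathscr{U}_Z = \{X \mid \Supp \text{H}^i(X) \subseteq Z\ \text{for all}\ i\}$ is the subcategory of $Z$-torsion complexes; hence $\textbf{R}\Gamma_Z$ is right adjoint to the inclusion $\mathscr{U}_Z \hookrightarrow \D(R)$, and so $\ho_{\D(R)}(T, \textbf{R}\Gamma_Z Y) \simeq \ho_{\D(R)}(T, Y)$ for every $Z$-torsion complex $T$. I will also use freely that $\mathscr{U}_Z$ is closed under the standard truncations and that $\ctr{st}{i}W \simeq 0$ exactly when $\text{H}^k(W) = 0$ for all $k \leqslant i$.

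For the inclusion from left to right, fix $Y \in \mathscr{V}_\Phi$ and $i \in \mathbb{Z}$ and put $W = \textbf{R}\Gamma_{Z^{i+1}}Y$, which is $Z^{i+1}$-torsion. First I would note that every complex $A$ which is $Z^{i+1}$-torsion and has $\text{H}^k(A) = 0$ for $k > i$ lies in $\mathscr{U}_\Phi[1]$: there is no cohomology above degree $i$, and in degrees $k \leqslant i$ the cohomology is supported in $Z^{i+1} \subseteq Z^{k+1}$ since $\Phi$ is decreasing. Such an $A$ therefore satisfies $\ho(A, Y) = 0$, and by the torsion adjunction $\ho(A, W) \simeq \ho(A, Y) = 0$. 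Now $\ctr{st}{i}W$ is itself such an $A$ (being a standard truncation of the $Z^{i+1}$-torsion complex $W$, it is $Z^{i+1}$-torsion, and it has no cohomology above degree $i$), so applying the above with $A = \ctr{st}{i}W$ shows the canonical morphism $\ctr{st}{i}W \to W$ is zero. Plugging this into the truncation triangle $\ctr{st}{i}W \to W \to \cotr{st}{i+1}W$ exhibits $W$ as a retract of $\cotr{st}{i+1}W$, whence $\text{H}^k(W) = 0$ for $k \leqslant i$, that is, $\ctr{st}{i}W \simeq 0$.

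For the reverse inclusion, suppose $\ctr{st}{i}\textbf{R}\Gamma_{Z^{i+1}}Y \simeq 0$ for all $i$, equivalently that $\textbf{R}\Gamma_{Z^{i+1}}Y$ has no cohomology in degrees $\leqslant i$. Since $(\mathscr{U}_\Phi, \mathscr{V}_\Phi)$ is compactly generated, it is enough to check $\ho_{\D(R)}(S, Y) = 0$ as $S$ runs over a set of compact generators of the aisle $\mathscr{U}_\Phi[1]$, and for these I would take the Koszul complexes $S = K(\bar x)[-m]$ with $V(\bar x) \subseteq Z^{m+1}$. Such an $S$ is perfect, has cohomology concentrated in degrees $\leqslant m$, and, by property~(2) of Koszul complexes, all of that cohomology is supported on $V(\bar x) \subseteq Z^{m+1}$, so $S$ is $Z^{m+1}$-torsion. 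The torsion adjunction gives $\ho(S, Y) \simeq \ho(S, \textbf{R}\Gamma_{Z^{m+1}}Y)$, and this vanishes by the orthogonality axiom of the standard t-structure, because $S$ lies in degrees $\leqslant m$ while $\textbf{R}\Gamma_{Z^{m+1}}Y$ lies in degrees $\geqslant m+1$. The same computation also kills the positive shifts of $S$ (which are again Koszul complexes of the permitted form), so $Y \in \mathscr{V}_\Phi$.

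The bookkeeping with the standard t-structure and the shift by one in the filtration indices is routine; the step I expect to require the most care is the identification of $\textbf{R}\Gamma_Z$ with the right adjoint of the inclusion of $Z$-torsion complexes and the resulting adjunction isomorphism $\ho(T, \textbf{R}\Gamma_Z Y) \simeq \ho(T, Y)$ for torsion $T$, since this is exactly what links the local cohomology appearing on the right-hand side to the orthogonality condition defining $\mathscr{V}_\Phi$. If one prefers to avoid invoking compact generation in the reverse inclusion, an alternative is to argue degree by degree, filtering an arbitrary object of $\mathscr{U}_\Phi[1]$ by its standard truncations (which stay in $\mathscr{U}_\Phi[1]$); this works but needs an extra boundedness remark, so reducing to the perfect, hence bounded, Koszul generators is the cleaner route.
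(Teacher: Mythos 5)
The paper does not prove this proposition: it simply records it as a result from Alonso Tarr\'{\i}o--Jerem\'{\i}as L\'opez--Saorin \cite[3.11]{atjls}, whose own argument runs through a rather more involved analysis of truncation triangles. Your argument is a genuine, self-contained proof and it is correct, so there is no paper proof to compare against directly; what you have done is extract the clean orthogonality mechanism underlying the ATJLS statement.

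Concretely, you correctly identify the two bookkeeping facts that drive everything: that $\mathscr{V}_\Phi = (\mathscr{U}_\Phi[1])^{\perp}$ with $\mathscr{U}_\Phi[1]$ described by the shifted support condition and generated by the shifted Koszul set $\mathscr{S}_\Phi[1] = \{K(\bar x)[-m] : V(\bar x)\subseteq Z^{m+1}\}$, and that $\mathbf{R}\Gamma_Z$ is right adjoint to $\mathscr{U}_Z \hookrightarrow \mathrm{D}(R)$, so $\mathrm{Hom}(T, \mathbf{R}\Gamma_Z Y)\simeq \mathrm{Hom}(T,Y)$ for $Z$-torsion $T$. The forward direction is airtight: $\ctr{st}{i}W$ is again $Z^{i+1}$-torsion and cohomologically bounded above by $i$, hence lies in $\mathscr{U}_\Phi[1]$; the vanishing of the counit $\ctr{st}{i}W\to W$ then forces $\ctr{st}{i}W\simeq 0$ (your retract argument works, though one can get there even faster by taking $A=\ctr{st}{i}W$ in the adjunction, which kills its identity map). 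The reverse direction reduces correctly to the Koszul generators, and the degree count ($S$ in degrees $\leqslant m$ versus $\mathbf{R}\Gamma_{Z^{m+1}}Y$ in degrees $\geqslant m+1$) together with the torsion adjunction is exactly what is needed; you also correctly flag that $\mathscr{S}_\Phi[1]$ is closed under $[1]$, so that vanishing against the listed generators suffices to conclude $Y\in(\mathscr{U}_\Phi[1])^{\perp}$. This is a more conceptual and shorter route than what appears in the cited source, at the mild cost of taking the compact generation of $\mathscr{U}_\Phi$ by $\mathscr{S}_\Phi$ as a black box (which the paper has already established in Theorem~\ref{classification}).

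One small presentational remark: the paper's running line ``a t-structure is generated by $\mathscr{S}$ if $\mathscr{V}=\mathscr{S}^{\perp}$'' is slightly off under the BBD-style orthogonality $\mathrm{Hom}(\mathscr{U}[1],\mathscr{V})=0$ used here; the version you actually use, $\mathscr{V}_\Phi=(\mathscr{S}_\Phi[1])^{\perp}$, is the consistent one, and you state and apply it correctly. It would be worth making that normalization explicit if this proof were to be included.
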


\vspace{0.1cm}

\section{t-structures on the bounded derived category}

We will now discuss t-structures on the bounded derived category of a Noetherian ring. 
The following is a special case of corollary 3.12 in \cite{atjls}.

\begin{prop}
Let $R$ be a Noetherian ring, and let $(\mathscr{E}, \mathscr{F})$ be a t-structure on $\emph{D}^b(R)$. 
Then it extends to a compactly generated t-structure $(\mathscr{U}, \mathscr{V})$ on $\emph{D}(R)$.
\end{prop}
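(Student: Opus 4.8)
The plan is to build the extension explicitly from the classification of Theorem~\ref{classification}, and then to check that it restricts back to the given t-structure on $\Db(R)$.

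First I would attach to $(\mathscr{E},\mathscr{F})$ the filtration $\Phi=\{Z^i\}_{i\in\mathbb{Z}}$ defined as in the assignment $\mu$, namely $Z^i=\bigcup\{\,V(\mathfrak{a})\mid\mathfrak{a}\ \text{f.g.},\ R/\mathfrak{a}[-i]\in\mathscr{E}\,\}$; this is meaningful since every object $R/\mathfrak{a}[-i]$ lies in $\Db(R)$. Each $Z^i$ is a union of Zariski closed sets, hence specialization closed, hence a Thomason subset as $R$ is Noetherian, and $\mathscr{E}[1]\subseteq\mathscr{E}$ forces $Z^i\subseteq Z^{i-1}$, so $\Phi$ is a decreasing Thomason filtration. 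I then take $(\mathscr{U},\mathscr{V})=\eta(\Phi)=(\mathscr{U}_\Phi,\mathscr{V}_\Phi)$, which by Theorem~\ref{classification} is a compactly generated t-structure on $\D(R)$ with aisle $\mathscr{U}=\{\,X\mid\Supp\text{H}^i(X)\subseteq Z^i,\ i\in\mathbb{Z}\,\}$ and compact generators $\mathscr{S}_\Phi$.

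The next step is to reduce the statement "$(\mathscr{U},\mathscr{V})$ restricts to $(\mathscr{E},\mathscr{F})$" to the two inclusions $\mathscr{E}\subseteq\mathscr{U}$ and $\mathscr{F}\subseteq\mathscr{V}$. Granting these, for $X\in\Db(R)$ the triangle $\ctr{\mathscr{E}}{0}X\to X\to\cotr{\mathscr{E}}{1}X$ has its outer terms in $\mathscr{U}$ and $\mathscr{V}[-1]$, so by uniqueness of the truncation triangle it is the $(\mathscr{U},\mathscr{V})$-truncation triangle of $X$; hence $\ctr{\mathscr{U}}{0},\cotr{\mathscr{U}}{1}$ preserve $\Db(R)$ and restrict to $\ctr{\mathscr{E}}{0},\cotr{\mathscr{E}}{1}$, and as a t-structure is determined by its truncation functors the restriction of $(\mathscr{U},\mathscr{V})$ to $\Db(R)$ is $(\mathscr{E},\mathscr{F})$. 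To get $\mathscr{F}\subseteq\mathscr{V}=\mathscr{U}^{\perp}$ I would exploit that $\mathscr{U}$ is compactly generated by $\mathscr{S}_\Phi$: it is enough that $\ho_{\D(R)}(K(\overbar{x})[-i],Y)\simeq 0$ for every Koszul generator with $V(\overbar{x})\subseteq Z^i$ and every $Y\in\mathscr{F}$; as $K(\overbar{x})[-i]$ is a finite iterated extension of its cohomology objects, each being the finitely generated module $\text{H}^{-j}(K(\overbar{x}))$ in degree $i-j$ with support inside $V(\overbar{x})\subseteq Z^i\subseteq Z^{i-j}$, this follows — using the identity of the next paragraph together with full faithfulness of $\Db(R)\hookrightarrow\D(R)$ — from $\ho_{\Db(R)}(\mathscr{E},\mathscr{F})\simeq 0$.

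Thus everything comes down to the identity $\mathscr{E}=\{\,X\in\Db(R)\mid\Supp\text{H}^i(X)\subseteq Z^i,\ i\in\mathbb{Z}\,\}$, i.e.\ to the assertion that an arbitrary aisle on $\Db(R)$ is controlled by supports, and this is the step I expect to be the main obstacle — it is essentially the content of \cite[3.12]{atjls}. I would split it as follows. First show that $\mathscr{E}$ is stable under the standard truncations $\ctr{st}{m}$; this makes $\mathscr{E}$ recoverable, by finitely many extensions, from its "layers" $\mathscr{E}_n:=\{\,M\ \text{f.g.}\mid M[-n]\in\mathscr{E}\,\}$ inside $\text{Mod}(R)$. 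Secondly, show that each $\mathscr{E}_n$ — which is automatically closed under extensions and finite direct sums — is in fact closed under submodules, hence, being Serre, equals $\{\,M\mid\Supp M\subseteq Z^n\,\}$ for a specialization closed set, by Gabriel's classification of Serre subcategories over a Noetherian ring; dévissage of finitely generated modules by the cyclic modules $R/\mathfrak{p}$ then identifies this set with the $Z^n$ produced by $\mu$. The substantive part is establishing these two facts about $\mathscr{E}$ — above all that the module layers of a bounded t-structure on $\Db(R)$ are automatically Serre, which is where Noetherianity genuinely enters, through finiteness of associated primes — whereas the preceding reductions, with Theorem~\ref{classification} and the uniqueness of truncation triangles, are formal.
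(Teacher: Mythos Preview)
Your approach is correct and agrees with the paper's. The paper does not give an independent proof of this proposition at all; it simply records it as a special case of \cite[3.12]{atjls}. Your sketch is essentially an outline of the argument behind that citation: assign a Thomason filtration $\Phi$ via $\mu$, invoke Theorem~\ref{classification} to produce the compactly generated t-structure $(\mathscr{U}_\Phi,\mathscr{V}_\Phi)$ on $\D(R)$, and then verify restriction by proving the support-control identity $\mathscr{E}=\{\,X\in\Db(R)\mid\Supp\text{H}^i(X)\subseteq Z^i\,\}$. You have correctly isolated this last identity --- closure of $\mathscr{E}$ under the standard truncations and the Serre property of the module layers --- as the genuine content, and that is precisely what \cite[3.12]{atjls} establishes.
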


As a consequence of this and theorem \ref{classification}, we may naturally assign a unique Thomason filtration to any t-structure on $\Db(R)$.
We now introduce a condition that is necessarily satisfied by any Thomason filtration corresponding to a t-structure on $\Db(R)$, and see that for a large class of rings
 this condition is sufficient for a such t-structure to 
exist. A Thomason filtration $\{Z^i\}$ on $\Spec$ is said to satisfy the \textbf{weak cousin condition} if 
\[ \parbox[c]{3in}{\centering \text{for all} $i$, \text{for all} $\mathfrak{p} \in Z^i$, \text{and for all primes} 
$\mathfrak{q} \subsetneq \mathfrak{p}$ \text{maximal under} $\mathfrak{p}$, \text{we have that} $\mathfrak{q} \in Z^{i-1}$.}  \]
We will call such filtrations weak cousin Thomason
filtrations. Observe that the filtrations corresponding to the standard t-structure and the trivial t-structures both clearly satisfy the weak cousin condition, but the
filtration corresponding to the constant t-structure does not. The following is from \cite[4.5]{atjls}.

\begin{prop}
\label{weakcousin}
Let $R$ be a Noetherian ring, and let $\Phi = \{Z^i\}$ be a Thomason filtration on $\emph{Spec}R$ corresponding to a t-structure $(\mathscr{U}_\Phi, \mathscr{V}_\Phi)$ 
on $\emph{D}^{b}(R)$.
Then $\Phi$ satisfies the weak cousin condition.
\end{prop}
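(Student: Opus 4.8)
The plan is to establish the contrapositive one index at a time: suppose there are an integer $i$, a prime $\mathfrak{p} \in Z^i$, and a prime $\mathfrak{q} \subsetneq \mathfrak{p}$ maximal under $\mathfrak{p}$ with $\mathfrak{q} \notin Z^{i-1}$; I will produce an object $X \in \Db(R)$ whose truncation $\cotr{\Phi}{1}X$ has a non-finitely-generated cohomology module. Since the given t-structure on $\Db(R)$ is the restriction of $(\mathscr{U}_\Phi, \mathscr{V}_\Phi)$ on $\D(R)$, the truncation functors $\ctr{\Phi}{0}, \cotr{\Phi}{1}$ preserve $\Db(R)$, so this yields the required contradiction. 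I first reduce to the case where $R$ is local. Localizing at $\mathfrak{p}$ carries $\Phi$ to the Thomason filtration $\Phi' = \{\, Z^j \cap \operatorname{Spec} R_{\mathfrak{p}}\,\}$ on $\operatorname{Spec} R_{\mathfrak{p}}$, whose aisle is $\mathscr{U}_{\Phi'}$ by theorem \ref{classification}; truncation commutes with this localization, since $(\ctr{\Phi}{0}Y)_{\mathfrak{p}} \in \mathscr{U}_{\Phi'}$ follows directly from the support description of the aisle and $(\cotr{\Phi}{1}Y)_{\mathfrak{p}} \in \mathscr{U}_{\Phi'}^{\perp}$ follows from proposition \ref{coaisle} (both $\mathbf{R}\Gamma_{Z^k}$ and $\ctr{st}{k}$ commute with $(-)_{\mathfrak{p}}$). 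Hence $\Phi'$ again corresponds to a t-structure on $\Db(R_{\mathfrak{p}})$, and the failure of the weak cousin condition persists at the index $i$ with the maximal ideal of $R_{\mathfrak{p}}$ in place of $\mathfrak{p}$; so we may assume that $R$ is local with maximal ideal $\mathfrak{p} = \mathfrak{n}$.

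Now put $N = R/\mathfrak{q}$, a one-dimensional Noetherian local domain (because $\mathfrak{q}$ is maximal under $\mathfrak{n}$), and $X = N[-(i-1)] \in \Db(R)$. Since $\operatorname{depth} N = \dim N = 1$, one has $\text{H}^j_{\mathfrak{n}}(N) = 0$ for $j \ne 1$, and a \v{C}ech computation with a parameter gives $\text{H}^1_{\mathfrak{n}}(N) \cong \kappa(\mathfrak{q})/N$ with $\kappa(\mathfrak{q}) = \operatorname{Frac}(N)$; the short exact sequence $0 \to N \to \kappa(\mathfrak{q}) \to \text{H}^1_{\mathfrak{n}}(N) \to 0$ thus gives, after shifting by $-(i-1)$, a triangle
\[ \text{H}^1_{\mathfrak{n}}(N)[-i] \longrightarrow X \longrightarrow \kappa(\mathfrak{q})[-(i-1)], \]
which I claim is the $\Phi$-truncation triangle of $X$. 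Its left-hand term lies in $\mathscr{U}_\Phi$ because $\text{H}^1_{\mathfrak{n}}(N)$ is $\mathfrak{n}$-torsion, hence supported on $\{\mathfrak{n}\} \subseteq Z^i$. For the right-hand term, take $W \in \mathscr{U}_\Phi$; the hypothesis $\mathfrak{q} \notin Z^{i-1}$, together with $Z^{i-1} \supseteq Z^i \supseteq \cdots$, forces $\text{H}^m(W)_{\mathfrak{q}} = 0$ for all $m \geqslant i-1$, i.e.\ $W_{\mathfrak{q}} \in \D^{\leqslant i-2}(R_{\mathfrak{q}})$. Since $\kappa(\mathfrak{q})$ is an $R_{\mathfrak{q}}$-module, flat base change gives
\[ \ho_{\D(R)}\!\big(W,\kappa(\mathfrak{q})[-(i-1)]\big) \;\simeq\; \ho_{\D(R_{\mathfrak{q}})}\!\big(W_{\mathfrak{q}},\kappa(\mathfrak{q})[-(i-1)]\big), \]
and the latter vanishes because $\D^{\leqslant i-2}$ is left orthogonal to $\D^{\geqslant i-1}$ for the standard t-structure. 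Thus $\kappa(\mathfrak{q})[-(i-1)] \in \mathscr{U}_\Phi^{\perp} = \mathscr{V}_\Phi[-1]$, and by uniqueness of truncation triangles $\cotr{\Phi}{1}X \simeq \kappa(\mathfrak{q})[-(i-1)]$.

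Finally $\text{H}^{i-1}(\cotr{\Phi}{1}X) = \kappa(\mathfrak{q})$, the fraction field of the positive-dimensional Noetherian local domain $N$, which is not a finitely generated $R$-module; hence $\cotr{\Phi}{1}X \notin \Db(R)$, the desired contradiction. I expect the main difficulty to lie in the claim that the triangle above really computes $\cotr{\Phi}{1}X$: this rests on the explicit support description of $\mathscr{U}_\Phi$ coming from theorem \ref{classification} and on the orthogonality computation, and it is precisely the reduction to the local ring that makes it manageable — there $\Supp N$ is one-dimensional, so the local cohomology (and the quotient $\kappa(\mathfrak{q})/N$, which plays the role of ``sections away from $\mathfrak{n}$'') sits in a single cohomological degree, whereas over a general base one would instead meet $\mathbf{R}\Gamma_{Z^{i-1}}(R/\mathfrak{q})$ with cohomology spread over several degrees whose supports $\Phi$ does not control.
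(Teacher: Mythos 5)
The paper cites \cite[4.5]{atjls} for this proposition and gives no proof, so there is no in-paper argument to compare against. Your proof is a correct, self-contained argument in what I believe is the same spirit as ATJLS's: you turn the failure of the weak cousin condition at a pair $\mathfrak{q} \subsetneq \mathfrak{p}$ into a cohomological module that is too large to be finitely generated, found by truncating $R/\mathfrak{q}$. The key devices — reduction to the local ring, exploiting that $R_\mathfrak{p}/\mathfrak{q}R_\mathfrak{p}$ is a one-dimensional local domain so that its $\mathfrak{n}$-local cohomology is concentrated in a single degree, and then identifying the truncation triangle explicitly with $\text{H}^1_\mathfrak{n}(N)[-i] \to N[-(i-1)] \to \kappa(\mathfrak{q})[-(i-1)]$ — all check out, and the orthogonality verification via the extension/restriction-of-scalars adjunction and the standard t-structure on $\D(R_\mathfrak{q})$ is clean.

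Two points are stated a bit more strongly or tersely than they are argued, though neither is a real gap. First, you assert that $\Phi'$ corresponds to a t-structure on $\Db(R_\mathfrak{p})$; you don't need this full statement (and it would require more care, since not every object of $\Db(R_\mathfrak{p})$ arises as the localization of one in $\Db(R)$). What you actually need and prove is only that $(\cotr{\Phi}{1}Y)_\mathfrak{p} \simeq \cotr{\Phi'}{1}(Y_\mathfrak{p})$, and then that f.g.-ness passes to localizations, so that non-f.g. cohomology of $\cotr{\Phi'}{1}(X_\mathfrak{p})$ forces $\cotr{\Phi}{1}X \notin \Db(R)$. Second, the assertion that $\mathbf{R}\Gamma_{Z^k}$ commutes with $(-)_\mathfrak{p}$ deserves a line of justification: over a Noetherian ring, injective modules decompose as sums of $E(R/\mathfrak{r})$, injectivity is preserved by localization, and $\Gamma_Z(E(R/\mathfrak{r}))_\mathfrak{p}$ and $\Gamma_{Z\cap\Spec_\mathfrak{p}}(E(R/\mathfrak{r})_\mathfrak{p})$ are both $E(R/\mathfrak{r})$ exactly when $\mathfrak{r} \in Z$ and $\mathfrak{r} \subseteq \mathfrak{p}$ and zero otherwise. (This is more general than lemma \ref{lem1}, which only treats a minimal prime of $Z$.) With those two items tightened, the argument is complete.
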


We will now recall basic results on dualizing complexes. A complex $X \in \D(R)$ is said to be \textbf{reflexive} with respect to $D \in \D(R)$ if the morphism
\[ X \to \textbf{R}\text{Hom}_R( \textbf{R}\text{Hom}_R(X, D), D) \]
is an equivalence. Let $D \in \Db(R)$ be quasi-isomorphic to a bounded complex of injective modules, then the following are equivalent \cite[2.1]{harts}

\begin{enumerate}
\item The contravariant functor $\textbf{R}\text{Hom}_R(-, D): \Db(R)^{\textit{op}} \to \Db(R)$ is a triangulated duality quasi-inverse to itself.

\item Every finitely generated $R$-module is reflexive with respect to $D$.

\item The complex $R[0]$ is reflexive with respect to $D$.
\end{enumerate}
Such a complex $D$ is said to be a \textbf{dualizing complex} for $R$. The existence of a dualizing complex is satisfied by all rings of finite Krull dimension that are quotients of a Gorenstein ring,
see corollary 1.4 in \cite{kawa}.
In particular any regular ring of finite Krull dimension admits a dualizing complex. The following theorem is due to  Alonso Tarrio, Jeremias Lopez and Saorin \cite[6.9]{atjls}.

\begin{thm}
\label{bclassification}
Let $R$ be a Noetherian ring that admits a dualizing complex, and let $\Phi = \{Z^i\}$ be a Thomason filtration on $\emph{Spec}R$. Then
 $\mathscr{U}_\Phi$ restricts to a 
t-structure on $\emph{D}^{b}(R)$ if and only if $\Phi$ satisfies the weak cousin condition.
\end{thm}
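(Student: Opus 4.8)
The \emph{only if} direction is already available: if $\mathscr{U}_\Phi$ restricts to a t-structure $(\mathscr{E},\mathscr{F})$ on $\Db(R)$, then $(\mathscr{E},\mathscr{F})$ extends back to $(\mathscr{U}_\Phi,\mathscr{V}_\Phi)$, so by uniqueness of the Thomason filtration attached to a bounded t-structure $\Phi$ is exactly the filtration of $(\mathscr{E},\mathscr{F})$, and Proposition~\ref{weakcousin} gives the weak cousin condition. So the entire plan concerns the converse: assuming $\Phi=\{Z^i\}$ satisfies the weak cousin condition, produce for every $X\in\Db(R)$ its truncation triangle inside $\Db(R)$.

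First I would reduce to a single module. Since $\mathscr{U}_\Phi$ is compactly generated by perfect complexes (Theorem~\ref{classification}), $\mathscr{U}_\Phi\cap\Db(R)$ is an aisle in $\Db(R)$ precisely when the $\D(R)$-coreflection $\ctr{\Phi}{0}X$ lies in $\Db(R)$ for every $X\in\Db(R)$; then $\cotr{\Phi}{1}X\in\Db(R)$ too and the triangle $\ctr{\Phi}{0}X\to X\to\cotr{\Phi}{1}X$ does the job. As the weak cousin condition is stable under shifting the filtration, it is enough to prove $\ctr{\Psi}{0}X\in\Db(R)$ for every weak cousin filtration $\Psi$ and every $X\in\Db(R)$. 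The class of such $X$ is a thick triangulated subcategory stable under the standard truncation functors (octahedral axiom, together with closure of aisles under cones), and $\Db(R)$ is generated as such a subcategory by the finitely generated modules, so it suffices to treat $X=M$, a finitely generated module in degree $0$.

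Next I would bring in the dualizing complex, for two purposes. Normalise $D$ to a residual complex, so it is represented by a bounded complex of injectives $\bigoplus_{\delta(\mathfrak p)=\bullet}E_R(R/\mathfrak p)$, indexed by the codimension function $\delta\colon\Spec\to\mathbb Z$ attached to $D$, which satisfies $\delta(\mathfrak q)=\delta(\mathfrak p)-1$ whenever $\mathfrak p$ covers $\mathfrak q$; also $D$ has finite injective dimension since $\dim R<\infty$. By Proposition~\ref{coaisle} and the truncation triangle, $\ctr{\Phi}{0}M$ is built from the local cohomology modules $\text{H}^j_{Z^i}(M)$, with transition maps induced by the inclusions $Z^{i+1}\subseteq Z^i$; since each $\textbf{R}\Gamma_{Z^i}$ has cohomological dimension $\leqslant\dim R$ and $D$ has finite injective dimension, $\ctr{\Phi}{0}M$ automatically has bounded cohomology. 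For finite generation I would pass through the duality $(-)^\vee=\textbf{R}\operatorname{Hom}_R(-,D)$ on $\Db(R)$: on the residual complex $\textbf{R}\Gamma_W$ is an honest subcomplex and $(-)^\vee$ interchanges it with derived $W$-completion, and a direct check shows that the weak cousin condition on $\Phi$ is \emph{equivalent} to the assertion that the $\delta$-twisted dual filtration $(Z^\vee)^i=\{\mathfrak p : \mathfrak p\notin Z^{\delta(\mathfrak p)-i}\}$ consists of specialization closed sets. Granting this, the connecting maps in the Cousin-type complex computing $\ctr{\Phi}{0}M$ become surjective in every cohomological degree above the bottom one, so the higher local cohomology in consecutive layers cancels, and the modules left in $\text{H}^*(\ctr{\Phi}{0}M)$ are finitely generated — subquotients of $M$ and of $M^\vee$, together with their sections of the form $\Gamma_W(\cdot)$ — all finitely generated over the Noetherian ring $R$. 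Hence $\ctr{\Phi}{0}M\in\Db(R)$, which finishes the converse.

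The step I expect to be the main obstacle is precisely that finite generation. A priori $\ctr{\Phi}{0}M$ sees the full $\textbf{R}\Gamma_{Z^i}M$, and the higher pieces $\text{H}^{>0}_{Z^i}(M)$ are almost never finitely generated over $R$; for instance, for the constant filtration $\Phi=\{Z\}$, which violates the weak cousin condition, one has $\ctr{\Phi}{0}=\textbf{R}\Gamma_Z$ and $\ctr{\Phi}{0}R=\textbf{R}\Gamma_Z R\notin\Db(R)$ in general. The whole point of the weak cousin condition is to force these infinitely generated contributions to cancel in pairs, leaving only zeroth local cohomology $\Gamma_W(N)\subseteq N$, which is finitely generated. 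Carrying out this cancellation precisely — tracking, via the residual complex, exactly which $\text{H}^j_{Z^i}(M)$ occur and what the transition homomorphisms are, and verifying the required surjectivity — is the technical heart of the argument.
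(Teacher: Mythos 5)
The paper itself does not prove this theorem---it is cited directly from Alonso Tarrio, Jeremias Lopez and Saorin \cite[6.9]{atjls}---so there is no in-paper argument to measure your sketch against. Your outline is broadly in the spirit of the ATJLS proof (residual complex and codimension function $\delta$, reduction to a single module, a Cousin-type comparison with duality), and your observation that the weak cousin condition on $\Phi$ is equivalent to specialization-closure of the $\delta$-twisted dual filtration is correct; it is essentially the combinatorial input to the duality step in their argument. Nevertheless, as written the proposal has two genuine gaps. The first is the reduction to modules. You assert that $\{X\in\Db(R)\ :\ \ctr{\Psi}{0}X\in\Db(R)\}$ is a thick triangulated subcategory, justified by the ``octahedral axiom, together with closure of aisles under cones.'' Aisles are not closed under cones (only under extensions and suspension), and more fundamentally the truncation functors $\ctr{\Psi}{n}$ are not exact, so from a triangle $A\to B\to C$ with $\ctr{\Psi}{0}A$ and $\ctr{\Psi}{0}B$ in $\Db(R)$ one cannot conclude $\ctr{\Psi}{0}C\in\Db(R)$ by any formal thick-subcategory reasoning. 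The reduction to modules can be made, but it needs the explicit coaisle description of Proposition~\ref{coaisle} together with an induction on cohomological length, not the one-line justification you give.

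The second gap is the converse itself. You write that under weak cousin ``the connecting maps in the Cousin-type complex computing $\ctr{\Phi}{0}M$ become surjective in every cohomological degree above the bottom one, so the higher local cohomology in consecutive layers cancels,'' leaving only finitely generated pieces, and you then acknowledge that carrying this out is ``the technical heart of the argument.'' That heart is absent: you have not identified which local cohomology modules $\text{H}^j_{Z^i}(M)$ actually contribute, what the transition homomorphisms between them are, or why the asserted surjectivity holds. This is exactly where the weak cousin condition must be used in an essential, quantitative way (in ATJLS it is an inductive comparison against the Cousin complex of the dualizing complex, invoking Grothendieck vanishing, and it is the bulk of their section on the bounded case). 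Without it the proposal is a plausible plan rather than a proof, and there is no quick fix: every other step in your outline is preparation for this one.
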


We will now proceed to characterise those filtrations corresponding to the bounded t-structures on $\Db(R)$. First we need the observation that t-structures 
on this category will behave well with respect to ring products.

\begin{lem}
\label{tstrringprod}
Let $R, S$ be rings. We have a bijection
\begin{align*}
            \left\{ \parbox[c]{0.9in}{\centering \small
                       \emph{t-structures on} $\emph{D}^b(R\times S)$}
            \right\}
            \longleftrightarrow
            \left\{ \parbox[c]{0.9in}{\centering \small
                        \emph{t-structures on} $\emph{D}^b(R)$}
            \right\} \times
            \left\{ \parbox[c]{0.9in}{\centering \small
                        \emph{t-structures on} $\emph{D}^b(S)$}
            \right\}
\end{align*}
\end{lem}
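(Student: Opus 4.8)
The plan is to exhibit the bijection concretely by decomposing the category $\D^b(R \times S)$ as a product. First I would recall the standard fact that $\mathrm{Mod}(R \times S) \simeq \mathrm{Mod}(R) \times \mathrm{Mod}(S)$, via the two idempotents $e = (1,0)$ and $1-e = (0,1)$: every $(R\times S)$-module $M$ splits canonically as $eM \oplus (1-e)M$ with $eM$ an $R$-module and $(1-e)M$ an $S$-module, and this is an equivalence of abelian categories. Passing to chain complexes and inverting quasi-isomorphisms, this yields a triangulated equivalence $\D^b(R \times S) \simeq \D^b(R) \times \D^b(S)$ (finiteness and boundedness of homology are checked componentwise, since $\mathrm{H}^i(M) = \mathrm{H}^i(eM) \oplus \mathrm{H}^i((1-e)M)$). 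I would state this equivalence as the first step, citing it as well known or giving the one-line argument via idempotents.

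Next, given the equivalence $\mathscr{C} \simeq \mathscr{C}_1 \times \mathscr{C}_2$ of triangulated categories, I would prove the general lemma that t-structures on a product correspond to pairs of t-structures on the factors. Concretely, if $(\U_1, \V_1)$ and $(\U_2, \V_2)$ are t-structures on $\mathscr{C}_1$ and $\mathscr{C}_2$, then $(\U_1 \times \U_2,\ \V_1 \times \V_2)$ is a t-structure on $\mathscr{C}_1 \times \mathscr{C}_2$: axioms (1) and (2) of definition \ref{tstr} hold componentwise since shifts and $\mathrm{Hom}$ are computed componentwise in a product category, and the truncation triangle for $(X_1, X_2)$ is just the product of the truncation triangles of $X_1$ and $X_2$. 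Conversely, given a t-structure $(\U, \V)$ on $\mathscr{C}_1 \times \mathscr{C}_2$, I would set $\U_i$ to be the essential image of $\U$ under the $i$-th projection, or more cleanly define $\U_1 = \{X_1 \in \mathscr{C}_1 \mid (X_1, 0) \in \U\}$ and similarly $\U_2$, $\V_i$, and then check that $\U = \U_1 \times \U_2$. The key point here is that $(X_1, X_2) = (X_1, 0) \oplus (0, X_2)$ and that aisles and coaisles are closed under direct sums and summands (closure under sums is from axiom (1) plus being a triangulated subcategory closed under extensions; closure under summands follows because a summand is a retract, hence built from the object by standard idempotent-completeness-style arguments, or directly from the orthogonality characterization $\ais{}{n}{\U} = {}^\perp(\cais{}{n+1}{\V})$ in fact (2), since $\mathrm{Hom}((X_1,0), -)$ is a summand of $\mathrm{Hom}((X_1, X_2), -)$). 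Using the orthogonality description makes both the "split" and "recombine" directions essentially formal: $(X_1, X_2) \in \U \iff \mathrm{Hom}((X_1,X_2), (Y_1,Y_2)[1]) = 0$ for all $(Y_1,Y_2) \in \V \iff \mathrm{Hom}(X_1, Y_1[1]) = 0$ and $\mathrm{Hom}(X_2, Y_2[1]) = 0$ for all such pairs, which decouples.

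Finally I would observe that the two assignments — sending a pair of t-structures to the product t-structure, and sending a t-structure on the product to the pair of its restrictions to the factors — are mutually inverse, which is immediate once the characterizations above are in place. I do not expect a serious obstacle here; the only mild subtlety is being careful that "restriction to a factor" is well-defined (i.e. that $\U \cap \mathscr{C}_i$, identifying $\mathscr{C}_i$ with the subcategory of objects supported on the $i$-th component, is genuinely an aisle on $\mathscr{C}_i$ and not just a subcategory), and that the product of coaisles really is the orthogonal of the product of aisles — both of which come down to the componentwise computation of $\mathrm{Hom}$ in a product of additive categories. I would phrase the bijection so that it is manifestly natural, since later applications (reducing the classification of bounded t-structures to the case of a connected spectrum) will want to use it to restrict attention to indecomposable $R$, i.e. to $\Spec$ irreducible or at least connected.
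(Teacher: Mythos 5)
Your proposal is correct and takes the same approach as the paper: the paper's entire proof is the one-liner that $\D^b(R\times S) \simeq \D^b(R) \oplus \D^b(S)$, leaving the rest (which you spell out — idempotent decomposition, t-structures on a product category correspond to pairs via componentwise aisles/coaisles) as implicit. Your elaboration is sound and fills in exactly what the paper omits.
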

\begin{proof}
This follows from the fact that $\Db(R \times S) \simeq \Db(R) \oplus \Db(S)$.
\end{proof}

\begin{prop}
\label{boundedness}
Let $R$ be a Noetherian ring of finite Krull dimension, let $(\U, \V)$ be a t-structure on $\emph{D}^b(R)$, let $\Phi_{\U} = \{Z^i\}$ be the corresponding Thomason filtration
in $\emph{Spec}R$, and let $\emph{Spec}R = \bigsqcup_{i = 1}^n \emph{Spec}R_i$ be the decomposition of connected components. Then the following are equivalent:
\begin{enumerate}
\item The t-structure $(\U, \V)$ is bounded.

\item For all $i$, the corresponding t-structure on $\emph{D}^b(R_i)$ is non-trivial.

\item The Thomason filtration $\Phi_{\U} = \{Z^i\}$, terminates above at $\emph{Spec}R$, and below at $\emptyset$.
\end{enumerate}
\end{prop}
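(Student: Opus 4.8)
The plan is to prove (1)$\Leftrightarrow$(3) directly over $R$, and then to deduce (2)$\Leftrightarrow$(3) by passing to connected components and reducing to a combinatorial statement about weak cousin Thomason filtrations on a connected spectrum. Throughout I will use that $\Phi_{\U}=\{Z^i\}$ satisfies the weak cousin condition (Proposition~\ref{weakcousin}), that $(\U,\V)$ extends to a compactly generated t-structure $(\U_\Phi,\V_\Phi)$ on $\text{D}(R)$ with $\V_\Phi$ described by Proposition~\ref{coaisle}, and that, since $(\U,\V)$ is the restriction, an object $X\in\text{D}^b(R)$ lies in $\U$ (resp.\ $\V$) iff it lies in $\U_\Phi$ (resp.\ $\V_\Phi$). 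Finite Krull dimension enters in exactly two places: a uniform degree bound for local cohomology of $R$, and a uniform bound on the lengths of saturated prime chains.

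For (3)$\Rightarrow$(1): if $Z^i=\text{Spec}R$ for $i\le i_0$, then for $X\in\text{D}^b(R)$ with $\text{H}^j(X)=0$ for $j>N$ one has $X[m]\in\U_\Phi$ once $m\ge N-i_0$, since the constraint $\text{Supp}\,\text{H}^{j}(X[m])\subseteq Z^{j}$ is nonvacuous only for $j\le i_0$; this gives the aisle half of boundedness. If $Z^i=\emptyset$ for $i\ge i_1$, fix $X$ concentrated in cohomological degrees $\ge b$; then each $\mathbf{R}\Gamma_{Z^{i+1}}(X)$ is again concentrated in degrees $\ge b$ (left-exactness of $\Gamma_{Z}$, via the hypercohomology spectral sequence), so $\ctr{st}{i}\mathbf{R}\Gamma_{Z^{i+1}}(X[-n])\simeq 0$ for all $i$ once $n\ge i_1-1-b$ (the subset being empty when $i+1\ge i_1$), and Proposition~\ref{coaisle} gives $X[-n]\in\V_\Phi$, the coaisle half. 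For (1)$\Rightarrow$(3): apply boundedness to $X=R[0]$. From $R[m]\in\U_\Phi$ we get $\text{Spec}R=\text{Supp}R\subseteq Z^{-m}$, so the filtration terminates above. From $R[-n]\in\V_\Phi$ and Proposition~\ref{coaisle} we get $\text{H}^k_{Z^{i+1}}(R)=0$ for all $i$ and all $k\le i-n$. If the filtration did not terminate below, then $Z^{i+1}\ne\emptyset$ for every $i$; but for any nonempty specialization closed $Z$, localizing at a minimal prime $\mathfrak p$ of $Z$ gives $\mathbf{R}\Gamma_{Z}(R)_{\mathfrak p}\simeq\mathbf{R}\Gamma_{\mathfrak pR_{\mathfrak p}}(R_{\mathfrak p})$ (no prime of $Z$ lies strictly below $\mathfrak p$), whose lowest nonzero cohomology sits in degree $\text{depth}\,R_{\mathfrak p}\le\dim R_{\mathfrak p}\le d$, so $\text{H}^k_{Z^{i+1}}(R)\ne 0$ for some $k\le d$; taking $i=n+d$ contradicts the vanishing. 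Hence (1)$\Leftrightarrow$(3).

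For (2)$\Leftrightarrow$(3): write $\text{Spec}R=\bigsqcup_{i=1}^n\text{Spec}R_i$. Iterating Lemma~\ref{tstrringprod}, $(\U,\V)$ is the product of t-structures $(\U_i,\V_i)$ on $\text{D}^b(R_i)$, and unwinding the formula for $\Phi$ shows that $Z^j\cap\text{Spec}R_i$ is the Thomason filtration of $(\U_i,\V_i)$. As there are finitely many components, (3) holds iff each $\{Z^j\cap\text{Spec}R_i\}$ terminates above at $\text{Spec}R_i$ and below at $\emptyset$; and by Example~\ref{triv} (together with uniqueness of the extension to $\text{D}(R_i)$), $(\U_i,\V_i)$ is non-trivial iff its filtration is neither constant at $\text{Spec}R_i$ nor at $\emptyset$. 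Since termination at both ends forces the filtration to take the two distinct values $\emptyset$ and $\text{Spec}R_i$, it remains only to prove: for $R$ connected, Noetherian, of finite Krull dimension $d$, the weak cousin Thomason filtration $\{Z^j\}$ terminates above at $\text{Spec}R$ and below at $\emptyset$ provided it is constant at neither.

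This last point is where I expect the real work to be. Let $r$ be the finite number of minimal primes of the connected ring $R$; I claim that $Z^{j_0}\ne\emptyset$ implies $Z^{j_0-rd}=\text{Spec}R$. Step one: if $\mathfrak r\in Z^j$ and $\mathfrak p$ is a minimal prime of $R$ with $\mathfrak p\subseteq\mathfrak r$, choose a saturated chain $\mathfrak p=\mathfrak s_0\subsetneq\mathfrak s_1\subsetneq\cdots\subsetneq\mathfrak s_\ell=\mathfrak r$ (each $\mathfrak s_k$ maximal under $\mathfrak s_{k+1}$, so $\ell\le d$); iterating the weak cousin condition down this chain gives $\mathfrak p\in Z^{j-\ell}\subseteq Z^{j-d}$, hence $V(\mathfrak p)\subseteq Z^{j-d}$. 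Step two: $\text{Spec}R$ is connected iff the graph on $\mathfrak p_1,\dots,\mathfrak p_r$ with an edge $\mathfrak p_a\sim\mathfrak p_b$ whenever $V(\mathfrak p_a)\cap V(\mathfrak p_b)\ne\emptyset$ is connected. Starting from a minimal prime $\mathfrak p_1$ lying below some $\mathfrak r\in Z^{j_0}$ (so $V(\mathfrak p_1)\subseteq Z^{j_0-d}$ by step one) and walking this connected graph, whenever $V(\mathfrak p_a)\subseteq Z^{j_0-kd}$ and $\mathfrak p_a\sim\mathfrak p_b$ we may choose $\mathfrak r'\in V(\mathfrak p_a)\cap V(\mathfrak p_b)\subseteq Z^{j_0-kd}$ and apply step one to $\mathfrak p_b\subseteq\mathfrak r'$ to get $V(\mathfrak p_b)\subseteq Z^{j_0-(k+1)d}$; after at most $r$ steps every $V(\mathfrak p_a)$, hence $\text{Spec}R=\bigcup_aV(\mathfrak p_a)$, lies in $Z^{j_0-rd}$. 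Granting the claim: if $\{Z^j\}$ is not constant at $\emptyset$ then some $Z^{j_0}\ne\emptyset$, so $Z^{j_0-rd}=\text{Spec}R$ and the filtration terminates above; and if it did not terminate below then $Z^j\ne\emptyset$ for all $j$, whence (applying the claim with $j_0=j+rd$) $Z^j=\text{Spec}R$ for every $j$, contradicting that it is not constant at $\text{Spec}R$. This finishes (2)$\Leftrightarrow$(3), and with it the proposition. The main obstacle is assembling the two ingredients of the claim cleanly — propagating membership down saturated chains via weak cousin, and using connectedness of $\text{Spec}R$ to move between the components $V(\mathfrak p_a)$ of $\text{Spec}R$.
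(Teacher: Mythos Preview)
Your proof is correct, but the route differs from the paper's in two respects. The paper argues cyclically (1)$\Rightarrow$(2)$\Rightarrow$(3)$\Rightarrow$(1): for (1)$\Rightarrow$(2) it observes that a trivial component would make some nonzero object fail $\ctr{}{n}\cotr{}{-n}X\simeq X$; for (2)$\Rightarrow$(3) it simply cites \cite[4.8.3]{atjls}, which already says that a non-trivial t-structure on $\Db(R_i)$ over a finite-dimensional connected $R_i$ has filtration terminating at both ends; and (3)$\Rightarrow$(1) is essentially your argument. You instead prove (1)$\Leftrightarrow$(3) directly over $R$ --- your (1)$\Rightarrow$(3) uses a local-cohomology nonvanishing argument (depth bounds) in place of the paper's detour through (2) --- and then prove (2)$\Leftrightarrow$(3) by a self-contained combinatorial argument that reproves the content of \cite[4.8.3]{atjls}: propagating membership down saturated chains via the weak cousin condition, and walking across irreducible components via connectedness of the intersection graph of minimal primes. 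What your approach buys is self-containment (no external citation for the hard direction), at the cost of length; what the paper's buys is brevity, at the cost of invoking a result whose proof in \cite{atjls} is close in spirit to your step-one/step-two argument anyway.
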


\begin{proof} 
\quad 

(1)$\Rightarrow$(2): Consider the t-structure $(\U, \V)$ projected onto each $\Db(R_i)$, as in
lemma \ref{tstrringprod}. Towards a contradiction, suppose there is some $i$ such that this t-structure
is trivial, such that $\ctr{}{n} \simeq 0$ or $\cotr{}{n} \simeq 0$. Take some $0 \not \simeq X \in \Db(R_i)$ viewed as an object of $\Db(R)$.
Since our t-structure is bounded it follows that there exists $n$ such that $\ctr{}{n}\cotr{}{-n}X \simeq X$. This gives us a contradiction.

(2)$\Rightarrow$(3): For each $i$, consider the corresponding t-structure on $\Db(R_i)$ along with its corresponding Thomason filtration on $\text{Spec}R_i$.
Since each corresponding t-structure on $\Db(R_i)$ is non-trivial, and $R$ has finite Krull dimension, corollary 4.8.3 in \cite{atjls} tells us that each of the corresponding filtrations terminates above
at $\text{Spec}R_i$ and below at $\emptyset$ respectively. Therefore, $\Phi_{\mathscr{U}}$ must also terminate above at $\Spec$ and below at $\emptyset$.

(3)$\Rightarrow$(1): Let $Z^r = \Spec$. Then by considering theorem \ref{classification}, we see that $\ais{}{r}{\Db(R)} \subseteq \mathscr{U}$. Let $Z^s = \emptyset$, then by considering
proposition \ref{coaisle}, we see that $\cais{}{s}{\Db(R)} \subseteq \mathscr{V}$. Since objects in $\Db(R)$ are cohomologically bounded, each $X \in \Db(R)$ is contained in some 
$\ais{}{n}{\Db(R)} \cap \cais{}{-n}{\Db(R)}$. Therefore each $X$ is contained in some $\ais{}{n}{\U} \cap \cais{}{-n}{\V}$, and $(\U, \V)$ is bounded.
\end{proof}

\vspace{0.1cm}

\section{Results on truncations}

Before we prove our main results in the next section, we will need to make some observations about the connective truncation functors for t-structures on $\D(R)$, with reference
to their corresponding Thomason filtrations. This will allow us to use the infinite generation of certain local cohomology groups to contradict the restriction of some t-structures to $\Perf(R)$.

\begin{lem}
\label{lem6}
Let $R$ be a Noetherian ring, let $(\U, \V)$ be a t-structure on $\emph{D}(R)$,
let $\Phi_{\U} = \{ Z^i \}$ be the corresponding Thomason filtration on 
$\emph{Spec}R$, and let $M$ be an $R$-module with $\emph{dim}M = d$, considered as a complex in 
$\emph{D}(R)$ in degree $0$.
\begin{enumerate}
\item If $Z^i = Z$ for $i \geqslant 0$ then $\ctr{\Phi}{0}M \simeq \emph{\textbf{R}}\Gamma_ZM$.

\item If $Z^i = Z$ for $i \leqslant d$ then $\ctr{\Phi}{0}M \simeq \emph{\textbf{R}}\Gamma_ZM$.
\end{enumerate}
\end{lem}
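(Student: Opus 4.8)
The plan is to build, by hand, a distinguished triangle $\textbf{R}\Gamma_Z M \to M \to C$ whose left term lies in $\mathscr{U}_\Phi$ and whose right term lies in $\mathscr{U}_\Phi^{\perp}$, and then to appeal to the uniqueness of the truncation triangle (and to the identity $\mathscr{U}_\Phi^{\perp} = \mathscr{V}_\Phi[-1]$) to conclude $\ctr{\Phi}{0}M \simeq \textbf{R}\Gamma_Z M$. The candidate triangle is the one produced by the constant t-structure $(\mathscr{U}_Z, \mathscr{V}_Z)$, whose connective truncation functor is $\textbf{R}\Gamma_Z$; this gives such a triangle with $\textbf{R}\Gamma_Z M \in \mathscr{U}_Z$ and $C \in \mathscr{V}_Z$, and since $\mathscr{V}_Z$ is stable under shift we have $\mathscr{V}_Z = \mathscr{U}_Z^{\perp}$. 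So everything reduces to the two membership claims $\textbf{R}\Gamma_Z M \in \mathscr{U}_\Phi$ and $C \in \mathscr{U}_\Phi^{\perp}$.

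For the first claim, $\text{H}^i(\textbf{R}\Gamma_Z M) = \text{H}^i_Z(M)$ is supported in $Z$, vanishes for $i < 0$, and --- by Grothendieck vanishing, using $\dim M = d$ --- vanishes for $i > d$. In case (1) the hypothesis $Z^i = Z$ for $i \geq 0$ gives $\Supp \text{H}^i(\textbf{R}\Gamma_Z M) \subseteq Z = Z^i$ for $i \geq 0$, while for $i < 0$ there is nothing to check. In case (2) the hypothesis $Z^i = Z$ for $i \leq d$, together with the vanishing of $\text{H}^i_Z(M)$ for $i > d$, covers every $i$. So $\textbf{R}\Gamma_Z M \in \mathscr{U}_\Phi$ in both cases.

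For the second claim, I would first observe that $C$ is concentrated in cohomological degrees $\geq 0$: the long exact cohomology sequence of the triangle, using that $M$ sits in degree $0$, that $\text{H}^i_Z(M) = 0$ for $i < 0$, and that the natural inclusion $\Gamma_Z M = \text{H}^0(\textbf{R}\Gamma_Z M) \hookrightarrow M$ is injective, forces $\text{H}^i(C) = 0$ for $i < 0$. The second ingredient is a combinatorial remark valid in both cases: $Z^i \subseteq Z$ for every $i \geq 0$ --- in case (1) with equality, and in case (2) because $Z^i = Z$ for $0 \leq i \leq d$ while $Z^i \subseteq Z^d = Z$ for $i > d$, since $\Phi$ is decreasing. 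Now take $X \in \mathscr{U}_\Phi$. Applying $\ho_{\D(R)}(-, C)$ to the standard truncation triangle $\ctr{st}{-1}X \to X \to \cotr{st}{0}X$, and using that $C$ is coconnective so that $\ho_{\D(R)}$ into $C$ from $\ctr{st}{-1}X$ and from its suspension both vanish by the orthogonality axiom of the standard t-structure, yields $\ho_{\D(R)}(X, C) \cong \ho_{\D(R)}(\cotr{st}{0}X, C)$. But $\cotr{st}{0}X$ has cohomology $\text{H}^i(X)$ in degrees $i \geq 0$ and zero below, so the remark gives $\Supp \text{H}^i(\cotr{st}{0}X) \subseteq Z^i \subseteq Z$ for all $i$, that is $\cotr{st}{0}X \in \mathscr{U}_Z$; since $C \in \mathscr{U}_Z^{\perp}$, this $\ho$ group is zero. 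Hence $\ho_{\D(R)}(X, C) = 0$ for all $X \in \mathscr{U}_\Phi$, i.e. $C \in \mathscr{U}_\Phi^{\perp}$, and uniqueness of the truncation triangle finishes the proof.

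I expect the genuine content to sit in two places. The substantive input is Grothendieck vanishing for local cohomology: this is exactly what makes the case (2) hypothesis, imposed only for $i \leq d$, sufficient, and the case (2) argument really does break without the finite-dimensionality of $M$. Everything else is bookkeeping; the one point to handle with care is that the hand-built triangle really is the truncation triangle for $\mathscr{U}_\Phi$, which is precisely the content of the uniqueness clause, so nothing further is needed once the two orthogonality memberships are in place. A variant of the second step uses Proposition \ref{coaisle} to reformulate $C \in \mathscr{U}_\Phi^{\perp}$ as the vanishing of $\text{H}^j_{Z^m}(C)$ for all $j \leq m$ and all $m$, and then verifies this from $\textbf{R}\Gamma_Z C \simeq 0$ together with $\textbf{R}\Gamma_{Z^m}\textbf{R}\Gamma_Z \simeq \textbf{R}\Gamma_{Z^m}$ for $m > d$; it trades the Hom computation for the composition law of local cohomology functors.
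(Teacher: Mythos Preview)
Your argument is correct, and for the first membership ($\textbf{R}\Gamma_Z M \in \mathscr{U}_\Phi$) it essentially matches the paper: in case (2) both you and the paper invoke Grothendieck vanishing for $i>d$, and in case (1) both arguments are immediate.

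The genuine difference is in how you establish $C \in \mathscr{U}_\Phi^{\perp}$. The paper treats the two cases asymmetrically. In case (2) it observes that $Z^i \subseteq Z$ for \emph{all} $i$, whence $\mathscr{U}_\Phi \subseteq \mathscr{U}_Z$ and so $\mathscr{V}_Z \subseteq \mathscr{V}_\Phi$, making $C \in \mathscr{U}_\Phi^{\perp}$ automatic. In case (1) this containment fails for $i<0$, so the paper instead invokes the coaisle description (Proposition \ref{coaisle}) together with an external lemma \cite[4.2]{atjls} to check the required vanishing of $\ctr{st}{i}\textbf{R}\Gamma_{Z^i}C$ degree by degree. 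Your route is different: you isolate the single combinatorial fact common to both cases, namely $Z^i \subseteq Z$ for $i \geqslant 0$, and combine it with the coconnectivity of $C$ via a direct Hom computation through the standard truncation of an arbitrary $X \in \mathscr{U}_\Phi$. This buys you a unified argument that avoids both Proposition \ref{coaisle} and the cited lemma from \cite{atjls}, at the cost of a short diagram chase; the paper's case (2) argument is shorter but does not extend to case (1). The variant you sketch at the end, via Proposition \ref{coaisle}, is in fact close to what the paper actually does for case (1).
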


\begin{proof}
\begin{enumerate}

\item Since $Z \subseteq Z^i$ for all $i$, it is clear that $\U_Z \subseteq
\U$. Since, $\textbf{R}\Gamma_Z$ and $\ctr{\Phi}{0}$ are both the identity on their image, it follows that 
$\textbf{R}\Gamma_Z \circ \ctr{\Phi}{0} \simeq
\ctr{\Phi}{0} \circ \textbf{R}\Gamma_Z \simeq \textbf{R}\Gamma_Z$.
Set $\cotr{Z}{1}M = \text{cofib}(\textbf{R}\Gamma_ZM \to M)$. 
We know that  $\textbf{R}\Gamma_ZM \in \U$, and we know that the triangles corresponding to a t-structures are necessarily
unique, up to equivalence. Therefore, in order to see that the triangles
\[ \textbf{R}\Gamma_ZM \to M \to \cotr{Z}{1}M \]
and
\[ \ctr{\Phi}{0}M \to M \to \cotr{\Phi}{1}M \]
agree, it will suffice to show that $\cotr{Z}{1}M \in \cais{}{1}{\V}$. 
If we consider a shifted version of proposition \ref{coaisle} this amounts to showing that, for all $i$
\[ \ctr{st}{i}\text{\textbf{R}}\Gamma_{Z^{i}}\cotr{Z}{1}M \simeq 0. \]
When $i \geqslant 0$, then 
\[ \text{\textbf{R}}\Gamma_{Z^{i}}\cotr{Z}{1}M \simeq \text{\textbf{R}}\Gamma_{Z}\cotr{Z}{1}M \simeq 0. \]
To see the case when $i \leqslant -1$, lemma 4.2 in \cite{atjls} give us that 
$\textbf{R}\Gamma_{Z^{i}}\cotr{Z}{1}M \in \cais{}{0}{\D(R)}$, so we are done.

\item Similarly, $Z^i \subseteq Z$ for all $i$, so it is clear that $\U \subseteq
\U_Z$, and by orthogonality $\V_Z \subseteq
\V$. Since $\cotr{Z}{1}M \in \cais{}{1}{\V}$,
if we show that $\textbf{R}\Gamma_ZM \in \U$, then this will give us an equivalence of triangles, as above. 
By \cite[3.5.7(a)]{bruns} we know that 
\[\text{H}^i(\textbf{R}\Gamma_ZM) = \text{H}_Z^i(M) = 0\] for $i > d$.
Since, $Z^i = Z$ for $i \leqslant d$, it follows that 
\[ \text{Supp} \text{H}^i(\textbf{R}\Gamma_ZM) \subseteq Z^i \] for all $i$,
and therefore theorem \ref{classification} gives us that $\textbf{R}\Gamma_ZM \in \U$ and we are done.
\end{enumerate}
\end{proof}
For two Thomason filtrations $\Phi$ and $\Psi$, let the set theoretic notation $\Phi \cap \Psi$ denote its pointwise analogue on filtrations.

\begin{lem}
\label{lem7}
Let $R$ be a Noetherian ring, let $\Phi_{st}$ be the filtration corresponding to the standard t-structure $(\U_{st}, \V_{st})$ on $\emph{D}(R)$,
 and let $\Psi = \{Z_i \}$ be some Thomason filtration with corresponding t-structure $(\U_\Psi, \V_\Psi)$ on $\emph{D}(R)$. 
Then the Thomason filtration $\Phi_{st} \cap \Psi$ has corresponding aisle $\U_{st} \cap \U_\Psi$, and corresponding connective truncation functor 
$\ctr{st}{0} \circ \ctr{\Phi}{0}$.
\end{lem}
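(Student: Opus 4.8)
The plan is to verify directly that the pair $(\U_{st}\cap\U_\Psi,\,(\U_{st}\cap\U_\Psi)^\perp)$ is the t-structure associated to $\Phi_{st}\cap\Psi$ by Theorem~\ref{classification}, and then to identify its truncation functor. First I would compute the aisle. By the definition of $\U_\Phi$ in terms of supports of cohomology, an object $X$ lies in $\U_{\Phi_{st}\cap\Psi}$ exactly when $\Supp\text{H}^i(X)\subseteq (Z_{st}^i\cap Z^i)$ for all $i$, which is plainly equivalent to the conjunction of $\Supp\text{H}^i(X)\subseteq Z_{st}^i$ for all $i$ (i.e.\ $X\in\U_{st}$) and $\Supp\text{H}^i(X)\subseteq Z^i$ for all $i$ (i.e.\ $X\in\U_\Psi$). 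Hence $\U_{\Phi_{st}\cap\Psi}=\U_{st}\cap\U_\Psi$. One also checks $\Phi_{st}\cap\Psi$ is again a decreasing Thomason filtration (intersection of Thomason subsets is Thomason, and the filtration property and the termination at $\emptyset$ above degree $0$ are inherited from $\Phi_{st}$), so Theorem~\ref{classification} applies and $\U_{st}\cap\U_\Psi$ really is an aisle of a compactly generated t-structure.

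Next I would identify the truncation functor. Write $\tau:=\ctr{st}{0}\circ\ctr{\Psi}{0}$; the claim is that $\tau$ is the connective truncation of the new t-structure, i.e.\ that for every $X$ there is a triangle $\tau X\to X\to W$ with $\tau X\in\U_{st}\cap\U_\Psi$ and $W\in(\U_{st}\cap\U_\Psi)^{\perp}=\cais{}{1}{\V}_{\Phi_{st}\cap\Psi}$. That $\tau X$ lands in the aisle is the key positive step: $\ctr{\Psi}{0}X\in\U_\Psi$ by definition, and applying $\ctr{st}{0}$ only kills cohomology in positive degrees while leaving the cohomology in degrees $\le 0$ unchanged (by the explicit formula for $\text{H}^i(\ctr{st}{n-1}(-))$ recorded after the Standard t-structure example), so $\Supp\text{H}^i(\tau X)\subseteq\Supp\text{H}^i(\ctr{\Psi}{0}X)\subseteq Z^i$ for $i\le 0$ and $\text{H}^i(\tau X)=0$ for $i>0$; since $Z_{st}^i=\Spec$ for $i\le0$ this gives $\Supp\text{H}^i(\tau X)\subseteq Z_{st}^i\cap Z^i$ in all degrees, so $\tau X\in\U_{st}\cap\U_\Psi$. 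To assemble the triangle, I would use the octahedral axiom on the composite $\tau X\to \ctr{\Psi}{0}X\to X$: the cofibre of $\ctr{\Psi}{0}X\to X$ lies in $\cais{}{1}{\V}_\Psi$, the cofibre of $\tau X\to\ctr{\Psi}{0}X$ is $\cotr{st}{1}\ctr{\Psi}{0}X\in\cais{}{1}{\V}_{st}$, and octahedron produces an extension $W$ of these two. Then I would show $W\in\cais{}{1}{\V}_{\Phi_{st}\cap\Psi}$ using Proposition~\ref{coaisle}: one must check $\ctr{st}{i}\textbf{R}\Gamma_{Z_{st}^{i+1}\cap Z^{i+1}}W\simeq 0$ for all $i$; for $i\ge 0$ the outer truncation $\ctr{st}{i}$ already annihilates $\cais{}{1}{\V}_{st}$-objects (and $W$ is built from such), while for $i\le -1$ one has $Z_{st}^{i+1}\cap Z^{i+1}=Z^{i+1}$ and the condition follows from $W$'s relation to $\ctr{\Psi}{0}X$ together with Proposition~\ref{coaisle} applied to $\V_\Psi$. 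By uniqueness of truncation triangles, $\tau$ is then the connective truncation for $\Phi_{st}\cap\Psi$.

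The main obstacle I anticipate is the last step: showing the cofibre $W$ lies in the new coaisle via the local-cohomology criterion of Proposition~\ref{coaisle}. The positive-degree ($i\ge 0$) part is easy because $W$ is an extension of objects in $\cais{}{1}{\V}_{st}$ and $\ctr{st}{i}$ vanishes on those. The delicate part is $i\le -1$, where one needs that applying $\textbf{R}\Gamma_{Z^{i+1}}$ to $W$ and then $\ctr{st}{i}$ gives zero; here I would exploit that $W$ and $X$ have the same image under $\textbf{R}\Gamma$-then-$\cotr{st}{1}$ in low degrees because the correction terms $\tau X$ and (separately) $\cotr{st}{1}\ctr{\Psi}{0}X$ contribute nothing to $\ctr{st}{i}\textbf{R}\Gamma_{Z^{i+1}}(-)$ for $i\le -1$ — for the first because $\tau X\in\U_{st}$ has no cohomology above degree $0$ and local cohomology $\textbf{R}\Gamma_{Z^{i+1}}$ of a connective complex stays connective, and for the second because $\textbf{R}\Gamma$ of a $\cais{}{1}{\V}_{st}$-object, as in the proof of Lemma~\ref{lem6}(1) (citing \cite[4.2]{atjls}), lands in $\cais{}{0}{\D(R)}$ and hence is killed by $\ctr{st}{i}$ for $i\le -1$ — so the criterion for $W$ reduces to the already-known criterion for $\ctr{\Psi}{0}X$ relative to $\V_\Psi$. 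Assembling these comparisons carefully is the one place where some genuine bookkeeping with the octahedral triangles is required.
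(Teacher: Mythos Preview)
Your identification of the aisle $\U_{\Phi_{st}\cap\Psi}=\U_{st}\cap\U_\Psi$ is exactly what the paper does, and your truncation argument is essentially correct, but it takes a considerably longer route than the paper's. The paper avoids the octahedral axiom and Proposition~\ref{coaisle} entirely: having observed (as you do) that $\ctr{st}{0}$ only kills or preserves cohomology and therefore $\ctr{st}{0}(\U_\Psi)\subseteq\U_\Psi$, it notes that both $\ctr{st}{0}$ and $\ctr{\Psi}{0}$ are the identity on $\U_{st}\cap\U_\Psi$, so the image of the composite is exactly $\U_{st}\cap\U_\Psi$. Then, since each factor is a right adjoint to the inclusion of its image, the composite is right adjoint to the inclusion of $\U_{st}\cap\U_\Psi$; by uniqueness of adjoints this forces $\ctr{st}{0}\circ\ctr{\Psi}{0}$ to be the connective truncation. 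Your approach buys you a concrete description of the cofibre $W$, which is not needed here but could be useful elsewhere; the paper's approach buys brevity and avoids any bookkeeping with triangles.

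One small slip to fix in your version: in the $i\ge 0$ case you claim ``$W$ is built from'' objects of $\cais{}{1}{\V}_{st}$, but only one of the two pieces ($\cotr{st}{1}\ctr{\Psi}{0}X$) lies there; the other piece $\cotr{\Psi}{1}X$ need not. Fortunately the check is vacuous anyway, since for $i\ge 0$ the intersected filtration has $(\Phi_{st}\cap\Psi)^{i+1}=\emptyset$ and $\textbf{R}\Gamma_{\emptyset}=0$.
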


\begin{proof}
It is clear from theorem 
 \ref{classification}, that the t-structure corresponding to the filtration $\Phi_{st} \cap \Psi$ has aisle $\U_{st} \cap \U_\Psi$. 

Note that $\ctr{st}{0}$ either preserves homology or takes it 
to zero. It follows that  
$\ctr{st}{0} (\U_\Psi) 
\subseteq \U_\Psi$. Therefore, 
$\text{im}(\ctr{st}{0} \circ \ctr{\Psi}{0}) \subseteq 
\U_{st} \cap \U_\Psi$. But both 
$\ctr{st}{0}$ and $\ctr{\Psi}{0}$ are 
the identity on $\U_{st} \cap \U_\Psi$, so it follows that 
$\text{im}(\ctr{st}{0} \circ 
\ctr{\Psi}{0}) = \U_{st} \cap \U_\Psi$. Since, $\ctr{st}{0}$ and $\ctr{\Phi}{0}$ are both right adjoint to the inclusions 
of their respective images, it follows that $\ctr{st}{0} \circ 
\ctr{\Phi}{0}$ is right adjoint to the inclusion of its image $\U_{st} \cap \U_\Psi$, and so must be the corresponding truncation functor.
\end{proof}

\begin{prop}
\label{summand}
Let $R$ be a Noetherian ring, 
let $[a, b] \subseteq \mathbb{Z}$ be an interval
with $a \leqslant b$, let $\Psi = \{ Z^i \}$ be a Thomason filtration such that 
$\emph{height}(Z^i) = h$ for all $i \in [a, b]$, let $(\U_\Psi, \V_\Psi)$ be the corresponding t-structures on $\D(R)$, let $\mathfrak{p}$ be a minimal prime of
$Z_b$, and let $M$ be an $R$-module. Then for all $i \in [a, b]$,
$\emph{H}^i(\ctr{\Psi}{0}M[-a])_\mathfrak{p}$ contains $\emph{H}_{\mathfrak{p}R_\mathfrak{p}}^{i-a}(M_\mathfrak{p})$
as a direct summand.
\end{prop}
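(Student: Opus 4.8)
The strategy is to reduce to the tilting-type picture of Lemma \ref{lem6}(1) by first localizing at $\mathfrak p$, then passing to a single Thomason-subset truncation, and finally reading off local cohomology as a summand from a long exact sequence. First I would localize the whole situation at $\mathfrak p$: since $\mathfrak p$ is a minimal prime of $Z^b$ of height $h = \mathrm{height}(Z^i)$ for $i \in [a,b]$, the induced filtration $\Psi_{\mathfrak p} = \{Z^i_{\mathfrak p}\}$ on $\Spec R_{\mathfrak p}$ has the property that $Z^i_{\mathfrak p}$ contains $\mathfrak p R_{\mathfrak p}$ (the closed point) for $i \le b$ and is empty, or at least does not contain $\mathfrak p R_{\mathfrak p}$, for the relevant range above; I must check that localization commutes with the formation of $\ctr{\Psi}{0}$ in the appropriate sense, which follows because localization is exact and $\mathbf R\Gamma_{Z}$ localizes correctly. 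The point of localizing is that over $R_{\mathfrak p}$ the truncation functor near degree $0$ is controlled by $\mathbf R\Gamma_Z$ for the constant filtration $Z = V(\mathfrak p R_{\mathfrak p})$.

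Next, using Lemma \ref{lem7} I would intersect $\Psi$ with a shift of the standard filtration so as to isolate the window $[a,b]$: the filtration $\Phi'$ which equals $\Spec R$ in degrees $\le b$, equals $Z^i$ (constantly $Z^a = \dots = Z^b$ after localization, call it $Z$) in the window, and equals $\emptyset$ above, has aisle obtained by intersecting aisles, and its connective truncation is a composite $\ctr{st}{\,\cdot\,}\circ \ctr{\Psi}{\,\cdot\,}$ by Lemma \ref{lem7}. After the shift $M[-a]$, Lemma \ref{lem6}(1) (applied to the localized module $M_{\mathfrak p}$ in degree $a$, with the constant piece of the filtration being $Z = V(\mathfrak p R_{\mathfrak p})$) identifies the relevant truncation with $\mathbf R\Gamma_Z M_{\mathfrak p}$ in that range. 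Concretely, $\ctr{st}{b}\,\ctr{\Psi}{0}(M[-a])_{\mathfrak p}$ should compute, in cohomological degrees $i \in [a,b]$, the local cohomology $\mathrm H^{i-a}_{\mathfrak p R_{\mathfrak p}}(M_{\mathfrak p})$, because over the local ring the truncation at the constant Thomason subset $V(\mathfrak p R_{\mathfrak p})$ is exactly $\mathbf R\Gamma_{\mathfrak p R_{\mathfrak p}}$.

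Finally I would extract the summand claim from the octahedron/long exact sequence comparing $\ctr{\Psi}{0}M[-a]$ with $\ctr{st}{b}\ctr{\Psi}{0}M[-a]$ and $\ctr{st}{a-1}\ctr{\Psi}{0}M[-a]$ (i.e. chopping the complex into the window and what lies above it). The standard truncation triangle $\ctr{st}{a-1}X \to X \to \cotr{st}{a}X$ together with its further truncation at $b$ yields, after localizing at $\mathfrak p$, a triangle whose outer terms have vanishing cohomology in $[a,b]$ in one case and realize $\mathbf R\Gamma_{\mathfrak p R_{\mathfrak p}}(M_{\mathfrak p})$ in that range in the other; the associated long exact sequence in cohomology then splits off $\mathrm H^{i-a}_{\mathfrak p R_{\mathfrak p}}(M_{\mathfrak p})$ as a direct summand of $\mathrm H^i(\ctr{\Psi}{0}M[-a])_{\mathfrak p}$ because the connecting maps land in groups that vanish in the relevant degrees. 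I expect the main obstacle to be bookkeeping the degree shifts correctly and verifying that localization genuinely commutes with $\ctr{\Psi}{0}$ on the nose (as opposed to up to a comparison map) in the window $[a,b]$ — this is where the minimality of $\mathfrak p$ in $Z^b$ and the constancy of the height across $[a,b]$ are essential, since they guarantee that after localization the filtration really is constant on $V(\mathfrak p R_{\mathfrak p})$ throughout the window and trivial just above it.
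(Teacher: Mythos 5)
Your localize-first strategy is a reasonable alternative framing, but as written there is a genuine gap in the mechanism that produces the direct summand. The crux of the difficulty is that nothing in your outline produces a split injection and a split retraction; it only reorganizes the complex $\ctr{\Psi}{0}M[-a]$ by the standard t-structure, which does not change its cohomology in the window $[a,b]$ and hence cannot "split off" anything new.

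Two specific problems. First, you claim that after localizing and intersecting with a shifted standard filtration, Lemma \ref{lem6}(1) identifies the truncation with $\textbf{R}\Gamma_{\mathfrak{p}R_\mathfrak{p}}M_\mathfrak{p}$ in the window. But Lemma \ref{lem6}(1) requires the filtration to be constant in \emph{all} degrees $\geqslant 0$; after localizing, $Z^i_\mathfrak{p}$ equals $V(\mathfrak{p}R_\mathfrak{p})$ only for $i \in [a,b]$, and for $i > b$ it may be $\emptyset$. So the lemma does not apply to $\Psi_\mathfrak{p}$, and in fact the stronger claim you make — that $\mathrm{H}^i\bigl(\ctr{st}{b}\ctr{\Psi}{0}(M[-a])\bigr)_\mathfrak{p}$ \emph{equals} $\mathrm{H}_{\mathfrak{p}R_\mathfrak{p}}^{i-a}(M_\mathfrak{p})$ on the window — is not true in general; the proposition only asserts a summand. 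Second, the long exact sequence you invoke comes from the standard-truncation triangle applied to the single object $\ctr{\Psi}{0}M[-a]$; its outer terms do not carry the local cohomology you want, so the "connecting maps vanish" argument does not get off the ground.

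The missing idea is the sandwich construction actually used in the paper: define two auxiliary filtrations $\Omega$ (replace $Z^i$ by $Z^b$ for $i \leqslant b$) and $\Sigma$ (replace $Z^i$ by $Z^a$ for $i \geqslant a$), so that $\U_\Omega \subseteq \U_\Psi \subseteq \U_\Sigma$ and hence there are natural maps $\ctr{\Omega}{0}M \to \ctr{\Psi}{0}M \to \ctr{\Sigma}{0}M$. Lemma \ref{lem6}(1) applies to $\Sigma$ (constant above), and the intersection trick of Lemma \ref{lem7} applies to $\Omega$ (constant below once you cap with $\ctr{st}{b}$), identifying both outer truncations, after $\ctr{st}{b}$ and localization at $\mathfrak{p}$ via Lemma \ref{lem1}, with $\mathrm{H}_{\mathfrak{p}R_\mathfrak{p}}^{i-a}(M_\mathfrak{p})$. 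Crucially, the \emph{composite} $\ctr{\Omega}{0}M \to \ctr{\Sigma}{0}M$ is the canonical comparison of local cohomologies for $Z^b \subseteq Z^a$, which localizes to the identity because $\mathfrak{p}$ is minimal in both; that is what gives the split through $\mathrm{H}^i(\ctr{\Psi}{0}M)_\mathfrak{p}$. Without these two flanking filtrations and the observation that the composite is the identity after localization, there is no source of the splitting.
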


\begin{proof}
Without loss of generality we may set $a=0$. 
Define two new Thomason filtrations by
$\Omega \coloneqq \{ Z^b \ i \leqslant b \ , \ Z^i \ i > b \}$ and  
$\Sigma \coloneqq \{ Z^i \ i < 0 \ , \ Z^0 \ i \geqslant 0 \}$.
 Given that $\Omega$ and the constant filtration $\{ Z^b \}$ agree for $i \leqslant b$, we may intersect them both with the filtration
 $\{ \Spec \ i \leqslant b, \ \emptyset \ i > b \}$ 
(the standard filtration shifted), and then
 applying lemma \ref{lem7} gives us that 
 $\ctr{st}{b} \circ \ctr{\Omega}{0} \simeq \ctr{st}{b} \circ \textbf{R}\Gamma_{Z^{b}}$.
By lemma \ref{lem6} (1), $\ctr{\Sigma}{0}M \simeq \textbf{R}\Gamma_{Z^{0}}M$ so $\ctr{st}{b}\ctr{\Sigma}{0}M \simeq \ctr{st}{b}\textbf{R}\Gamma_{Z^{0}}M$.
By construction, $\U_\Omega \subseteq \U_\Psi \subseteq \U_\Sigma$, so we have a pair of natural maps
\[ \ctr{\Omega}{0}M
 \to \ctr{\Psi}{0}M \to 
 \ctr{\Sigma}{0}M .\]
Applying $\ctr{st}{b}$ we get 
\[ \ctr{st}{b}\ctr{Z^b}{0}M \simeq \ctr{st}{b} \ctr{\Omega}{0}M
 \to \ctr{st}{b} \ctr{\Psi}{0}M \to 
 \ctr{st}{b} \ctr{\Sigma}{0}M \simeq \ctr{st}{b} \ctr{Z^0}{0}M.\]
Take some $i \in [0, b]$ and some prime
$\mathfrak{p}$ minimal in $Z^b$. Since 
$\text{height}(Z^b) = \text{height}(Z^0)$
it follows that $\mathfrak{p}$ is also minimal in $Z^0$.
So
applying $\text{H}^i(-)_\mathfrak{p}$ and using lemma \ref{lem1} we get
\[\text{H}_{\mathfrak{p}R_{\mathfrak{p}}}^{i}(M_\mathfrak{p}) \to 
\text{H}^{i}(\ctr{\Psi}{0}M)_\mathfrak{p} \to 
\text{H}_{\mathfrak{p}R_{\mathfrak{p}}}^{i}(M_\mathfrak{p}) \]
such that the composition is the identity. Therefore,
$\text{H}_{\mathfrak{p}R_{\mathfrak{p}}}^{i}(M_\mathfrak{p})$
is a summand of $\text{H}^{i}(\ctr{\Psi}{0}M)_\mathfrak{p}$.
\end{proof}

\vspace{0.1cm}

\section{t-structures on the category of perfect complexes}

We will now prove the main results of the paper. 
First note that by combining proposition \ref{exttstr}, theorem \ref{classification} and the fact that $\text{Ind}(\Perf(R)) \simeq \D(R)$, we see that any t-structure 
on $\Perf(R)$ may be extended to a compactly generated t-structure on $\D(R)$. Therefore, we may naturally assign a unique Thomason filtration to any t-structure on $\Perf(R)$. 
Furthermore, as a consequence 
of theorem \ref{bclassification} we have the following.

\begin{prop}
\label{regclassification}
Let $R$ be a regular, Noetherian ring of finite Krull dimension, and let $\Phi = \{Z^i\}$ be a Thomason filtration on $\emph{Spec}R$. Then
 $\mathscr{U}_\Phi$ restricts to a 
t-structure on $\emph{Perf}(R)$ if and only if $\Phi$ satisfies the weak cousin condition, and this t-structure is bounded if and only if $\Phi$ terminates 
above at $\emph{Spec}R$
and terminates below at $\emptyset$.
\end{prop}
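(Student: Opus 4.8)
The plan is to deduce this proposition directly from Theorem \ref{bclassification} and Proposition \ref{boundedness}, the only extra ingredient being that over a regular ring of finite Krull dimension the bounded derived category and the category of perfect complexes coincide. First I would recall that for $R$ regular of finite Krull dimension, every finitely generated module has finite projective dimension, so every object of $\D^b(R)$ is quasi-isomorphic to a bounded complex of finitely generated projectives; hence $\D^b(R) = \Perf(R)$ as triangulated subcategories of $\D(R)$. (One also needs that such an $R$ admits a dualizing complex, which is noted in the excerpt just before Theorem \ref{bclassification}, since a regular ring of finite Krull dimension is in particular a finite-Krull-dimensional quotient of a Gorenstein ring — indeed of itself.)

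Given that identification, the first assertion is immediate: by Theorem \ref{bclassification}, $\mathscr{U}_\Phi$ restricts to a t-structure on $\D^b(R) = \Perf(R)$ if and only if $\Phi$ satisfies the weak cousin condition. For the second assertion, once we know $\mathscr{U}_\Phi$ restricts to a t-structure $(\mathscr{U}, \mathscr{V})$ on $\Perf(R) = \D^b(R)$, its corresponding Thomason filtration is $\Phi$ itself (the filtration is read off via the assignment $\mu$ applied to the extension, which by construction recovers $\Phi$). Then Proposition \ref{boundedness}, applied to the t-structure $(\mathscr{U},\mathscr{V})$ on $\D^b(R)$, gives the equivalence: $(\mathscr{U},\mathscr{V})$ is bounded if and only if $\Phi$ terminates above at $\Spec$ and below at $\emptyset$. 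Chaining these two equivalences yields exactly the statement.

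The only mild subtlety — and the step I would be most careful about — is the bookkeeping that the Thomason filtration attached to the restricted t-structure on $\Perf(R)$ really is the same $\Phi$ we started from, so that Proposition \ref{boundedness} can be invoked with that $\Phi$. This follows because when $\mathscr{U}_\Phi$ restricts, the restricted aisle is $\mathscr{U}_\Phi \cap \Perf(R)$, whose Ind-completion (by Proposition \ref{exttstr}, using $\text{Ind}(\Perf(R)) \simeq \D(R)$) is again $\mathscr{U}_\Phi$, and $\mu$ applied to $\mathscr{U}_\Phi$ returns $\Phi$ by the bijection in Theorem \ref{classification}. Everything else is a direct citation, so there is no real obstacle beyond assembling these pieces in the right order.
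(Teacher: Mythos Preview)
Your proof is correct and follows exactly the paper's approach: identify $\Perf(R)$ with $\D^b(R)$ via finite global dimension of regular rings, note that a regular ring of finite Krull dimension admits a dualizing complex, and then invoke Theorem~\ref{bclassification} and Proposition~\ref{boundedness}. Your extra paragraph on the bookkeeping of the associated filtration is more explicit than the paper (which simply says ``proposition~\ref{boundedness} applies''), but it is accurate and does no harm.
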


\begin{proof}
As a consequence of the finite global dimension of regular rings (see \cite[4.4]{weib1}) we have that $\Db(R) \simeq \text{Perf}(R)$.
Any regular ring of finite Krull dimension admits a dualizing complex \cite[1.4]{kawa}.
Hence, this is a particular case of theorem \ref{bclassification}, and proposition \ref{boundedness} applies.
\end{proof}
The following lemma is essentially stating that proposition 2.9 in \cite{atjls} restricts to perfect complexes.
 \begin{lem}
\label{loctstr}
Let $R$ be a Noetherian ring, let $\mathfrak{p}\subset R$ be a prime ideal and let $\Phi = \{Z^i\}$ be a Thomason filtration corresponding to a t-structure $(\U_\Phi, \V_\Phi)$
 on $\emph{Perf}(R)$. Then the Thomason filtration $\Psi = \{Z^i \cap \emph{Spec}R_{\mathfrak{p}}\}$ on $\emph{Spec}R_\mathfrak{p}$, gives a t-structure 
$(\mathscr{E}_\Psi, \mathscr{V}_\Psi)$ 
on $\emph{Perf}(R_{\mathfrak{p}})$.
\end{lem}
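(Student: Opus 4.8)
## Proof proposal

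The plan is to show that localization at $\mathfrak{p}$ is a t-exact functor sending the t-structure on $\operatorname{Perf}(R)$ to the proposed one on $\operatorname{Perf}(R_\mathfrak{p})$, and that the latter really is a t-structure because it is the restriction of a compactly generated t-structure on $\operatorname{D}(R_\mathfrak{p})$. First I would invoke the observation at the start of this section: the t-structure $(\U_\Phi, \V_\Phi)$ on $\operatorname{Perf}(R)$ extends to a compactly generated t-structure on $\operatorname{D}(R)$, which by Theorem \ref{classification} is exactly $\mathscr{U}_\Phi$ with $\Phi = \{Z^i\}$. Now proposition 2.9 in \cite{atjls} (the result the lemma is echoing) tells us that the Thomason filtration $\Psi = \{Z^i \cap \operatorname{Spec}R_\mathfrak{p}\}$ on $\operatorname{Spec}R_\mathfrak{p}$ gives a compactly generated t-structure $(\mathscr{U}_\Psi, \mathscr{V}_\Psi)$ on $\operatorname{D}(R_\mathfrak{p})$, and that the extension-of-scalars functor $(-)_\mathfrak{p} = R_\mathfrak{p}\otimes_R -: \operatorname{D}(R) \to \operatorname{D}(R_\mathfrak{p})$ is t-exact for these, i.e. $(\mathscr{U}_\Phi)_\mathfrak{p} \subseteq \mathscr{U}_\Psi$ and $(\mathscr{V}_\Phi)_\mathfrak{p} \subseteq \mathscr{V}_\Psi$. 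Concretely, this is transparent from the support description: $\operatorname{Supp}\operatorname{H}^i(X_\mathfrak{p}) = \operatorname{Supp}\operatorname{H}^i(X) \cap \operatorname{Spec}R_\mathfrak{p}$, so $X \in \mathscr{U}_\Phi$ forces $X_\mathfrak{p} \in \mathscr{U}_\Psi$, and the coaisle statement follows by orthogonality together with the fact that localization preserves compact generators (Koszul complexes go to Koszul complexes).

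Next I would observe that $(-)_\mathfrak{p}$ restricts to a functor $\operatorname{Perf}(R) \to \operatorname{Perf}(R_\mathfrak{p})$, since a bounded complex of finitely generated projectives localizes to one. So we obtain candidate subcategories $\mathscr{E}_\Psi := \mathscr{U}_\Psi \cap \operatorname{Perf}(R_\mathfrak{p})$ and $\mathscr{F}_\Psi := \mathscr{V}_\Psi \cap \operatorname{Perf}(R_\mathfrak{p})$, and the claim to prove is precisely that these form a t-structure on $\operatorname{Perf}(R_\mathfrak{p})$ — equivalently, by the restriction criterion recalled in Section 2, that the truncation functors $\ctr{\Psi}{0}$ and $\cotr{\Psi}{1}$ of the t-structure on $\operatorname{D}(R_\mathfrak{p})$ preserve $\operatorname{Perf}(R_\mathfrak{p})$. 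The key point is that these truncations are computed compatibly with localization: for $P \in \operatorname{Perf}(R)$ one has $\ctr{\Psi}{0}(P_\mathfrak{p}) \simeq (\ctr{\Phi}{0}P)_\mathfrak{p}$, by applying $(-)_\mathfrak{p}$ to the defining triangle $\ctr{\Phi}{0}P \to P \to \cotr{\Phi}{1}P$ and using t-exactness of $(-)_\mathfrak{p}$ to identify the resulting triangle with the $\Psi$-truncation triangle (using uniqueness of truncation triangles, fact (4) after Definition \ref{tstr}). Since by hypothesis $(\U_\Phi, \V_\Phi)$ restricts to $\operatorname{Perf}(R)$, the complex $\ctr{\Phi}{0}P$ is perfect, hence so is its localization $(\ctr{\Phi}{0}P)_\mathfrak{p} \simeq \ctr{\Psi}{0}(P_\mathfrak{p})$.

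Finally I would close the argument by noting that every object of $\operatorname{Perf}(R_\mathfrak{p})$ is, up to equivalence, of the form $P_\mathfrak{p}$ for some $P \in \operatorname{Perf}(R)$ — a finitely generated projective $R_\mathfrak{p}$-module is a summand of a free module, and one can clear denominators to realize any bounded complex of finitely generated projective $R_\mathfrak{p}$-modules as the localization of a bounded complex of finitely generated $R$-modules, then replace it by a perfect representative; alternatively, since $\operatorname{Perf}(R_\mathfrak{p})$ is the thick subcategory generated by $R_\mathfrak{p} = (R)_\mathfrak{p}$, and the essential image of $\operatorname{Perf}(R)$ under $(-)_\mathfrak{p}$ is a thick subcategory containing $R_\mathfrak{p}$, the two coincide. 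Therefore $\ctr{\Psi}{0}$ and $\cotr{\Psi}{1}$ preserve $\operatorname{Perf}(R_\mathfrak{p})$, and $(\mathscr{E}_\Psi, \mathscr{F}_\Psi)$ is a t-structure on $\operatorname{Perf}(R_\mathfrak{p})$ with the stated aisle. The main obstacle I anticipate is the bookkeeping in the t-exactness/compatibility step — making sure that the localized truncation triangle is genuinely identified with the $\Psi$-truncation triangle, which requires knowing both that $(-)_\mathfrak{p}$ sends $\mathscr{U}_\Phi$ into $\mathscr{U}_\Psi$ and $\mathscr{V}_\Phi$ into $\mathscr{V}_\Psi$ (the content of \cite[2.9]{atjls}, restated via Theorem \ref{classification}) — rather than anything in the perfect-complexes reduction, which is formal once that is in hand.
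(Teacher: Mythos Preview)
Your proposal is correct and follows essentially the same route as the paper: extend to $\D(R)$, invoke \cite[2.9]{atjls} for t-exactness of $(-)_\mathfrak{p}$, set $\mathscr{E}_\Psi,\mathscr{F}_\Psi$ as the intersections with $\Perf(R_\mathfrak{p})$, and verify the truncation axiom by transporting the identity $\cotr{\Psi}{n}(P_\mathfrak{p})\simeq(\cotr{\Phi}{n}P)_\mathfrak{p}$ along localization. The only cosmetic difference is in the final reduction: you argue that every object of $\Perf(R_\mathfrak{p})$ is a localization $P_\mathfrak{p}$ (via the thick-subcategory argument), whereas the paper checks the truncation only on the generator $R_\mathfrak{p}$ and then propagates using that $\cotr{\Psi}{n}$, being a left adjoint, preserves the colimit-type operations (finite sums, cones, retracts) used to build $\Perf(R_\mathfrak{p})$ from $R_\mathfrak{p}$.
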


\begin{proof}
Using proposition \ref{exttstr}, extend $(\U_\Phi, \V_\Phi)$ to a t-structure $(\U_\Phi', \V_\Phi')$ on $\text{D}(R)$. Using proposition 2.9 in \cite{atjls}, 
the  Thomason filtration $\Psi = \{Z^i \cap \text{Spec}R_{\mathfrak{p}}\}$ gives a t-structure $(\mathscr{E}_\Psi', \mathscr{V}_\Psi')$ on $\D(R_{\mathfrak{p}})$,
such that the functor $\D(R) \to \D(R_\mathfrak{p})$ is t-exact.

Set $\mathscr{E}_\Psi = \mathscr{E}_\Psi'
 \cap \Perf(R_{\mathfrak{p}})$ and 
$\mathscr{F}_\Psi = \mathscr{F}_\Psi'
 \cap \Perf(R_{\mathfrak{p}})$. We claim these are the aisle and coaisle of a
 t-structure on $\Perf(R)$.
Conditions $(1)$ and $(2)$ of definition \ref{tstr} are clearly satisfied. To see condition (3), it suffices to show that for   all $X \in \Perf(R_{\mathfrak{p}})$ and for all $n$,
 we have $\cotr{\Psi}{n}X \in \Perf(R_{\mathfrak{p}})$.

Since $\Phi$ gives a t-structure on $\Perf(R)$, we have that $\cotr{\Phi}{n}R \in \Perf(R)$. 
Given that ring localizations take perfect complexes to perfect complexes, and that $\D(R) \to \D(R_\mathfrak{p})$ is t-exact, it follows that 
\[ (\cotr{\Phi}{n}R)_{\mathfrak{p}} \simeq \cotr{\Psi}{n}R_{\mathfrak{p}} \in \Perf(R_{\mathfrak{p}}). \]
Every object $X \in \Perf(R_{\mathfrak{p}})$ can be constructed from $R_\mathfrak{p}$ with finite direct sums, mapping cones and retracts. 
The functor $\cotr{\Psi}{n}$ is a left adjoint and so preserves
colimits. Therefore $\cotr{\Psi}{n}X $ can be constructed from $\cotr{\Psi}{n}R_\mathfrak{p}$ with finite direct sums, mapping cones and retracts, and must be contained in $\Perf(R_{\mathfrak{p}})$.
\end{proof}

\begin{lem}
\label{doesnotrestrict}
Let $R$ be a singular, Noetherian ring, let $\Phi = \{Z^i\}$ be a Thomason filtration on $\emph{Spec}R$, such that there is some $r$, with $Z^r = \emptyset$, and
some $s$ with $Z^s$ containing a maximal ideal $\mathfrak{m}$ corresponding to a singular point. Then the t-structure $(\U_\Phi, \V_\Phi)$ on $\emph{D}(R)$ does not restrict to 
$\emph{Perf}(R)$.
\end{lem}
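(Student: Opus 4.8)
The plan is to exhibit an explicit perfect complex whose $\ctr{\Phi}{0}$-truncation is not perfect, thereby violating condition (3) of Definition \ref{tstr} for any t-structure on $\Perf(R)$ restricting from $(\U_\Phi, \V_\Phi)$. The natural candidate is a shift of the Koszul complex $K(\overbar{x})$ on a generating set $\overbar{x} = (x_1, \dots, x_n)$ of the maximal ideal $\mathfrak{m}$; recall $K(\overbar{x}) \in \Perf(R)$ always, and $\text{H}^0(K(\overbar{x})) = R/\mathfrak{m}$ is the residue field $k(\mathfrak{m})$. The point is that because $\mathfrak{m}$ corresponds to a singular point, $R_\mathfrak{m}$ is not regular, so $k(\mathfrak{m})$ has infinite projective dimension over $R_\mathfrak{m}$; hence $k(\mathfrak{m})$, or more precisely any complex localizing at $\mathfrak{m}$ to something with $k(\mathfrak{m})$ as a nonzero cohomology-module summand in infinitely many degrees (or even a single degree, after taking into account perfectness is a local condition), cannot be perfect.

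First I would reduce to a local statement: by Lemma \ref{loctstr}, if $(\U_\Phi, \V_\Phi)$ restricted to $\Perf(R)$, then the induced filtration $\Psi = \{Z^i \cap \Spec R_\mathfrak{m}\}$ would give a t-structure on $\Perf(R_\mathfrak{m})$. Since $\mathfrak{m} \in Z^s$ we have $V(\mathfrak{m}) \subseteq Z^s$, so $\mathfrak{m}R_\mathfrak{m} \in Z^s \cap \Spec R_\mathfrak{m}$, i.e.\ this subset is all of $\Spec R_\mathfrak{m}$; and since $Z^r = \emptyset$ the localized filtration still terminates below at $\emptyset$. So without loss of generality we may assume $(R, \mathfrak{m})$ is local singular with $\Spec R \subseteq Z^s$ for some $s$ and $Z^r = \emptyset$ for some $r$; we may also shift the filtration so that $s = 0$, i.e.\ $Z^0 = \Spec R$.

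Next I would compute the truncation. Apply $\ctr{\Phi}{0}$ to $K(\overbar{x})[-j]$ for a suitable index $j$ chosen so that, using the standard-t-structure truncations, we isolate $\text{H}^0(K(\overbar{x})) = k(\mathfrak{m})$ in the relevant degree; concretely, since $K(\overbar{x})$ has cohomology concentrated in degrees $[-n, 0]$ (cohomological convention, with $\text{H}^0 = R/\mathfrak{m}$) and the Koszul complex is self-dual, I would instead consider $K(\overbar{x})$ directly and use that $Z^0 = \Spec R$ forces $\ais{}{0}{\Db(R)} \subseteq \U_\Phi$ by Theorem \ref{classification}, so that $\ctr{st}{0} K(\overbar{x})$ (which has cohomology $k(\mathfrak{m})$ in degree $0$ only — wait, $K(\overbar{x})$ has cohomology only in degrees $\le 0$, so $\ctr{st}{0} = \mathrm{id}$ on it). The cleaner route: consider instead the module $M = R/\mathfrak{m} = k(\mathfrak{m})$ placed in degree $0$ — but this is not perfect, so I want a perfect complex mapping to it. Use $K(\overbar{x})$, which surjects onto $k(\mathfrak{m})$ in $\text{H}^0$; then apply Lemma \ref{lem6} or Proposition \ref{summand} in a range where the filtration is eventually $\emptyset$. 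Concretely: since $Z^r = \emptyset$, pick the standard filtration's role via Proposition \ref{coaisle} to show $\ctr{\Phi}{0}$ applied to $K(\overbar{x})[\text{shift}]$ agrees, after applying $\ctr{st}{0}$ and localizing, with $\emph{\textbf{R}}\Gamma_{Z^{i}}$ of a module, whose cohomology contains a copy of $k(\mathfrak{m})$ in infinitely many... no — $k(\mathfrak{m})$ is $\mathfrak{m}$-torsion so $\emph{\textbf{R}}\Gamma_{\mathfrak{m}}$ is concentrated, but it has infinite projective dimension. So the crux is: $\ctr{\Phi}{0}$ of the Koszul complex has $k(\mathfrak{m})$ (or a nonzero vector space over $k(\mathfrak{m})$) as a cohomology module in some degree, and any perfect complex $P$ with $\text{H}^j(P) \ni k(\mathfrak{m})$... that's not an obstruction by itself ($\Perf$ can have torsion cohomology). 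The real obstruction is that the truncation equals something like $R/\mathfrak{m}[0]$ itself (the whole complex, not just a cohomology module), which has infinite projective dimension and hence is not perfect — and this is exactly what the paragraph before the lemma in the excerpt says: "a truncation of the Koszul complex corresponding to the singular point will give the residue field of that point."

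So the key steps, in order: (i) localize at $\mathfrak{m}$ via Lemma \ref{loctstr}, reducing to $(R,\mathfrak{m})$ local singular with $Z^0 = \Spec R$ and $Z^r = \emptyset$ for some $r > 0$ (after shifting); (ii) show, using Theorem \ref{classification} and Proposition \ref{coaisle} (and Lemma \ref{lem6}/\ref{lem7} to identify truncation functors), that for the Koszul complex $K(\overbar{x})$ on a minimal generating set of $\mathfrak{m}$, the truncation $\ctr{\Phi}{0}$ of an appropriate shift is quasi-isomorphic to $k(\mathfrak{m})$ concentrated in a single degree; (iii) invoke that $k(\mathfrak{m})$ has infinite projective dimension over the singular local ring $R_\mathfrak{m}$ (by Auslander–Buchsbaum–Serre, regularity $\iff$ finite global dimension $\iff$ $k(\mathfrak{m})$ perfect), so $k(\mathfrak{m}) \notin \Perf(R_\mathfrak{m})$; (iv) conclude that $(\U_\Phi, \V_\Phi)$ cannot restrict to $\Perf(R)$, since otherwise $\ctr{\Phi}{0}$ would preserve perfectness.

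The main obstacle I expect is step (ii): pinning down exactly which shift of the Koszul complex to take and verifying that its $\ctr{\Phi}{0}$-truncation is genuinely $k(\mathfrak{m})$ (a single module in one degree) and not merely some complex having $k(\mathfrak{m})$ as a cohomology summand. This requires care with the filtration indices — one must use that $Z^0 = \Spec R$ controls the truncation from below (placing enough of the complex into $\U_\Phi$) while $Z^r = \emptyset$ for some $r>0$ controls it from above via Proposition \ref{coaisle} (forcing the higher cohomology to be killed), and one needs a minimal generating set so that $K(\overbar{x})$ resolves $k(\mathfrak{m})$ only in the regular case — but here the argument goes the other way: we truncate the Koszul complex itself, which is always perfect, and extract $k(\mathfrak{m})$ as the truncation, obtaining a non-perfect output. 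The bookkeeping to make "the truncation is $k(\mathfrak{m})[\ast]$" precise, likely via Lemma \ref{lem6}(1) applied with $Z = Z^0 = \Spec R$ (so $\emph{\textbf{R}}\Gamma_{\Spec R} = \mathrm{id}$) combined with a dual/standard truncation in the range above $0$ where $Z^i$ eventually vanishes, is the delicate part.
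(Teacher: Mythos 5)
You have the right global strategy---localize at $\mathfrak{m}$ via Lemma \ref{loctstr}, use the Koszul complex on a generating set of $\mathfrak{m}$, show some truncation equals the residue field $k$, and conclude since $k$ has infinite projective dimension over the singular local ring---and this matches the paper's approach. But there are two concrete errors. The assertion that after localizing "$\mathfrak{m}R_\mathfrak{m} \in Z^s \cap \text{Spec}R_\mathfrak{m}$, i.e.\ this subset is all of $\text{Spec}R_\mathfrak{m}$" is false: a nonempty specialization-closed subset of $\text{Spec}R_\mathfrak{m}$ containing the maximal ideal need not be the whole spectrum, since the maximal ideal has no proper specializations. So you cannot arrange $Z^0 = \text{Spec}R$. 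What you can arrange, writing $W^i = Z^i \cap \text{Spec}R_\mathfrak{m}$ and reindexing so the largest $i$ with $W^i \neq \emptyset$ is $i = 0$, is merely that $\mathfrak{m}R_\mathfrak{m} \in W^0$, because any nonempty specialization-closed subset of a local spectrum contains the closed point.

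The bookkeeping you flag as delicate is where the proof actually lives, and your proposed route via Lemma \ref{lem6}(1) does not apply, since the filtration is not constant above $0$ once some $Z^r = \emptyset$. The mechanism that works is this: $W^i = \emptyset$ for $i > 0$ gives $\U_\Psi \subseteq \U_{st}$ by Theorem \ref{classification}, hence $\ctr{\Psi}{-1} \simeq \ctr{\Psi}{-1}\circ\ctr{st}{-1}$; and since all cohomology of $K(\overbar{m})$ is supported on $V(\mathfrak{m}) = \{\mathfrak{m}\}$ with $\mathfrak{m} \in W^0 \subseteq W^i$ for $i \leqslant 0$, one checks $\ctr{st}{-1}K(\overbar{m}) \in \ais{\Psi}{-1}{\U}$ directly, with no further condition on $W^0$ needed. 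Combining, $\ctr{\Psi}{-1}K(\overbar{m}) \simeq \ctr{st}{-1}K(\overbar{m})$, so $\cotr{\Psi}{0}K(\overbar{m}) \simeq \cotr{st}{0}K(\overbar{m}) \simeq \text{H}^0(K(\overbar{m})) = k$. It is the \emph{coconnective} truncation that yields $k$; your initial plan to apply $\ctr{\Phi}{0}$ to $K(\overbar{m})$ would, under the normalization you (erroneously) propose, simply return $K(\overbar{m})$ itself---which is perfect and gives no contradiction---because $K(\overbar{m})$ has cohomology only in degrees $\leqslant 0$ with support $\{\mathfrak{m}\}$.
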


\begin{proof}
Towards a contradiction, suppose that $(\U_\Phi, \V_\Phi)$ restricts to $\Perf(R)$. Let $W^i = Z^i \cap \text{Spec}R_{\mathfrak{m}}$.
Using lemma \ref{loctstr}, the filtration $\Psi = \{W^i\}$ gives a t-structure $(\U_\Psi, \V_\Psi)$ on $\D(R_\mathfrak{m})$, which restricts to a t-structure 
$(\mathscr{E}_\Psi, \mathscr{F}_\Psi)$ on $\Perf(R_{\mathfrak{m}})$.
Since $Z^r = \emptyset$, we have that $W^r = \emptyset$, and since $\mathfrak{m} \in Z^s$, we have that $\mathfrak{m} \in Z^s$.
This reduces to the local case, so for the remainder of the proof set $R = R_{\mathfrak{m}}$, and let $R/\mathfrak{m} = k$ be the corresponding residue field.

Without loss of generality, we may assume that the highest $i$ such that $W^i$ is non-empty is $i=0$. Let $( \mathscr{U}_{st}, \mathscr{V}_{st})$ denote the standard t-structure on $\D(R)$.
Since $W^i = 0$ for $i > 0$, theorem \ref{classification} tells us that $\mathscr{E}_\Psi \subseteq \mathscr{U}_{st}$.
Since the truncation functors are the identity on their image we see that $\ctr{\Psi}{-1} \circ \ctr{st}{-1}
\simeq \ctr{\Psi}{-1}$. 

Since $W^0$ is closed under specialization and non-empty, and $\mathfrak{m}$ is the only maximal ideal in $R$, it is immediate that
$\mathfrak{m} \in W^0$. 
Let $\mathfrak{m} = (m_1, ... , m_n)$ be a set of generators. Consider the Koszul complex $K(\overbar{m}) \in \Perf(R) \subseteq \D(R)$. 
By assumption, $\Psi$ gives a t-structure on $\Perf(R)$ and for all $n$, we have that $\cotr{\Psi}{n} K(\overbar{m}) \in \Perf(R)$.

Using \cite[1.6.5(b)]{bruns}, we see that
\[ \text{Supp}\text{H}^i(K(\overbar{m})) \subseteq V(\mathfrak{m}) \subseteq W^i \]
for all $i \leqslant 0$.  It follows that $\ctr{st}{-1}K(\overbar{m}) \in 
\U_\Psi$. 
Therefore, $\ctr{\Psi}{-1} \ctr{st}{-1} K(\overbar{m}) \simeq \ctr{st}{-1} K(\overbar{m})$. 
Therefore
\[ \ctr{\Psi}{-1} K(\overbar{m}) \simeq \ctr{\Psi}{-1} \ctr{st}{-1} K(\overbar{m}) \simeq \ctr{st}{-1} K(\overbar{m})\]
 and  $\cotr{\Psi}{0} K(\overbar{m}) \simeq \cotr{st}{0} K(\overbar{m})$. 
Given that $ K(\overbar{m})$ has vanishing 
homology in positive degrees, we have that 
\[\cotr{\Psi}{0} K(\overbar{m}) \simeq \cotr{st}{0} K(\overbar{m}) \simeq \text{H}^0(K(\overbar{m})) \simeq k. \]
Since $k$ is the residue field corresponding to a singular point, we know that 
 $\text{proj.dim}k \\ = \infty$ \cite[4.4.16]{weib1}. It follows that $\cotr{\Psi}{0} K(\overbar{m}) \simeq k$ cannot be a perfect complex, giving us our contradiction.
\end{proof}

Now we will prove theorems $1.2$ and $1.3$.

\begin{thm}
\label{nobounded}
Let $R$ be a singular, Noetherian ring of finite Krull dimension. Then $\emph{Perf}(R)$ admits no bounded t-structure.
\end{thm}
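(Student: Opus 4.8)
The strategy is to reduce to the two types of obstruction already isolated in the excerpt, using the global structure of the Thomason filtration attached to a putative bounded t-structure. So suppose, towards a contradiction, that $\Perf(R)$ carries a bounded t-structure $(\mathscr{E}, \mathscr{F})$. By the discussion opening Section 6, this extends to a compactly generated t-structure on $\D(R)$ with a well-defined Thomason filtration $\Phi = \{Z^i\}$, and $\mathscr{U}_\Phi$ restricts to $(\mathscr{E},\mathscr{F})$ on $\Perf(R)$. The first step is to extract the global shape of $\Phi$: since $R$ is singular it has a singular point, and I would first pass to the connected component $\Spec R_j$ of $\Spec R$ containing it (using lemma \ref{tstrringprod} and $\Perf(R\times S)\simeq\Perf(R)\oplus\Perf(S)$, so that a bounded t-structure on $\Perf(R)$ restricts to a bounded t-structure on each $\Perf(R_j)$, and $R_j$ is still singular). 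Then proposition \ref{boundedness} applies on this component: boundedness forces $\Phi$ to terminate above at $\Spec R_j$ and below at $\emptyset$; in particular there is some $r$ with $Z^r = \emptyset$, and since $Z^{-N} = \Spec R_j$ for $N$ large, every maximal ideal — in particular the singular one, call it $\mathfrak m$ — lies in $Z^s$ for $s$ sufficiently negative.

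With $Z^r = \emptyset$ and $\mathfrak m \in Z^s$ for some $s$, the hypotheses of lemma \ref{doesnotrestrict} are met verbatim (after renaming $R_j$ as $R$): the lemma then says $(\mathscr{U}_\Phi, \mathscr{V}_\Phi)$ does \emph{not} restrict to $\Perf(R)$. This contradicts the assumption that our bounded t-structure came from $\Phi$, and the theorem follows. In other words, the proof is essentially the observation that \emph{any} bounded t-structure on $\Perf(R)$, for singular $R$ of finite Krull dimension, produces a Thomason filtration that simultaneously terminates at $\emptyset$ below and contains the singular point somewhere, which lemma \ref{doesnotrestrict} already rules out.

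The one point needing care is the reduction to a connected component: I need that a bounded t-structure on $\Perf(R)$ decomposes, under the equivalence $\Perf(R) \simeq \bigoplus_i \Perf(R_i)$, into a product of bounded t-structures on the $\Perf(R_i)$, and that the component-wise Thomason filtrations assemble correctly into $\Phi_{\mathscr{U}}$. Boundedness of each factor is immediate: if $X \in \Perf(R_i)$ then $X \in \ais{}{n}{\mathscr{U}}\cap\cais{}{-n}{\mathscr{V}}$ for some $n$ inside $\Perf(R)$, and intersecting with the summand $\Perf(R_i)$ preserves this. Non-triviality of each factor then follows — or can be taken directly from proposition \ref{boundedness}(2), which is stated at the level of $\D^b(R_i)$ but whose proof only uses the truncation functors, which restrict to $\Perf(R_i)$ since the t-structure does. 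Thus I would simply invoke proposition \ref{boundedness} on the component $R_i$ containing the singular point to conclude that its filtration terminates below at $\emptyset$, and then run lemma \ref{doesnotrestrict} on that component.

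\textbf{Main obstacle.} There is no deep obstacle: the substantive work has been done in lemmas \ref{loctstr} and \ref{doesnotrestrict} (the infinite projective dimension of the residue field of a singular point, realized as a truncation of a Koszul complex) and in proposition \ref{boundedness} (the shape of the filtration forced by boundedness). The only genuine care is bookkeeping around the decomposition into connected components and checking that "singular" is inherited by the relevant component — which it is, since a local ring of $R$ is singular iff it is singular as a local ring of the component containing it.
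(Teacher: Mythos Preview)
Your argument has a genuine gap at the step where you invoke proposition \ref{boundedness} to conclude that the filtration terminates below at $\emptyset$. That proposition is stated for t-structures on $\Db(R)$, and its implication $(2)\Rightarrow(3)$ is proved by citing \cite[4.8.3]{atjls}, a result about \emph{weak cousin} filtrations. The weak cousin condition is guaranteed by proposition \ref{weakcousin} only for t-structures restricting to $\Db(R)$. In the singular case $\Perf(R) \subsetneq \Db(R)$, and there is no reason a priori why a t-structure on $\Perf(R)$ should yield a weak cousin filtration --- indeed, the whole point of the theorem is that \emph{no} bounded t-structures exist on $\Perf(R)$, while (when $R$ has a dualizing complex) many do exist on $\Db(R)$. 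Your parenthetical defence (``whose proof only uses the truncation functors'') addresses the implication $(1)\Rightarrow(2)$, not $(2)\Rightarrow(3)$, which is the direction you actually need.

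The paper handles termination below by a different route, precisely to avoid the weak cousin crutch. It shows directly, via proposition \ref{summand} and the infinite generation of top local cohomology (corollary \ref{infg}), that the filtration cannot contain an interval of length exceeding $d$ on which all $Z^i$ have the same positive height: otherwise some $\ctr{\Phi}{0}R[-a]$ would have infinitely generated cohomology and so could not be perfect. Combined with finite Krull dimension and finitely many irreducible components, this forces the filtration to stabilise below at either $\emptyset$ or a height-$0$ Thomason subset; the latter is then excluded by a short Koszul-complex argument against boundedness. Only after this is lemma \ref{doesnotrestrict} invoked. In short, proposition \ref{summand} is not optional decoration --- it is doing exactly the work you tried to delegate to proposition \ref{boundedness}.
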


\begin{proof}
Towards a contradiction, suppose that $\Phi = \{Z^i\}$ is a Thomason filtration on $\Spec$ giving rise to a bounded t-structure
$(\U_\Phi, \V_\Phi)$ on $\Perf(R)$.
Since this is a bounded t-structure, there must be some $n$ such that 
$\ctr{\Phi}{n}R \simeq R$. So considering theorem \ref{classification}, we see that the same $n$ must also have that $Z^n = \Spec$. 

We claim that there can only be a finite string of non-empty $Z^i$'s of the same non-zero height.
Towards a contradiction, suppose that for all $i$
in some interval $[a, a+d]$, we have $Z^i$ is a non-empty and that 
$\text{height}(Z^i) = h \geqslant 1$ . Take some prime $\mathfrak{p}$, minimal in $Z^{a+d}$.
Proposition \ref{summand} tells us that 
$\text{H}^{h+a}(\ctr{\Phi}{0}R[-a])_{\mathfrak{p}}$ contains $\text{H}_{\mathfrak{p}R_{\mathfrak{p}}}^h(R_{\mathfrak{p}})$ as a direct summand, and is therefore infinitely generated over $R$
by corollary \ref{infg}. It follows that $\ctr{\Phi}{0}R[-a]$ cannot be a perfect complex, and $(\U, \V)$ cannot give a t-structure on $\Perf(R)$. 

Since $R$ is Noetherian and has finitely many irreducible components, it follows from the above claim that $\Phi$ must terminate below at $\emptyset$ or a Thomason subset of height $0$.
If $\Phi$ terminated below at some non-empty Thomason subset $Z^n$, then for all $I=(\overbar{x})$, with $V(I) \subset Z^n$,
we would have that $\ctr{\Phi}{n}K(\overbar{x}) \simeq K(\overbar{x})$ for all $n$, contradicting boundedness. Therefore, $\Phi$ must terminate below at $\emptyset$
and above at $\Spec$. Lemma \ref{doesnotrestrict} shows that this t-structure cannot exist on $\Perf(R)$, giving us our contradiction.
\end{proof}

\begin{thm}
\label{singconn}
Let $R$ be a singular, irreducible, Noetherian ring of finite Krull dimension $d$, and let $(\U, \V)$ be a t-structure on $\emph{Perf}(R)$. 
Then either $\U = 0$ or $\U = \emph{Perf}(R)$.
\end{thm}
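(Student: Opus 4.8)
The plan is to use the Thomason filtration $\Phi_{\U} = \{Z^i\}$ attached to the t-structure $(\U, \V)$ via Proposition~\ref{exttstr} and Theorem~\ref{classification}, and to show that the only admissible filtrations are the two constant ones $\{\Spec\}$ and $\{\emptyset\}$, which by Theorem~\ref{classification} correspond to $\U = \Perf(R)$ and $\U = 0$ respectively. Since $R$ is irreducible, $\Spec R$ is irreducible with a unique generic point $\mathfrak{p}_0$ of height $0$, so the height of any non-empty Thomason subset is determined, and the filtration is a decreasing chain of Thomason subsets. The first reduction is to observe that $\Phi_{\U}$ cannot contain a \emph{finite, non-constant} plateau of positive height: if there were an interval $[a, a+d]$ on which $Z^i$ is non-empty with $\text{height}(Z^i) = h \geqslant 1$ (note $d$ is the Krull dimension, so any plateau of positive height has length at most $d$ before it must strictly shrink or be forced constant), then Proposition~\ref{summand} applied to $M = R$ shows $\text{H}^{h+a}(\ctr{\Phi}{0}R[-a])_{\mathfrak{p}}$ contains the local cohomology module $\text{H}_{\mathfrak{p}R_{\mathfrak{p}}}^{h}(R_{\mathfrak{p}})$ as a direct summand for $\mathfrak{p}$ minimal in $Z^{a+d}$, and this is infinitely generated (as in the proof of Theorem~\ref{nobounded}, via the singularity), so $\ctr{\Phi}{0}R[-a] \notin \Perf(R)$ — contradicting that the t-structure restricts to $\Perf(R)$.

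Next I would rule out the remaining possibilities for a non-constant filtration. Because $R$ is irreducible there are only finitely many possible heights ($0$ through $d$), so the filtration is eventually constant at the top at some $Z^r$, and eventually constant at the bottom at some $Z^s$ with $Z^r \supseteq Z^s$; the previous paragraph forces every $Z^i$ of positive height to occur for only finitely many $i$ unless the filtration is constant there, which combined with the decreasing condition means: either $\Phi_{\U}$ terminates above at $\Spec$ and below at $\emptyset$, or $\Phi_{\U}$ is constant (at $\Spec$, at $\emptyset$, or at some proper non-empty Thomason subset $Z$ of positive height), or it terminates above at a proper non-empty $Z$. A constant filtration at a proper non-empty $Z$ of positive height is excluded by the same local-cohomology argument (take $M = R$, $a$ arbitrary, any minimal prime of $Z$), and one constant at $\Spec$ or at $\emptyset$ gives exactly the two allowed conclusions $\U = \Perf(R)$ or $\U = 0$. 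If $\Phi_{\U}$ terminates above at $\Spec$ and below at $\emptyset$, then it contains some $Z^s = \emptyset$ and some $Z^t = \Spec$; since $R$ is singular, $\Spec R$ contains a maximal ideal $\mathfrak{m}$ corresponding to a singular point (singularity means the singular locus is non-empty, and it is Zariski closed, hence contains a closed point), so $\mathfrak{m} \in Z^t$, and Lemma~\ref{doesnotrestrict} says this t-structure does not restrict to $\Perf(R)$ — a contradiction. The one case left is $\Phi_{\U}$ terminating above at a proper non-empty Thomason $Z$ that is not all of $\Spec$: here I would note that a Thomason subset terminating above at $Z \subsetneq \Spec$ has $Z$ not containing the generic point $\mathfrak{p}_0$ (as $\Spec R$ is irreducible, the only specialization-closed set containing $\mathfrak{p}_0$ is $\Spec R$ itself), so $\text{height}(Z) \geqslant 1$, and if the filtration is constant at $Z$ we are in the excluded case above, while if it strictly decreases below to $\emptyset$ (it cannot decrease to anything else of the same or smaller positive height without being constant, by finiteness of heights and the plateau argument) we land again in the Lemma~\ref{doesnotrestrict} situation with $Z^s = \emptyset$ and $\mathfrak{m} \in Z \subseteq Z^t$ for suitable $t$.

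I expect the main obstacle to be the careful bookkeeping of the finitely many shapes a decreasing Thomason filtration can take when all non-generic heights are of bounded multiplicity — making sure the case analysis is exhaustive and that in every non-constant case one can exhibit a concrete perfect complex (either $R$ itself, shifted, or a Koszul complex $K(\overbar{m})$ on a singular maximal ideal) whose truncation fails to be perfect. In particular one must be precise that "singular" plus "irreducible" guarantees a singular \emph{closed point}: the singular locus is a non-empty closed subset of the irreducible $\Spec R$, hence is $V(\mathfrak{a})$ for some proper ideal $\mathfrak{a}$, and any maximal ideal containing $\mathfrak{a}$ is a singular point whose local ring has infinite global dimension, so its residue field has infinite projective dimension — which is exactly the input Lemma~\ref{doesnotrestrict} needs. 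Once the combinatorics of the filtration is pinned down, each individual contradiction is immediate from Proposition~\ref{summand}, Corollary~\ref{infg}, or Lemma~\ref{doesnotrestrict}, so the proof should be short modulo that case check.
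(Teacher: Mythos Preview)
Your approach matches the paper's: pass to the Thomason filtration, use Proposition~\ref{summand} together with Corollary~\ref{infg} to bound the length of any plateau of positive height, use irreducibility to see that $\Spec$ is the only Thomason subset of height $0$, conclude that the filtration must terminate at $\Spec$ or $\emptyset$ at both ends, and then apply Lemma~\ref{doesnotrestrict} to eliminate the one non-constant possibility. Two minor corrections: the infinite generation of $\text{H}^h_{\mathfrak{p}R_\mathfrak{p}}(R_\mathfrak{p})$ is Corollary~\ref{infg} and does not use singularity at all; and the singular locus of a general Noetherian ring need not be Zariski closed, but you do not need this, since if $R_\mathfrak{p}$ is not regular then neither is $R_\mathfrak{m}$ for any maximal $\mathfrak{m} \supseteq \mathfrak{p}$ (localizations of regular local rings are regular), which already supplies the singular closed point required by Lemma~\ref{doesnotrestrict}.
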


\begin{proof}
In the case that $d=0$, then $\Spec$ contains a single point. The only possible filtrations are the standard one up to shifting, and the two trivial ones. Lemma \ref{doesnotrestrict}
contradicts the standard filtration, so we are done.

Let $d \geqslant 1$. Let $\Phi_\U = \{Z^i\}$ be the Thomason filtration on $\Spec$ corresponding to $(\U, \V)$. 
Similarly to the proof of theorem \ref{nobounded}, observe that 
there can only be a finite string of $Z^i$'s of the same non-zero height as otherwise $\ctr{\Phi}{0}R[-a]$ would have infinitely generated homology for some $a$.

Since $\Spec$ is irreducible, the only Thomason subset of height $0$ is $\Spec$. Since $R$ has finite Krull dimension and we know from above that there can be 
only be a finite string of $Z^i$'s of the same non-zero height, we  see
 that $\Phi$ must terminate below at $\emptyset$ or $\Spec$, and $\Phi$ must terminate above at $\emptyset$ or $\Spec$. Lemma \ref{doesnotrestrict}
contradicts the possibility that $\Phi$ terminates below at $\emptyset$ and above at $\Spec$. Therefore, $\Phi$ must be either the constant filtration
at $\emptyset$ or the constant filtration at $\Spec$.
\end{proof}

\vspace{0.1cm}

\appendix

\section{Local cohomology}

Let $R$ be a commutative, Noetherian ring. Let $Z \subseteq \text{Spec}R$ be a Thomason subset and $M$ a $R$-module. Let $E(R/\mathfrak{p})$ be the indecomposable 
injective $R$-module corresponding to some prime ideal $\mathfrak{p} \in \Spec$ (see appendix A in \cite{24hours}).
Define a submodule 
$\Gamma_ZM \subseteq M$ by the exact sequence
\[ 0 \to \Gamma_ZM \to M \to \prod_{\mathfrak{p} \notin Z} M_{\mathfrak{p}}. \]
We have the following facts about $\Gamma_Z$ (see \cite[section 9]{bens}):
\begin{enumerate}
\item The assignment $M \mapsto \Gamma_ZM$ is a left exact, additive functor on $\text{Mod}(R)$

\item For some ideal $I \subset R$, denote $\Gamma_I \coloneqq \Gamma_{V(I)}$, and we have
\[ \Gamma_IM = \{ x \in M \ | \ I^nx = 0 \ \text{for some} \ n \in \mathbb{Z} \}. \]

\item For arbitrary Thomason subset $Z$
\[ \Gamma_ZM = \bigcup_{V(I) \subseteq Z} \Gamma_{I}M. \]

\item For each $\mathfrak{p} \in \text{Spec}R$
\[ \Gamma_Z(E(R/\mathfrak{p})) =\begin{cases}
    E(R/\mathfrak{p}), & \mathfrak{p} \in Z.\\
    0, & \text{otherwise}.
  \end{cases} \]
\end{enumerate}
Let $\textbf{R}\Gamma_Z: \text{D}(R) \to \text{D}(R)$ be the right derived
functor of $\Gamma_Z$. For each $X \in \text{D}(R)$, we define the \textbf{local cohomology} modules 
of $X$ with respect to $Z$ to be 
\[ \text{H}_Z^i(X) \coloneqq \text{H}^{i}(\textbf{R}\Gamma_ZX). \]
When $Z = V(I)$ just write 
$\text{H}_I^i(X)$ for $\text{H}_Z^i(X)$.
When $(R,\mathfrak{m})$ is a Noetherian, local ring, and $M$ a finitely generated $R$-module we 
have the following well known facts:
\begin{enumerate}

\item \label{1} $\text{H}_\mathfrak{m}^i(M)$ is Artinian for $i \geqslant 0$ \cite[3.5.4(a)]{bruns}.

\item \label{2} When $\text{dim}M \geqslant 1$, set $N = M/\text{H}_\mathfrak{m}^0(M)$. Then 
$\text{dim}M = \text{dim}N$, $\text{depth}N \geqslant 1$, and 
$\text{H}_\mathfrak{m}^i(N) = \text{H}_\mathfrak{m}^i(M)$ for $i \geqslant 1$ \cite[2.1.7]{brod}.

\item $\text{H}_\mathfrak{m}^i(M)$ can only be non-zero in the range
 $\text{depth}M \leqslant i \leqslant \text{dim}M$ \cite[3.5.7(a)]{bruns}.

\end{enumerate}
For $x \in R$ let $\mu_M^x : M \to M$ denote multiplication by $x$. 

\begin{lem}
\label{lem5}
Let $(R,\mathfrak{m})$ be a Noetherian, local ring and let M be a finitely generated $R$-module.
Then there exists $x \in \mathfrak{m}$ such that $\emph{ker}\mu_M^x$ has finite length.
\end{lem}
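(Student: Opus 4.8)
The statement to prove is Lemma~\ref{lem5}: for $(R,\mathfrak{m})$ Noetherian local and $M$ finitely generated, there is $x \in \mathfrak{m}$ with $\ker\mu_M^x$ of finite length. First I would recall that $\ker\mu_M^x$ having finite length is equivalent to it being supported only at $\mathfrak{m}$, i.e. $(\ker\mu_M^x)_\mathfrak{q} = 0$ for every non-maximal prime $\mathfrak{q}$. Since localization is exact, $(\ker\mu_M^x)_\mathfrak{q} = \ker(\mu_{M_\mathfrak{q}}^{x/1})$, so it suffices to choose $x$ acting as a nonzerodivisor on $M_\mathfrak{q}$ for every non-maximal prime $\mathfrak{q}$ in $\Supp M$. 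Equivalently, $x$ should avoid every associated prime of $M$ except possibly $\mathfrak{m}$ itself.

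The key step is then prime avoidance. Because $R$ is Noetherian and $M$ is finitely generated, $\mathrm{Ass}(M)$ is a finite set of primes. Let $\mathfrak{p}_1,\dots,\mathfrak{p}_t$ be those associated primes of $M$ that are strictly contained in $\mathfrak{m}$ (discard $\mathfrak{m}$ if it happens to be associated). If this list is empty then $M$ already has finite length and any $x$ — say $x = 0$ — works trivially. Otherwise, each $\mathfrak{p}_i \subsetneq \mathfrak{m}$, so $\mathfrak{m} \not\subseteq \bigcup_{i=1}^t \mathfrak{p}_i$ by the prime avoidance lemma, and we may pick $x \in \mathfrak{m} \setminus \bigcup_i \mathfrak{p}_i$. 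Then $x$ lies outside every associated prime of $M$ other than $\mathfrak{m}$, hence is a nonzerodivisor on $M_\mathfrak{q}$ for each $\mathfrak{q} \neq \mathfrak{m}$ in $\Supp M$ (recall $\mathrm{Ass}(M_\mathfrak{q})$ consists of the localizations of the associated primes of $M$ contained in $\mathfrak{q}$). Therefore $\ker\mu_M^x$ is supported only at $\mathfrak{m}$, and being a finitely generated module over a Noetherian local ring supported at the maximal ideal, it has finite length.

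I do not expect a serious obstacle here; this is a standard commutative algebra argument. The only point requiring a little care is the bookkeeping around $\mathfrak{m}$ itself possibly being an associated prime of $M$ — one must be sure to exclude it from the union before invoking prime avoidance, since $\mathfrak{m}$ is of course not something one can avoid inside $\mathfrak{m}$. With that caveat handled, the conclusion that a finitely generated module over $(R,\mathfrak{m})$ with support contained in $\{\mathfrak{m}\}$ has finite length is routine (it is killed by a power of $\mathfrak{m}$ and finitely generated, hence a finite-length module).
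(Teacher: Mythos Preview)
Your proof is correct. The paper's proof reaches the same conclusion by a slightly different packaging: it passes to $N = M/\text{H}_\mathfrak{m}^0(M)$, observes via \cite[2.1.7]{brod} that $\text{depth}\,N \geqslant 1$, chooses $x \in \mathfrak{m}$ that is $N$-regular, and then notes $\ker\mu_M^x \subseteq \text{H}_\mathfrak{m}^0(M)$, which is Artinian. Underneath, this is the same element you produce --- an $N$-regular element is precisely one avoiding $\text{Ass}(N)$, and $\text{Ass}(N)$ is exactly the set of non-maximal associated primes of $M$ --- so the two arguments pick the same $x$. Your version is more self-contained (it uses only associated primes, support, and prime avoidance, with no appeal to local cohomology), while the paper's version fits naturally into the local-cohomology facts (1) and (2) it has just listed and will use again in the subsequent proposition.
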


\begin{proof} 
When $\text{dim}M = 0$ then $M$ is Artinian and any $x$ annhilating the whole module works. 
When $\text{dim}M \geqslant 1$ then set $N = M/\text{H}_\mathfrak{m}^0(M)$. By (\ref{2}) above, 
we can choose some $x \in \mathfrak{m}$ that is $N$-regular. It follows that 
$\text{ker}\mu_M^x \subseteq \text{H}_\mathfrak{m}^0(M)$ and must be Artinian, by (\ref{1}),
and therefore have finite length.
\end{proof} 

\noindent The following proof is based on \cite{put1} with some alterations.

\begin{prop}
\label{prop4}
Let $(R,\mathfrak{m},k,E)$ be a Noetherian, local ring, let $M$ a finitely generated non-zero $R$-module 
with $\emph{dim}M = r \geqslant 1$. Then $\emph{H}_\mathfrak{m}^r(M)$ is infinitely generated over $R$.
\end{prop}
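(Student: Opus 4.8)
The plan is to reduce to the case of a module of depth $\geqslant 1$ and then argue by induction on $r = \dim M$, using a carefully chosen element $x \in \mathfrak{m}$ to set up a long exact sequence in local cohomology. First I would dispose of the torsion: replacing $M$ by $N = M/\mathrm{H}_\mathfrak{m}^0(M)$ changes neither $\dim M$ nor $\mathrm{H}_\mathfrak{m}^i(M)$ for $i \geqslant 1$ (fact (\ref{2}) in the appendix), so without loss of generality $\mathrm{depth}\, M \geqslant 1$, and in particular we may pick $x \in \mathfrak{m}$ which is $M$-regular, using Lemma \ref{lem5} (or directly, since $\mathfrak{m}$ is not an associated prime). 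The short exact sequence $0 \to M \xrightarrow{x} M \to M/xM \to 0$ gives a long exact sequence
\[ \cdots \to \mathrm{H}_\mathfrak{m}^{r-1}(M/xM) \to \mathrm{H}_\mathfrak{m}^r(M) \xrightarrow{x} \mathrm{H}_\mathfrak{m}^r(M) \to \mathrm{H}_\mathfrak{m}^r(M/xM) \to \cdots \]
and since $\dim M/xM = r - 1$ we have $\mathrm{H}_\mathfrak{m}^r(M/xM) = 0$ by fact (3), so multiplication by $x$ is surjective on $\mathrm{H}_\mathfrak{m}^r(M)$.

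The crux is then: if $\mathrm{H}_\mathfrak{m}^r(M)$ were finitely generated, then $x$ surjective on it forces $\mathrm{H}_\mathfrak{m}^r(M) = 0$ by Nakayama (as $x \in \mathfrak{m}$). So it suffices to show $\mathrm{H}_\mathfrak{m}^r(M) \neq 0$, i.e. that $\mathrm{H}_\mathfrak{m}^i(M)$ is nonzero at the top of its range $i = \dim M$. This is a standard fact — Grothendieck non-vanishing — and I would prove it by induction on $r$: the base case $r = 1$ uses that $\mathrm{H}_\mathfrak{m}^0(M/xM) = M/xM \neq 0$ (as $x \in \mathfrak{m}$, Nakayama again) injects, via the connecting map, into $\mathrm{H}_\mathfrak{m}^1(M)$ — wait, one must check the connecting map $\mathrm{H}_\mathfrak{m}^0(M/xM) \to \mathrm{H}_\mathfrak{m}^1(M)$ is injective, which holds because $\mathrm{H}_\mathfrak{m}^0(M) \xrightarrow{x} \mathrm{H}_\mathfrak{m}^0(M)$ is surjective onto $\mathrm{H}_\mathfrak{m}^0(M) = 0$ (depth $\geqslant 1$), hence the previous term in the sequence vanishes. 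For $r \geqslant 2$, the same long exact sequence reads $\mathrm{H}_\mathfrak{m}^{r-1}(M)\xrightarrow{x}\mathrm{H}_\mathfrak{m}^{r-1}(M) \to \mathrm{H}_\mathfrak{m}^{r-1}(M/xM) \to \mathrm{H}_\mathfrak{m}^r(M) \xrightarrow{x} \mathrm{H}_\mathfrak{m}^r(M)$, and since $\dim M/xM = r-1 \geqslant 1$, by induction $\mathrm{H}_\mathfrak{m}^{r-1}(M/xM) \neq 0$; if $\mathrm{H}_\mathfrak{m}^r(M)$ were finitely generated it would be $0$ by the Nakayama argument above, forcing $x$ to be surjective on $\mathrm{H}_\mathfrak{m}^{r-1}(M)$, hence (Nakayama on the finitely-generated-free part — but $\mathrm{H}^{r-1}_\mathfrak{m}(M)$ need not be finitely generated) — so instead I phrase it as: either $\mathrm{H}_\mathfrak{m}^r(M) \ne 0$ and we must then show it is infinitely generated, or $\mathrm{H}_\mathfrak{m}^r(M) = 0$, which is excluded by Grothendieck non-vanishing proved independently.

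To keep the logic clean I would actually combine the two points into a single induction: prove by induction on $r \geqslant 1$ that $\mathrm{H}_\mathfrak{m}^r(M)$ is nonzero and not finitely generated, for $M$ finitely generated of dimension $r$. Reduce to $\mathrm{depth}\,M \geqslant 1$, choose $M$-regular $x \in \mathfrak{m}$. The long exact sequence gives surjectivity of $x$ on $\mathrm{H}_\mathfrak{m}^r(M)$; were this module finitely generated, Nakayama gives $\mathrm{H}_\mathfrak{m}^r(M) = 0$. It remains to rule out $\mathrm{H}_\mathfrak{m}^r(M) = 0$. When $r = 1$, $\mathrm{H}_\mathfrak{m}^0(M) = 0$ gives an injection $0 \neq M/xM = \mathrm{H}_\mathfrak{m}^0(M/xM) \hookrightarrow \mathrm{H}_\mathfrak{m}^1(M)$. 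When $r \geqslant 2$, the inductive hypothesis applied to $M/xM$ (dimension $r-1 \geqslant 1$) gives that $\mathrm{H}_\mathfrak{m}^{r-1}(M/xM)$ is infinitely generated, and the long exact sequence yields an exact $\mathrm{H}_\mathfrak{m}^{r-1}(M) \to \mathrm{H}_\mathfrak{m}^{r-1}(M/xM) \to \mathrm{H}_\mathfrak{m}^r(M)$; if $\mathrm{H}_\mathfrak{m}^r(M) = 0$ then $\mathrm{H}_\mathfrak{m}^{r-1}(M) \twoheadrightarrow \mathrm{H}_\mathfrak{m}^{r-1}(M/xM)$, but the image of $x$ on $\mathrm{H}_\mathfrak{m}^{r-1}(M)$ lands in the kernel of the connecting map, so $\mathrm{H}_\mathfrak{m}^{r-1}(M/xM)$ is a quotient of $\mathrm{H}_\mathfrak{m}^{r-1}(M)/x\,\mathrm{H}_\mathfrak{m}^{r-1}(M)$; combined with $\mathrm{H}_\mathfrak{m}^{r-1}(M)$ being Artinian when... no. The cleanest route: $\mathrm{H}_\mathfrak{m}^r(M) = 0$ is impossible because $\dim M = r$ means some minimal prime $\mathfrak{q}$ of $\mathrm{Supp}\,M$ has $\dim R/\mathfrak{q} = r$, and a standard dévissage reduces to $M = R/\mathfrak{q}$ a domain, where $\mathrm{H}_\mathfrak{m}^r \neq 0$ follows since $\dim = \mathrm{depth}$ need not hold but the top local cohomology of any $r$-dimensional module is nonzero (Grothendieck).

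\emph{The main obstacle} will be handling the non-vanishing $\mathrm{H}_\mathfrak{m}^r(M) \neq 0$ without circularity — the infinite-generation claim and the non-vanishing claim are entangled, and the induction on $M/xM$ must be organized so that the weaker statement (non-vanishing) is available before the stronger one (infinite generation) is used. Everything else — the regular element, the long exact sequence, and the Nakayama step turning "surjective multiplication by $x \in \mathfrak{m}$ on a finitely generated module" into "the module is zero" — is routine.
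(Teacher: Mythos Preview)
Your base case $r=1$ is exactly the paper's. Your inductive step, however, diverges: you pick an arbitrary $M$-regular $x\in\mathfrak{m}$, observe that $x$ acts surjectively on $\text{H}_\mathfrak{m}^r(M)$ since $\text{H}_\mathfrak{m}^r(M/xM)=0$, and conclude by Nakayama that finite generation would force $\text{H}_\mathfrak{m}^r(M)=0$. This is correct and slick, but it leaves you needing $\text{H}_\mathfrak{m}^r(M)\neq 0$, i.e.\ Grothendieck non-vanishing. If you are willing to cite that, you are done and your proof is shorter than the paper's. Your attempt to fold the non-vanishing into the same induction, though, does not close: when $\text{H}_\mathfrak{m}^r(M)=0$ the sequence only gives $\text{H}_\mathfrak{m}^{r-1}(M/xM)\cong \text{H}_\mathfrak{m}^{r-1}(M)/x\,\text{H}_\mathfrak{m}^{r-1}(M)$, and nothing prevents a quotient of an Artinian (non--finitely-generated) module from still being non--finitely-generated. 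You correctly diagnosed this as the obstacle.

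The paper's fix is to choose $x$ more carefully. Using that $\text{H}_\mathfrak{m}^{r-1}(M)$ is Artinian, its Matlis dual is finitely generated, so Lemma~\ref{lem5} supplies $x\in\mathfrak{m}$ for which multiplication by $x$ on the dual has finite-length kernel; dualizing back, $C\coloneqq\text{coker}\bigl(x:\text{H}_\mathfrak{m}^{r-1}(M)\to\text{H}_\mathfrak{m}^{r-1}(M)\bigr)$ has finite length. With this $x$ (also taken $M$-regular after the depth reduction), the long exact sequence reads
\[
0 \to C \to \text{H}_\mathfrak{m}^{r-1}(M/xM) \to \text{H}_\mathfrak{m}^{r}(M) \xrightarrow{x} \text{H}_\mathfrak{m}^{r}(M) \to 0,
\]
and now $\ell(C)<\infty$ together with $\ell\bigl(\text{H}_\mathfrak{m}^{r-1}(M/xM)\bigr)=\infty$ (induction) forces $\ell\bigl(\text{H}_\mathfrak{m}^{r}(M)\bigr)=\infty$ directly, giving non-vanishing and infinite generation in one stroke. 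This Matlis-duality step is exactly the missing idea that would make your combined induction go through; without it you must import non-vanishing from outside.
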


\begin{proof}
We know from \cite[3.5.4(a)]{bruns} that $\text{H}_\mathfrak{m}^r(M)$ is Artinian so it suffices to show that 
$\ell(\text{H}_\mathfrak{m}^r(M)) = \infty$. We proceed by induction on $r$.

Let $r=1$. By \cite[2.1.7]{brod} we may assume that $\text{depth}(M) \geqslant 1$ so 
$\text{H}_\mathfrak{m}^0(M)=0$. Take $M$-regular $x \in \mathfrak{m}$. Since $\text{dim}M/xM=0$
we see that $\text{H}_\mathfrak{m}^0(M/xM)=M/xM$ and $\text{H}_\mathfrak{m}^1(M/xM)=0$. Therefore, 
applying $\text{H}_\mathfrak{m}^*(-)$ to the short exact sequence
\[ 0 \to M \xrightarrow[]{\mu_M^x} M \to M/xM \to 0 \]
gives us
\[ 0 \to M/xM \to \text{H}_\mathfrak{m}^1(M) \to \text{H}_\mathfrak{m}^1(M) \to 0 .\]
If $\ell(\text{H}_\mathfrak{m}^1(M)$ was finite, it would then follow that 
\[ \ell(M/xM)= \ell(\text{H}_\mathfrak{m}^1(M)) - \ell(\text{H}_\mathfrak{m}^1(M)) = 0 \]  
and that $M/xM=0$. By Nakayama's lemma, we would then get that $M=0$, which is a contradiction. Therefore, 
$\ell(\text{H}_\mathfrak{m}^1(M))=\infty$ and $\text{H}_\mathfrak{m}^1(M)$ is infinitely generated.

Now assume the result for $r=s$, and let $M$ have $r=s+1$. Since $\text{H}_\mathfrak{m}^i(M)$
is Artinian, Matlis duality gives us that $\text{H}_\mathfrak{m}^i(M)^\vee = \text{Hom}(\text{H}_\mathfrak{m}^i(M), E)$ is
finitely generated.

By lemma \ref{lem5} we get $x \in \mathfrak{m}$ such that 
$\text{ker}\mu_{\text{H}_\mathfrak{m}^i(M)^\vee}^x$ has finite length. Applying
 $\text{Hom}(-,E)$ and invoking Matlis duality we get that
$C=\text{coker}\mu_{\text{H}_\mathfrak{m}^i(M)}^x$ also has finite length. 

Again, by \cite[2.1.7]{brod} we may assume that $\text{depth}(M) \geqslant 1$, so it follows that $x$ is 
necessarily $M$-regular. Note, $\text{dim}M/xM=s$ so $\text{H}_\mathfrak{m}^{s+1}(M/xM)=0$
and by the induction hypothesis $\ell(\text{H}_\mathfrak{m}^s(M/xM))=\infty$. Again,
applying $\text{H}_\mathfrak{m}^*(-)$ to the short exact sequence
\[ 0 \to M \xrightarrow[]{\mu_M^x} M \to M/xM \to 0 \]
gives us
\[ 0 \to C \to \text{H}_\mathfrak{m}^s(M/xM) \to \text{H}_\mathfrak{m}^{s+1}(M) \to \text{H}_\mathfrak{m}^{s+1}(M) \to 0 .\]
We have that $\ell(C)<\infty$ and $\ell(\text{H}_\mathfrak{m}^s(M/xM))=\infty$, so 
$\ell(\text{H}_\mathfrak{m}^{s+1}(M))=\infty$ and $\text{H}_\mathfrak{m}^{s+1}(M)$ is 
infinitely generated.
\end{proof}

\begin{cor}
Let $R$ be a Noetherian ring, let $\mathfrak{p} \in \emph{Spec}R$ with 
$\emph{height}(\mathfrak{p}) = h \geqslant 1$. Then $\emph{H}_{\mathfrak{p}}^h(R)$ is infinitely generated over $R$.
\end{cor}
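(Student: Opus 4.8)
The plan is to reduce the statement to the local case and then invoke Proposition \ref{prop4}. First I would pass to the local ring $R_{\mathfrak{p}}$, which is Noetherian and local with maximal ideal $\mathfrak{p}R_{\mathfrak{p}}$. Since localization is exact and commutes with local cohomology in the sense that $\text{H}_{\mathfrak{p}}^h(R)_{\mathfrak{p}} \cong \text{H}_{\mathfrak{p}R_{\mathfrak{p}}}^h(R_{\mathfrak{p}})$ (local cohomology localizes well because $\mathbf{R}\Gamma_{V(\mathfrak{p})}$ can be computed by a Čech/Koszul complex on generators of $\mathfrak{p}$, and localization is flat), it suffices to prove that $\text{H}_{\mathfrak{p}R_{\mathfrak{p}}}^h(R_{\mathfrak{p}})$ is infinitely generated over $R_{\mathfrak{p}}$ — this will force $\text{H}_{\mathfrak{p}}^h(R)$ to be infinitely generated over $R$, since a finitely generated $R$-module has finitely generated localizations.

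Next, I would observe that $\dim R_{\mathfrak{p}} = \text{height}(\mathfrak{p}) = h \geqslant 1$, so $R_{\mathfrak{p}}$ is a Noetherian local ring of dimension exactly $h$, and $M = R_{\mathfrak{p}}$ is a finitely generated nonzero module over it with $\dim M = h$. This is precisely the hypothesis of Proposition \ref{prop4} with $r = h$, which gives that $\text{H}_{\mathfrak{p}R_{\mathfrak{p}}}^h(R_{\mathfrak{p}})$ is infinitely generated over $R_{\mathfrak{p}}$. Combining with the previous paragraph finishes the proof.

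The main obstacle — really the only nontrivial point — is justifying the compatibility of local cohomology with localization, i.e. that $\mathbf{R}\Gamma_{V(\mathfrak{p})}(R)_{\mathfrak{p}} \simeq \mathbf{R}\Gamma_{V(\mathfrak{p}R_{\mathfrak{p}})}(R_{\mathfrak{p}})$, and more basically that the height of the ideal used to define the Thomason subset is preserved under localization so that the dimension count $\dim R_{\mathfrak{p}} = h$ is correct. Both are standard facts about Noetherian rings, and one can cite the Čech complex description of $\mathbf{R}\Gamma_I$ together with flatness of localization; I would phrase this as "local cohomology commutes with flat base change" and point to the appendix's discussion of $\mathbf{R}\Gamma_Z$ or to \cite{bruns}.
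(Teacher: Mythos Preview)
Your proposal is correct and follows essentially the same route as the paper: localize at $\mathfrak{p}$, use the identification $\text{H}_{\mathfrak{p}}^h(R)_{\mathfrak{p}} \cong \text{H}_{\mathfrak{p}R_{\mathfrak{p}}}^h(R_{\mathfrak{p}})$ together with the fact that localizations of finitely generated modules are finitely generated, and then apply Proposition~\ref{prop4} to the local ring $R_{\mathfrak{p}}$ of dimension $h$. The paper phrases it as a proof by contradiction but the content is identical; your extra remarks on justifying the base-change identity via the \v{C}ech/Koszul description are fine and in fact anticipate the argument the paper gives more generally in Lemma~\ref{lem1}.
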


\begin{proof}
Towards a contradiction, suppose that $\text{H}_{\mathfrak{p}}^h(R)$ was finitely generated as an 
$R$-module. Then $\text{H}_{\mathfrak{p}}^h(R)_{\mathfrak{p}}$ would be finitely generated as
an $R_{\mathfrak{p}}$-module. The Noetherian, local ring $(\mathfrak{p}R, \mathfrak{p})$ has $\text{dim}R_\mathfrak{p} = h$ and
\[ \text{H}_{\mathfrak{p}}^h(R)_{\mathfrak{p}} = 
\text{H}_{\mathfrak{p}R_{\mathfrak{p}}}^h(R_{\mathfrak{p}}) \]
which is infinitely generated by proposition \ref{prop4}. This gives us our contradiction.
\end{proof}

\begin{lem}
\label{lem1}
Let $R$ be a Noetherian ring, let $Z \subseteq \emph{Spec}R$ be a non-empty, Thomason
subset, let $\mathfrak{p}$ be a minimal prime of $Z$, and let $M$ an $R$-module. Then
\[ \emph{H}_Z^i(M)_\mathfrak{p} \simeq
 \emph{H}_{\mathfrak{p}R_\mathfrak{p}}^i(M_\mathfrak{p}). \]
\end{lem}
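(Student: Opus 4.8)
The plan is to reduce the statement to the key fact that $\mathfrak{p}$ being a minimal prime of $Z$ means that, after localizing at $\mathfrak{p}$, the Thomason subset $Z$ collapses to $V(\mathfrak{p}R_\mathfrak{p}) = \{\mathfrak{p}R_\mathfrak{p}\}$, so that $\Gamma_Z$ localizes to $\Gamma_{\mathfrak{p}R_\mathfrak{p}}$. More precisely, I would first recall that localization at $\mathfrak{p}$ identifies $\operatorname{Spec}R_\mathfrak{p}$ with the set of primes $\mathfrak{q} \subseteq \mathfrak{p}$, and that under this identification $Z \cap \operatorname{Spec}R_\mathfrak{p}$ consists of the primes $\mathfrak{q} \in Z$ with $\mathfrak{q} \subseteq \mathfrak{p}$. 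Since $\mathfrak{p}$ is a \emph{minimal} prime of $Z$ — i.e. $\operatorname{height}(\mathfrak{p}) = \operatorname{height}(Z)$ and $\mathfrak{p}\in Z$ — and $Z$ is specialization closed, no prime strictly contained in $\mathfrak{p}$ can lie in $Z$ (such a prime would have strictly smaller height, contradicting minimality, using that $Z$ being specialization closed forces $V(\mathfrak q)\subseteq Z$ once $\mathfrak q\in Z$). Hence $Z \cap \operatorname{Spec}R_\mathfrak{p} = \{\mathfrak{p}R_\mathfrak{p}\} = V(\mathfrak{p}R_\mathfrak{p})$.

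Next I would establish the module-level statement $(\Gamma_Z M)_\mathfrak{p} \simeq \Gamma_{\mathfrak{p}R_\mathfrak{p}}(M_\mathfrak{p})$ for any $R$-module $M$, and more importantly that this persists after taking injective resolutions. The clean way is to use property (3) in the appendix, $\Gamma_Z M = \bigcup_{V(I)\subseteq Z}\Gamma_I M$, together with the fact that localization is exact and commutes with the filtered colimit; one checks that on $R_\mathfrak{p}$-modules the ideals $I R_\mathfrak{p}$ with $V(IR_\mathfrak{p})\subseteq Z\cap\operatorname{Spec}R_\mathfrak{p}=V(\mathfrak pR_\mathfrak p)$ are cofinal among those appearing, giving $(\Gamma_Z M)_\mathfrak{p}\simeq \Gamma_{\mathfrak p R_\mathfrak p}(M_\mathfrak p)$. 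Alternatively, and perhaps more transparently for the derived statement, use property (4): $\Gamma_Z$ is determined by its effect on indecomposable injectives $E(R/\mathfrak{q})$, namely $\Gamma_Z E(R/\mathfrak q) = E(R/\mathfrak q)$ if $\mathfrak q\in Z$ and $0$ otherwise. Since $E(R/\mathfrak q)_\mathfrak p$ equals $E_{R_\mathfrak p}(R_\mathfrak p/\mathfrak q R_\mathfrak p)$ when $\mathfrak q\subseteq\mathfrak p$ and vanishes otherwise, and since for $\mathfrak q\subseteq\mathfrak p$ we have $\mathfrak q\in Z \iff \mathfrak q R_\mathfrak p\in V(\mathfrak p R_\mathfrak p) \iff \mathfrak q=\mathfrak p$, the functors $\Gamma_Z(-)_\mathfrak p$ and $\Gamma_{\mathfrak p R_\mathfrak p}((-)_\mathfrak p)$ agree on all injectives.

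Finally I would pass to derived functors and cohomology. Take an injective resolution $M\to I^\bullet$ over $R$. Localization is exact and sends injective $R$-modules to injective $R_\mathfrak p$-modules, so $M_\mathfrak p\to I^\bullet_\mathfrak p$ is an injective resolution over $R_\mathfrak p$. By the previous paragraph, $(\Gamma_Z I^\bullet)_\mathfrak p \simeq \Gamma_{\mathfrak p R_\mathfrak p}(I^\bullet_\mathfrak p)$ as complexes of $R_\mathfrak p$-modules. Taking $i$-th cohomology, and using that localization commutes with cohomology, yields
\[ \mathrm{H}_Z^i(M)_\mathfrak p = \mathrm{H}^i(\Gamma_Z I^\bullet)_\mathfrak p = \mathrm{H}^i\big((\Gamma_Z I^\bullet)_\mathfrak p\big) = \mathrm{H}^i\big(\Gamma_{\mathfrak p R_\mathfrak p}(I^\bullet_\mathfrak p)\big) = \mathrm{H}_{\mathfrak p R_\mathfrak p}^i(M_\mathfrak p), \]
which is the claim. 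The main obstacle is the bookkeeping in the middle step: one must be careful that localization genuinely commutes with $\Gamma_Z$ at the level of complexes (not just up to some comparison map), which is why routing the argument through the behavior on indecomposable injectives — where everything is explicit via property (4) — is the safest path; the minimality hypothesis on $\mathfrak p$ enters exactly at the point of identifying which injectives survive localization.
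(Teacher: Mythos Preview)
Your proof is correct and follows the same overall architecture as the paper's: both take an injective resolution $M\to I^\bullet$, use that localization at $\mathfrak p$ is exact and preserves injectives over a Noetherian ring, and reduce the derived statement to the module-level identity $(\Gamma_Z N)_\mathfrak p\simeq\Gamma_{\mathfrak pR_\mathfrak p}(N_\mathfrak p)$, with the minimality hypothesis entering exactly to force $Z\cap\operatorname{Spec}R_\mathfrak p=\{\mathfrak pR_\mathfrak p\}$.

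The one genuine difference is in how that module-level identity is verified. The paper argues directly from the defining exact sequence $0\to\Gamma_Z M\to M\to\prod_{\mathfrak q\notin Z}M_\mathfrak q$: it localizes this sequence, compares it with the analogous sequence for $\Gamma_{\mathfrak pR_\mathfrak p}$, and finishes with a short element chase. You instead route the comparison through the Matlis structure theory of injectives, using property~(4) to check that $(\Gamma_Z(-))_\mathfrak p$ and $\Gamma_{\mathfrak pR_\mathfrak p}((-)_\mathfrak p)$ agree on each $E(R/\mathfrak q)$, and hence on all injectives. Your route is arguably cleaner for the derived statement, since it directly verifies agreement on the objects actually used (injectives) and avoids any element-level computation; the paper's route is more elementary in that it does not invoke the decomposition of injectives into indecomposables. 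Either way the content is the same.
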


\begin{proof}
Let $M \to E_\bullet$ be an injective resolution of $A$-modules. 
Since $- \otimes R_\mathfrak{p}$ takes injective
$R$-mdodules to injective $R_\mathfrak{p}$-modules, 
$M_\mathfrak{p} \to E_\bullet \otimes R_\mathfrak{p}$
is an injective resolution of $R_\mathfrak{p}$-modules. The $R$-module $R_\mathfrak{p}$ is flat so 
\[ \text{H}_Z^i(M)_\mathfrak{p} = \text{H}^{i}(\Gamma_Z(E_\bullet))_\mathfrak{p} 
\simeq \text{H}^{i}(\Gamma_Z(E_\bullet)_\mathfrak{p}) \]
and 
\[  \text{H}_{\mathfrak{p}R_\mathfrak{p}}^i(M_\mathfrak{p}) = 
\text{H}^{i}(\Gamma_{\mathfrak{p}R_\mathfrak{p}}(E_\bullet \otimes R_\mathfrak{p})). \]
So it suffices to show that 
$\Gamma_Z(M)_\mathfrak{p} \simeq \Gamma_{\mathfrak{p}R_\mathfrak{p}}(M_\mathfrak{p})$.
Recall, $\Gamma_Z(M)$ is given by the exact sequence
\[ 0 \to \Gamma_Z(M) \to M \to \prod_{\mathfrak{q} \notin Z} M_{\mathfrak{q}}. \]
Applying $- \otimes R_\mathfrak{p}$ we get 
\[ 0 \to \Gamma_Z(M)_\mathfrak{p}  \to M_\mathfrak{p} \xrightarrow[]{f} 
\Big( \prod_{\mathfrak{q} \notin Z} M_{\mathfrak{q}} \Big)_\mathfrak{p}. \]
The condition that $\mathfrak{q} \notin V(\mathfrak{p}R_\mathfrak{p})$, is equivalent to the condition that
$\mathfrak{q} \subsetneq \mathfrak{p}$, and when this is satisfied then $(M_\mathfrak{p})_\mathfrak{q}
\simeq M_\mathfrak{q}$. Therefore we get that 
$\Gamma_{\mathfrak{p}R_\mathfrak{p}}(M_\mathfrak{p})$ is given by the exact sequence
\[ 0 \to \Gamma_{\mathfrak{p}R_\mathfrak{p}}(M_\mathfrak{p}) 
\to M_\mathfrak{p} \xrightarrow[]{g} \prod_{\mathfrak{q} \subset \mathfrak{p}} 
M_{\mathfrak{q}} . \]
The requirement that $\mathfrak{p}$ is a minimal prime of $Z$ implies that 
$\{\mathfrak{q} \ | \ \mathfrak{q} \subsetneq \mathfrak{p}\} \subseteq 
\{ \mathfrak{q} \ | \ \mathfrak{q} \notin Z\}$.
For some $\mathfrak{q}
\subsetneq \mathfrak{p}$,
any $x \in R \smallsetminus \mathfrak{p}$ acts invertibly on $M_{\mathfrak{q}}$. 
Therefore, any such $x$ also acts invertibly on $\prod_{\mathfrak{q} \subsetneq \mathfrak{p}} 
M_{\mathfrak{q}}$. Applying the universal property of ring localizations to the natural projection gives us a map 
\[ \Big( \prod_{\mathfrak{q} \notin Z} M_{\mathfrak{q}} \Big)_\mathfrak{p}
\to \prod_{\mathfrak{q} \subsetneq \mathfrak{p}} 
M_{\mathfrak{q}}. \]
This map factors $g$ through $f$ giving us
$\Gamma_Z(M)_\mathfrak{p} \subseteq \Gamma_{\mathfrak{p}R_\mathfrak{p}}(M_\mathfrak{p})$.
To see the other inclusion, elements of $\Gamma_{\mathfrak{p}R_\mathfrak{p}}(M_\mathfrak{p})$
are of the form $\frac{a}{r}$ with $a \in M$, $r \in R \smallsetminus \mathfrak{p}$, such that
$\mathfrak{p}^n \frac{a}{r} = 0$ for some $n$. 
This implies that $\mathfrak{p}^na = 0$, and since 
$\mathfrak{p} \in Z$, we see that $a \in \Gamma_Z(M)$. 
Therefore $\frac{a}{r} \in \Gamma_Z(M)_\mathfrak{p}$,
and 
$\Gamma_Z(M)_\mathfrak{p} \supseteq \Gamma_{\mathfrak{p}R_\mathfrak{p}}(M_\mathfrak{p})$.
\end{proof}

\begin{cor}
\label{infg}
Let $R$ be a Noetherian ring, and let $Z \subseteq \emph{Spec}R$ be a non-empty, Thomason
subset with $h = \emph{height}(Z) \geqslant 1$. 
Then $\emph{H}_{Z}^h(R)$ is infinitely generated over $R$.
\end{cor}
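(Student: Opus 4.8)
The plan is to reduce to a local statement about $\mathrm{H}^h_{\mathfrak{p}R_{\mathfrak{p}}}(R_{\mathfrak{p}})$ and then invoke the results already established. First I would observe that since $Z$ is a non-empty Thomason subset with $\mathrm{height}(Z) = h \geqslant 1$, by definition of height there exists a prime $\mathfrak{p}$ with $V(\mathfrak{p}) \subseteq Z$ and $\mathrm{height}(\mathfrak{p}) = h$; moreover such a $\mathfrak{p}$ is a minimal prime of $Z$ in the sense defined earlier, since these exist for any non-empty Thomason subset. Towards a contradiction, suppose $\mathrm{H}^h_Z(R)$ were finitely generated over $R$. Then its localization $\mathrm{H}^h_Z(R)_{\mathfrak{p}}$ would be finitely generated over $R_{\mathfrak{p}}$.

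Next, I would apply lemma \ref{lem1} with $M = R$ to identify $\mathrm{H}^h_Z(R)_{\mathfrak{p}} \simeq \mathrm{H}^h_{\mathfrak{p}R_{\mathfrak{p}}}(R_{\mathfrak{p}})$. Now $(R_{\mathfrak{p}}, \mathfrak{p}R_{\mathfrak{p}})$ is a Noetherian local ring, and since $\mathrm{height}(\mathfrak{p}) = h$ we have $\dim R_{\mathfrak{p}} = h \geqslant 1$. By proposition \ref{prop4} applied to the module $R_{\mathfrak{p}}$ over itself (which is finitely generated, non-zero, of dimension $h$), the module $\mathrm{H}^h_{\mathfrak{p}R_{\mathfrak{p}}}(R_{\mathfrak{p}})$ is infinitely generated over $R_{\mathfrak{p}}$. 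This contradicts finite generation of $\mathrm{H}^h_Z(R)_{\mathfrak{p}}$, completing the argument.

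I do not expect any real obstacle here: this corollary is essentially a globalization-by-localization packaging of proposition \ref{prop4} via lemma \ref{lem1}, and indeed the corollary stated just before (for a single prime $\mathfrak{p}$) is proved by exactly this strategy. The only point requiring a moment's care is confirming that a height-realizing prime $\mathfrak{p}$ with $V(\mathfrak{p}) \subseteq Z$ is genuinely a minimal prime of $Z$ so that lemma \ref{lem1} applies — but this is immediate from the definitions of $\mathrm{height}(Z)$ and of minimal prime given in section 3.
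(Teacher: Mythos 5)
Your proposal is correct and matches the paper's own proof essentially step for step: choose a minimal prime $\mathfrak{p}$ of $Z$, localize at $\mathfrak{p}$, apply lemma \ref{lem1} to identify $\mathrm{H}^h_Z(R)_{\mathfrak{p}}$ with $\mathrm{H}^h_{\mathfrak{p}R_{\mathfrak{p}}}(R_{\mathfrak{p}})$, and invoke proposition \ref{prop4} to derive a contradiction with finite generation. There is no meaningful difference in strategy.
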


\begin{proof}
Towards a contradiction, suppose that $\text{H}_{Z}^h(R)$ was finitely generated as an 
$R$-module. Let $\mathfrak{p}$ be a minimal prime of $Z$. Then $\text{H}_{Z}^h(R)_{\mathfrak{p}}$ 
would be finitely generated as
an $R_{\mathfrak{p}}$-module. $(\mathfrak{p}R, \mathfrak{p})$ is a Noetherian, local ring
with $\text{dim}R_\mathfrak{p} = h \geqslant 1$ and by lemma \ref{lem1}
\[ \text{H}_{Z}^h(R)_{\mathfrak{p}} = 
\text{H}_{\mathfrak{p}R_{\mathfrak{p}}}^h(R_{\mathfrak{p}}) .\]
This is infinitely generated by proposition \ref{prop4}. This gives us our contradiction.
\end{proof}


\vspace{0.1cm}

\end{document}